\newcommand\del[1]{}
\newtheorem{theorem}{Theorem}[section]
\newtheorem{corollary}[theorem]{Corollary}
\newtheorem{proposition}[theorem]{Proposition}
\theoremstyle{definition}
\newtheorem{definition}[theorem]{Definition}
\newtheorem{lemma}[theorem]{Lemma}
\newtheorem{question}[theorem]{Question}
\newtheorem{example}[theorem]{Example}
\newtheorem{remark}[theorem]{Remark}
\numberwithin{equation}{section}
\newcommand\N{{\mathbb{N}}}
\newcommand\R{{\mathbb{R}}}
\newcommand\pd{{\partial}}
\renewcommand\L{{\mathcal{L}}}
\newcommand{\hk}{\mathbin{\! \hbox{\vrule height0.3pt width5pt depth 0.2pt \vrule height5pt width0.4pt depth 0.2pt}}}
\newcommand\Rho{\mathcal{P}}
\newcommand\g{\mathfrak{g}}
\newcommand\X{\mathfrak{X}}
\begin{document}


\title{N$\ddot{\text{o}}$ether's Theorem}
\author{Jon Herman}
\begin{center}
\setlength{\parindent}{0pt}
\setlength{\parskip}{17pt}

\vspace*{10pt}
\del{
{\Large \bf{Existence and Uniqueness of Weak moment Maps in Multisymplectic Geometry}\rm\\ 
\vspace{0.2cm}
\del{
\Large{by}
}

\vspace{0.2cm}

\Large{Jonathan Herman}}
}

{\Large \bf{Existence and Uniqueness of Weak Homotopy Moment Maps}\rm\\ 
\vspace{0.6cm}
\del{
\Large{by}
\vspace{-0.2cm}
}
\Large{Jonathan Herman}}
\\

{\it Department of Pure Mathematics, University of Waterloo}
\\

\tt{j3herman@uwaterloo.ca}

\vspace*{10pt}

\del{ 
{\small A research paper\\  
\vspace{0.1cm}
presented to the University of Waterloo\\
\vspace{0.1cm}in fulfilment of the\\
\vspace{0.1cm}research paper requirement for the degree of\\
\vspace{0.1cm}Master of Mathematics\\
\vspace{0.1cm}in\\
\vspace{0.1cm}Pure Mathematics

\vspace*{\fill}

Waterloo, Ontario, Canada, 2014

\copyright \  Jonathan Herman, 2014
}

}
\end{center}

 \del{

\cleardoublepage

 \pagenumbering{roman}
 \setcounter{page}{2}
 
\vspace*{0.5in}
\noindent
{   \setlength{\parindent}{0pt}
   \setlength{\parskip}{24pt}
   \setlength{\textwidth}{7in}

   {\sffamily\bfseries \index{copyright!author's declaration}
   AUTHOR'S DECLARATION}

   I hereby declare that I am the sole author of this research paper.  This is a true
   copy of the research paper, including any required final revisions, as accepted by
   my examiners.

   I understand that my research paper may be made electronically available to the
   public. 
   
   \vspace*{1.0in}
   
Jonathan Herman
}
}


\begin{abstract}

We show that the classical results on the existence and uniqueness of moment maps in symplectic geometry generalize directly to weak homotopy moment maps in multisymplectic geometry. In particular, we show that their existence and uniqueness is governed by a Lie algebra cohomology complex which reduces to the Chevalley-Eilenberg complex in the symplectic setup.  

\del{
We refine the definition of a homotopy moment map to generalize two concepts from Hamiltonian mechanics. In our setup, a Hamiltonian $(n-1)$-form is now preserved by its Hamiltonian vector field. Moreover, with our definition, we get that every element in the image of a moment map is a conserved quantity, as in Hamiltonian mechanics. We give results on the existence and uniqueness of these refined moment maps. In particular, we show that to each moment map there is a well defined cohomology class in the Lie algebra cohomology, analogous to the setup in symplectic geometry. 
}

\end{abstract}

\del{
\onehalfspacing
\cleardoublepage
\section*{Acknowledgements}
First and foremost, a very sincere thank you goes to my supervisor Dr. Spiro Karigiannis. I am extremely grateful for his patience and the tremendous amount of time he spent teaching and helping me this summer. I would also like to thank Dr. Shengda Hu, my second reader, for the very useful comments and corrections that he provided. I need to acknowledge two of my good friends; Janis Lazovksis and Cameron Williams. Janis is a LaTeX machine, and I am very appreciative of the time he spent helping me this summer. Cam was a great support this year, always there to help me work through any problem. I also would like to thank my family for their constant love and interest, it means very much to me. Last, but not least (I'd say least goes to Cameron Williams), a thank you goes to my cousin Matt Rappoport, for without our discussions some of the contents in this paper would not exist.
}


\del{

\cleardoublepage
\vspace*{70pt}
\begin{center}
\itshape OPTIONAL DEDICATION CAN GO HERE.
\end{center}

}

\tableofcontents

\pagenumbering{arabic}
\setcounter{page}{2}

\section{Introduction}

Recall that for a symplectic manifold $(M,\omega)$, a Lie algebra $\g$ is said to act symplectically if $\L_{V_\xi}\omega=0$, for all $\xi\in\g$, where $V_\xi$ is its infinitesimal generator. A symplectic group action is called Hamiltonian if one can find a moment map, that is, a map $f:\g\to C^\infty(M)$ satisfying \[df(\xi)=V_\xi\hk\omega,\] for all $\xi\in\g$.
\\

In multisymplectic geometry, $\omega$ is replaced by a closed, non-degenerate $(n+1)$-form, where $n\geq 1$. A Lie algebra action is called multisymplectic if $\L_{V_\xi}\omega=0$ for each $\xi\in\g$. A generalization of moment maps from symplectic to multisymplectic geometry is given by a (homotopy) moment map. These maps are discussed in detail in \cite{questions}. A homotopy moment map is a collection of maps, $f_k:\Lambda^k\g\to \Omega^{n-k}(M)$, with $1\leq k \leq n+1$, satisfying \begin{equation}\label{equation1}df_k(p)=-f_{k-1}(\partial_k(p))+(-1)^{\frac{k(k+1)}{2}}V_p\hk\omega,\end{equation}for all $p\in\Lambda^k\g$, where $V_p$ is its infinitesimal generator (see Definition \ref{inf gener}). A weak (homotopy) moment map is a collection of maps $f_k:\Rho_{\g,k}\to\Omega^{n-k}(M)$ satisfying  \begin{equation}\label{equation 2}df_k(p)=(-1)^{\frac{k(k+1)}{2}}V_p\hk\omega,\end{equation} for $p\in\Rho_{\g,k}$. Here $\Rho_{\g,k}$ is the Lie kernel, which is the kernel of the $k$-th Lie algebra cohomology differential $\partial_k:\Lambda^k\g\to\Lambda^{k-1}\g$,  defined by
\[\partial_k:\Lambda^k\g\to\Lambda^{k-1}\g \ \ \ \ \ \ \ \ \xi_1\wedge\cdots\wedge\xi_k\mapsto\sum_{1\leq i<j\leq k}(-1)^{i+j}[\xi_i,\xi_k]\xi_1\wedge\cdots\wedge\widehat\xi_i\wedge\cdots\wedge\widehat\xi_j\wedge\cdots\wedge\xi_k,\] for $k\geq 1$ and $\xi_1,\cdots,\xi_k\in \g$.

We see that any collection of functions satisfying equation (\ref{equation1}) must also satisfy (\ref{equation 2}). That is, any homotopy moment map induces a weak homotopy moment map.

 Weak moment maps generalize the moment maps of Madsen and Swann in \cite{ms} and \cite{MS}, and were also used to give a  multisymplectic version of Noether's theorem in \cite{me}. In this paper, we study the existence and uniqueness of weak homotopy moment maps and show that the theory is a generalization from symplectic geometry. We also show that the equivariance of a weak moment map can be characterized in terms of $\g$-module morphisms, analogous to symplectic geometry.
 
Recall that in symplectic geometry we have the following well-known results on the existence and uniqueness of moment maps. 

 \begin{proposition} Consider the symplectic action of a connected Lie group $G$ acting on a symplectic manifold $(M,\omega)$.
 
 \begin{itemize}
 \item  If the first Lie algebra cohomology vanishes, i.e. $H^1(\g)=0$, then a not necessarily equivariant moment map exists.
 
 \item If the second Lie algebra cohomology vanishes, i.e. $H^2(\g)=0$, then any non-equivariant moment map can be made equivariant.

\item  If the first Lie algebra cohomology vanishes, i.e. $H^1(\g)=0$, then equivariant moment maps are unique,
\end{itemize}

 and combining these results,
 
\begin{itemize}

\item  If both the first and second Lie algebra cohomology vanish, i.e. $H^1(\g)=0$ and $H^2(\g)=0$, then there exists a unique equivariant moment map.

\end{itemize}

\end{proposition}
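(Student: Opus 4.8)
The plan is to read the statement off the three preceding bullets: existence of an \emph{equivariant} moment map from the first two, and uniqueness from the third. Since $H^1(\g)=0$, the first bullet produces a moment map $f\colon\g\to C^\infty(M)$ that is not assumed equivariant; since $H^2(\g)=0$, the second bullet corrects $f$ to an equivariant moment map; and since $H^1(\g)=0$, the third bullet forces any two equivariant moment maps to agree. So at the level of combining the bullets there is nothing to do, and the content lies in recalling why each holds and in checking that the correction step preserves the defining relation $df(\xi)=V_\xi\hk\omega$.

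For existence I would note that $\L_{V_\xi}\omega=0$ and $d\omega=0$ make each $V_\xi\hk\omega$ closed, while Cartan's formula gives $V_{[\xi,\eta]}\hk\omega=\pm\,d\big(\omega(V_\eta,V_\xi)\big)$; hence $\xi\mapsto[V_\xi\hk\omega]\in H^1_{\mathrm{dR}}(M)$ annihilates $[\g,\g]$ and factors through $\g/[\g,\g]$. As $H^1(\g)=0$ forces $\g=[\g,\g]$, every $V_\xi\hk\omega$ is exact, and choosing primitives on a basis and extending linearly yields $f$. To reach equivariance I would form $\sigma(\xi,\eta)=f([\xi,\eta])-\{f(\xi),f(\eta)\}$; one checks $d\sigma(\xi,\eta)=0$, so $\sigma$ is constant-valued, i.e. $\sigma\in\Lambda^2\g^*$, and the Jacobi identity makes it a Chevalley--Eilenberg $2$-cocycle (for the coboundary $\delta$ dual to the $\partial_2$ of the introduction). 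Since $H^2(\g)=0$ we may write $\sigma=\delta\tau$ with $\tau\in\g^*$ and replace $f$ by $f-\tau$, reading $\tau(\xi)$ as a constant function; constants are $d$-closed, so $f-\tau$ is still a moment map and now its obstruction vanishes. For uniqueness, if $f'$ and $f''$ are equivariant moment maps then $\phi:=f'-f''$ has $d\phi(\xi)=0$, hence is constant-valued, and because Poisson brackets annihilate constants, equivariance forces $\phi([\xi,\eta])=0$; thus $\phi$ is a $1$-cocycle and vanishes by $H^1(\g)=0$.

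The main obstacle is the equivariance step, not the bookkeeping: one must verify that $\sigma$ is genuinely real-valued and $\delta$-closed, and that subtracting its primitive does not disturb $df(\xi)=V_\xi\hk\omega$. It is also worth flagging where connectedness enters --- $M$ connected to promote ``$d(\cdot)=0$'' to ``constant'' in both the equivariance and uniqueness arguments, and $G$ connected to identify infinitesimal $\g$-equivariance with $G$-equivariance. Granting these, the bullets chain together to give a unique equivariant moment map.
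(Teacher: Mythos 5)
Your proposal is correct and follows essentially the same route as the paper: existence via the Cartan-formula identity $V_{[\xi,\eta]}\hk\omega=\pm\,d(V_\xi\hk V_\eta\hk\omega)$ combined with $H^1(\g)=0\Leftrightarrow\g=[\g,\g]$, equivariance by showing the obstruction $\Sigma(\xi,\eta)=f([\xi,\eta])-\{f(\xi),f(\eta)\}$ is a constant-valued Chevalley--Eilenberg $2$-cocycle and subtracting a primitive when $H^2(\g)=0$, and uniqueness from the fact that the difference of two equivariant moment maps is constant and annihilates brackets. This is exactly the paper's argument, obtained by setting $n=k=1$ in its general theorems on weak moment maps, so there is nothing to add beyond your (welcome) explicit flagging of where connectedness of $M$ and $G$ is used.
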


 We generalize these results with the following theorems. Letting $\Omega^{n-k}_\mathrm{cl}$ denote the set of closed $(n-k)$-forms on $M$, we get the above propositions, in their respective order, by taking $n=k=1$.

\begin{theorem}
If $H^0(\g,\Rho_{\g,k}^\ast)=0$, then there exists a not necessarily equivariant weak homotopy $k$-moment map. The same result holds if $H^0(\g,\Rho_{\g,k}^\ast\otimes\Omega^{n-k}_{\mathrm{cl}})=0$ and $H^0(\g,\Omega^{n-k}_\mathrm{cl})\not=0$.
\end{theorem}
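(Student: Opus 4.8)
The plan is to read the defining equation \eqref{equation 2} as a lifting problem across the de Rham differential and to show that the only obstruction is a single cohomology class which the hypothesis forces to vanish. Throughout, set
$\alpha(p) := (-1)^{\frac{k(k+1)}{2}} V_p \hk \omega$ for $p \in \Rho_{\g,k}$; this is a linear map $\Rho_{\g,k} \to \Omega^{n-k+1}(M)$, and producing a (not necessarily equivariant) weak homotopy $k$-moment map is exactly the problem of finding a linear map $f_k \colon \Rho_{\g,k} \to \Omega^{n-k}(M)$ with $d\circ f_k = \alpha$. Two preliminary facts are needed. First, each $\alpha(p)$ is closed: applying $d$ and using $d\omega=0$ together with $\L_{V_\xi}\omega=0$ gives the identity $d(V_p \hk \omega) = \pm\, V_{\partial_k p}\hk\omega$ (the same computation that produces the operator $\partial_k$ in \eqref{equation1}), and this vanishes since $p \in \ker\partial_k = \Rho_{\g,k}$. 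Second, $\alpha$ is $\g$-equivariant: from $\L_{V_\xi}(V_p\hk\omega) = (\L_{V_\xi}V_p)\hk\omega + V_p\hk\L_{V_\xi}\omega = V_{\xi\cdot p}\hk\omega$ one obtains $\alpha(\xi\cdot p) = \L_{V_\xi}\alpha(p)$, where $\g$ acts on $\Rho_{\g,k}\subseteq\Lambda^k\g$ through the adjoint representation.

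The key step is to package the obstruction. Since $d\colon\Omega^{n-k}(M)\to\Omega^{n-k+1}_{\mathrm{cl}}(M)$ has image precisely the exact forms, a linear lift $f_k$ exists if and only if $\alpha(p)$ is exact for every $p$, i.e. if and only if the induced linear map $[\alpha]\colon\Rho_{\g,k}\to H^{n-k+1}_{\mathrm{dR}}(M)$, $p\mapsto[\alpha(p)]$, is zero. Now $\g$ acts trivially on de Rham cohomology: since $\alpha(p)$ is closed, $\L_{V_\xi}\alpha(p)=d\bigl(V_\xi\hk\alpha(p)\bigr)$ is exact, so $[\alpha]$ annihilates $\g\cdot\Rho_{\g,k}$. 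Hence $[\alpha]$ is a $\g$-module morphism into a trivial module, that is, an element of $\bigl(\Rho_{\g,k}^\ast\otimes H^{n-k+1}_{\mathrm{dR}}(M)\bigr)^\g = H^0(\g,\Rho_{\g,k}^\ast)\otimes H^{n-k+1}_{\mathrm{dR}}(M)$. If $H^0(\g,\Rho_{\g,k}^\ast)=0$ this forces $[\alpha]=0$, so every $\alpha(p)$ is exact; choosing a primitive for $\alpha$ on each element of a basis of $\Rho_{\g,k}$ and extending linearly produces the required $f_k$.

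For the second hypothesis I would simply show that it implies the first. Fix a nonzero invariant $\theta\in H^0(\g,\Omega^{n-k}_{\mathrm{cl}})$ and take any $\psi\in H^0(\g,\Rho_{\g,k}^\ast)$. Because $\xi\cdot(\psi\otimes\theta)=(\xi\cdot\psi)\otimes\theta+\psi\otimes(\xi\cdot\theta)=0$, the element $\psi\otimes\theta$ is $\g$-invariant and hence lies in $H^0(\g,\Rho_{\g,k}^\ast\otimes\Omega^{n-k}_{\mathrm{cl}})=0$. Evaluating $\psi\otimes\theta=0$ against a linear functional that is nonzero on $\theta$ forces $\psi=0$, whence $H^0(\g,\Rho_{\g,k}^\ast)=0$ and the first part applies verbatim.

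The main obstacle here is conceptual rather than computational: one must recognize that the de Rham obstruction $[\alpha]$ is not merely a linear map but a morphism into the \emph{trivial} $\g$-module, so that its vanishing is controlled exactly by the degree-zero Lie algebra cohomology $H^0(\g,\Rho_{\g,k}^\ast)$. The closedness identity is the only genuinely multisymplectic input, and it should be quoted from the computation underlying \eqref{equation1}; everything else is the representation-theoretic bookkeeping sketched above.
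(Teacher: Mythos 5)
Your proposal is correct and is essentially the paper's own argument in different packaging: your Cartan-formula primitive $V_\xi\hk(V_p\hk\omega)$ on bracket elements $[\xi,p]$ is precisely Proposition \ref{existence 1}, and your observation that the de Rham obstruction class annihilates $\g\cdot\Rho_{\g,k}$ and therefore lies in $H^0(\g,\Rho_{\g,k}^\ast)\otimes H^{n-k+1}_{\mathrm{dR}}(M)=0$ is the dual formulation of Proposition \ref{kernel equals bracket}, which identifies $H^0(\g,\Rho_{\g,k}^\ast)$ with the annihilator $[\Rho_{\g,k},\g]^0$ so that its vanishing makes the brackets span $\Rho_{\g,k}$. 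The only notable difference is in reducing the second hypothesis to the first: the paper cites the K\"unneth formula, whereas your elementary tensor argument ($\psi\otimes\theta$ is invariant, hence zero, hence $\psi=0$) is self-contained and uses nothing beyond the Leibniz rule for the tensor-product action.
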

 
\begin{theorem}
If $H^1(\g,\Rho_{\g,k}^\ast\otimes\Omega^{n-k}_{\text{cl}})=0$, then any non-equivariant weak homotopy $k$-moment map can be made equivariant.
\end{theorem}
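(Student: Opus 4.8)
The plan is to read a weak homotopy $k$-moment map $f_k$ as a $0$-cochain in the Chevalley--Eilenberg complex $C^\bullet(\g,W)$ with coefficients in the $\g$-module $W:=\mathrm{Hom}(\Rho_{\g,k},\Omega^{n-k}(M))\cong\Rho_{\g,k}^\ast\otimes\Omega^{n-k}(M)$, where $\g$ acts on the Lie kernel by the restricted adjoint action $\xi\cdot p$ and on forms by the Lie derivative $\L_{V_\xi}$. Two structural facts make this legitimate and I would record them first: that $\Rho_{\g,k}=\ker\partial_k$ is a $\g$-submodule of $\Lambda^k\g$, which holds because $\partial_k$ is $\g$-equivariant; and that $\Omega^{n-k}_{\mathrm{cl}}$ is a $\g$-submodule of $\Omega^{n-k}(M)$, which holds because $\L_{V_\xi}$ commutes with $d$ and hence preserves closed forms. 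The non-equivariance of $f_k$ is then measured by the $1$-cochain $\sigma:=\delta f_k$, explicitly $\sigma(\xi)(p)=\L_{V_\xi}(f_k(p))-f_k(\xi\cdot p)$, and $f_k$ is equivariant exactly when $\sigma=0$.

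The key step, and the one I expect to be the main obstacle, is to show that $\sigma$ takes values in closed forms, i.e. that $\sigma\in C^1(\g,\Rho_{\g,k}^\ast\otimes\Omega^{n-k}_{\mathrm{cl}})$. To see this I would compute $d\sigma(\xi)(p)$ using $d\L_{V_\xi}=\L_{V_\xi}d$ together with the defining equation~\eqref{equation 2}, which reduces it to $(-1)^{k(k+1)/2}\big(\L_{V_\xi}(V_p\hk\omega)-V_{\xi\cdot p}\hk\omega\big)$. Expanding $\L_{V_\xi}(V_p\hk\omega)=(\L_{V_\xi}V_p)\hk\omega+V_p\hk\L_{V_\xi}\omega$ and invoking the multisymplectic hypothesis $\L_{V_\xi}\omega=0$ leaves $(\L_{V_\xi}V_p)\hk\omega$. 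The crux is then the compatibility $\L_{V_\xi}V_p=V_{\xi\cdot p}$, that is, the $\g$-equivariance of the assignment $p\mapsto V_p$; this follows from $\xi\mapsto V_\xi$ being a Lie algebra morphism together with the definition of $V_p$ on decomposables, and it yields $d\sigma(\xi)(p)=0$.

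Once $\sigma$ is known to be closed-form valued, the remainder is formal. Since $\sigma=\delta f_k$ and $\delta^2=0$, the cochain $\sigma$ is a $1$-cocycle, and by the previous paragraph it lies in the subcomplex $C^\bullet(\g,\Rho_{\g,k}^\ast\otimes\Omega^{n-k}_{\mathrm{cl}})$, on which the differential is the restriction. The hypothesis $H^1(\g,\Rho_{\g,k}^\ast\otimes\Omega^{n-k}_{\mathrm{cl}})=0$ then produces a $0$-cochain $\tau\in\mathrm{Hom}(\Rho_{\g,k},\Omega^{n-k}_{\mathrm{cl}})$ with $\sigma=\delta\tau$, i.e. $\L_{V_\xi}(\tau(p))-\tau(\xi\cdot p)=\sigma(\xi)(p)$ for all $\xi$ and $p$. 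I would finish by setting $f_k':=f_k-\tau$: because each $\tau(p)$ is closed, $df_k'(p)=df_k(p)$, so $f_k'$ is still a weak homotopy $k$-moment map, while $\delta f_k'=\delta f_k-\delta\tau=\sigma-\sigma=0$ shows $f_k'$ is equivariant. The only genuinely nontrivial ingredient is the closedness computation together with the generator-equivariance identity it rests on; everything afterward is the standard cohomological correction argument, and it is precisely this mechanism that forces the coefficient module to be $\Omega^{n-k}_{\mathrm{cl}}$ rather than all of $\Omega^{n-k}(M)$.
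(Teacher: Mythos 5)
Your proposal is correct, and at its core it is the paper's argument: the failure of equivariance is packaged as a Lie algebra $1$-cocycle with coefficients in $\Rho_{\g,k}^\ast\otimes\Omega^{n-k}_{\mathrm{cl}}$, the hypothesis $H^1(\g,\Rho_{\g,k}^\ast\otimes\Omega^{n-k}_{\mathrm{cl}})=0$ provides a primitive $\tau$ valued in closed forms, and correcting $f_k$ by $\tau$ preserves the moment map equation while killing the cocycle — your last step is exactly the paper's Proposition \ref{exact Sigma}. Where you genuinely diverge is in how the cocycle condition is established. The paper defines $\Sigma_k$ by differentiating the group-level defect $\psi^k_{g,p}$ (Propositions \ref{general closed} and \ref{comp of Sigma}) and then verifies $\delta\Sigma_k=0$ by a long direct computation using the Jacobi identity and the commutator property of Lie derivatives (Proposition \ref{general inf}). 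You instead note that the defect is literally $\delta f_k$ for $f_k$ viewed as a $0$-cochain in the larger complex $C^\bullet(\g,\Rho_{\g,k}^\ast\otimes\Omega^{n-k}(M))$, so the cocycle identity comes for free from $\delta^2=0$, and the only real work is your closedness computation showing $\delta f_k$ lands in the subcomplex with closed-form coefficients (the infinitesimal counterpart of Proposition \ref{general closed}). This is a tidier route: it eliminates the Jacobi-identity computation, works entirely at the Lie algebra level without passing through the group cocycle $\sigma_k$, and makes transparent why the coefficients must be $\Omega^{n-k}_{\mathrm{cl}}$. One caveat to record when writing it up: your crux identity $\L_{V_\xi}V_p=V_{\xi\cdot p}$ is convention-sensitive, since for a left action the generator map is an anti-homomorphism, $[V_\xi,V_\eta]=-V_{[\xi,\eta]}$; accordingly the action on the Lie kernel should be taken as $\xi\cdot p=[p,\xi]$ (as the paper does in its $\g$-module discussion) so that the identity holds on the nose. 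This is a matter of fixing signs consistently, not a gap, and with that convention every step of your argument is sound.
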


\begin{theorem}
If $H^0(\g,\Rho_{\g,k}^\ast\otimes\Omega^{n-k}_{\text{cl}})=0$ then an equivariant weak homotopy $k$-moment map is unique,
\end{theorem}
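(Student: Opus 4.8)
The plan is to mirror the symplectic argument ($n=k=1$): two equivariant weak moment maps can differ only by a degree-zero Lie algebra cocycle valued in $\Rho_{\g,k}^\ast\otimes\Omega^{n-k}_{\mathrm{cl}}$, and the hypothesis kills every such cocycle. So I would start by supposing $f_k$ and $f_k'$ are both equivariant weak homotopy $k$-moment maps and setting $g_k:=f_k-f_k'$, a linear map $\Rho_{\g,k}\to\Omega^{n-k}(M)$. The goal is to show $g_k=0$.

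The first step is to see that $g_k$ takes values in the \emph{closed} forms. Both $f_k$ and $f_k'$ satisfy (\ref{equation 2}) with the identical right-hand side $(-1)^{\frac{k(k+1)}{2}}V_p\hk\omega$, so for every $p\in\Rho_{\g,k}$ we get $dg_k(p)=df_k(p)-df_k'(p)=0$. Hence $g_k(p)\in\Omega^{n-k}_{\mathrm{cl}}$ for all $p$, and we may regard $g_k$ as an element of $\mathrm{Hom}(\Rho_{\g,k},\Omega^{n-k}_{\mathrm{cl}})=\Rho_{\g,k}^\ast\otimes\Omega^{n-k}_{\mathrm{cl}}$, which is exactly the coefficient module appearing in the statement.

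The second step is to translate equivariance of the two maps into $\g$-invariance of their difference. By the equivariance convention, each $f_k$ intertwines the $\g$-action on $\Rho_{\g,k}$ with the Lie-derivative action on forms, i.e. $f_k(\xi\cdot p)=\L_{V_\xi}f_k(p)$ for all $\xi\in\g$, and likewise for $f_k'$; subtracting gives $\L_{V_\xi}g_k(p)=g_k(\xi\cdot p)$. Recalling that the $\g$-action on $\Rho_{\g,k}^\ast\otimes\Omega^{n-k}_{\mathrm{cl}}$ is $(\xi\cdot g_k)(p)=\L_{V_\xi}\bigl(g_k(p)\bigr)-g_k(\xi\cdot p)$, this is precisely the assertion that $\xi\cdot g_k=0$ for every $\xi$, i.e. that $g_k$ is a degree-zero cocycle in the Chevalley--Eilenberg complex. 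Thus $g_k\in H^0(\g,\Rho_{\g,k}^\ast\otimes\Omega^{n-k}_{\mathrm{cl}})$, which vanishes by hypothesis, forcing $g_k=0$ and $f_k=f_k'$.

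The closedness computation and the $\mathrm{Hom}$--tensor identification are routine. The one place I expect the real content to sit is the verification that the $\g$-module structure on $\Rho_{\g,k}^\ast\otimes\Omega^{n-k}_{\mathrm{cl}}$ is arranged so that ``$g_k$ is a $\g$-module morphism into closed forms'' coincides \emph{exactly} with ``$g_k$ is a $0$-cocycle,'' i.e. that $H^0$ of this complex is literally the space of equivariant closed-form-valued maps. This hinges on pinning down the equivariance convention and on checking that the action on $\Rho_{\g,k}$ is the correct restriction of the action on $\Lambda^k\g$, so that the Lie kernel is genuinely a $\g$-submodule and the dual action is well defined. Once that identification is secured, the vanishing hypothesis delivers uniqueness immediately.
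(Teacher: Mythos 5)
Your proposal is correct and takes essentially the same route as the paper: the paper's Proposition \ref{uniqueness in H^0} shows exactly as you do that the difference of two equivariant weak $k$-moment maps is closed-form valued and $\g$-invariant, hence a $0$-cocycle in $H^0(\g,\Rho_{\g,k}^\ast\otimes\Omega^{n-k}_{\mathrm{cl}})$, which the hypothesis forces to vanish. The point you flag as the ``real content''---that $H^0$ of this complex is literally the space of invariant closed-form-valued maps under the convention $(\xi\cdot\alpha)(p)=\alpha([\xi,p])+\L_{V_\xi}\alpha(p)$---is precisely how the paper sets up its coefficient module before Proposition \ref{general inf}, so the two arguments coincide.
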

and combining these results,

\begin{theorem}
If $H^0(\g,\Rho_{\g,k}^\ast)=0$, and $H^1(\g,\Rho_{\g,k}^\ast\otimes\Omega^{n-k}_{\text{cl}})=0$, then there exists a unique equivariant weak $k$-moment map $f_k:\Rho_{\g,k}\to\Omega^{n-k}$. Moreover, if $H^0(\g,\Rho_{\g,k}^\ast)=0$,  and $H^1(\g,\Rho_{\g,k}^\ast\otimes\Omega^{n-k}_{\text{cl}})=0$ for all $1\leq k\leq n$, then a full equivariant weak moment map exists and is unique.
\end{theorem}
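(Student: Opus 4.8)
The plan is to assemble the statement from the three preceding theorems, first in a fixed degree $k$ and then for the full moment map. First I would apply the existence theorem: since $H^0(\g,\Rho_{\g,k}^\ast)=0$, there exists a weak homotopy $k$-moment map $f_k\colon\Rho_{\g,k}\to\Omega^{n-k}$, a priori not equivariant. Next, since $H^1(\g,\Rho_{\g,k}^\ast\otimes\Omega^{n-k}_{\mathrm{cl}})=0$, the equivariance theorem lets me correct $f_k$ by a closed-form-valued term to obtain an \emph{equivariant} weak $k$-moment map. This establishes the existence half.

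For uniqueness, suppose $f_k$ and $f_k'$ are two equivariant weak $k$-moment maps. Both satisfy \eqref{equation 2}, so their difference $\phi=f_k-f_k'$ is valued in closed forms and is a morphism of $\g$-modules; equivalently, $\phi$ defines a class in $H^0(\g,\Rho_{\g,k}^\ast\otimes\Omega^{n-k}_{\mathrm{cl}})$. By the uniqueness theorem, $\phi=0$ once this group vanishes. Since the hypotheses supply only $H^0(\g,\Rho_{\g,k}^\ast)=0$ together with $H^1(\g,\Rho_{\g,k}^\ast\otimes\Omega^{n-k}_{\mathrm{cl}})=0$, the crucial step is to deduce the vanishing of $H^0(\g,\Rho_{\g,k}^\ast\otimes\Omega^{n-k}_{\mathrm{cl}})$ from these two. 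I would pursue this through the long exact sequence attached to the short exact sequence of coefficient modules $0\to\Omega^{n-k}_{\mathrm{cl}}\to\Omega^{n-k}\xrightarrow{\,d\,}d\Omega^{n-k}\to0$, tensored with $\Rho_{\g,k}^\ast$, using the vanishing of $H^1(\g,\Rho_{\g,k}^\ast\otimes\Omega^{n-k}_{\mathrm{cl}})$ to control the connecting map while bringing in the existence-type vanishing $H^0(\g,\Rho_{\g,k}^\ast)=0$ to handle the terms involving all of $\Omega^{n-k}$; the degenerate case $k=n$, where $\Omega^0_{\mathrm{cl}}$ reduces to the constants and the two $H^0$ groups literally coincide, is the model I would try to promote to general $k$. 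I expect this reconciliation of the existence-type and uniqueness-type cohomology to be the main obstacle, and it is precisely where the fine structure of the cohomology complex set up earlier must enter.

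Finally, for the full equivariant weak moment map I would note that equation \eqref{equation 2} constrains each component $f_k$ separately: unlike \eqref{equation1}, it carries no coupling term $f_{k-1}(\partial_k(p))$. A full weak moment map is therefore just a tuple $(f_1,\dots,f_n)$ in which each $f_k$ is a weak $k$-moment map, and its existence and uniqueness reduce to those of the individual components. Applying the single-degree argument above under the stated hypotheses for every $1\le k\le n$ then yields the full equivariant weak moment map and its uniqueness, completing the proof.
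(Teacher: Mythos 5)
Your existence argument and the componentwise reduction are correct and coincide with the paper's route: $H^0(\g,\Rho_{\g,k}^\ast)=0$ forces $\Rho_{\g,k}=[\Rho_{\g,k},\g]$ (Proposition \ref{kernel equals bracket}), which combined with Proposition \ref{existence 1} produces a possibly non-equivariant weak $k$-moment map, and $H^1(\g,\Rho_{\g,k}^\ast\otimes\Omega^{n-k}_{\mathrm{cl}})=0$ lets you correct it by a closed-form-valued cochain to an equivariant one (Proposition \ref{exact Sigma}). Likewise, since equation (\ref{hcmm kernel}) couples no two degrees, the full statement is indeed just the conjunction of the degree-$k$ statements, exactly as in the paper.

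The uniqueness half, however, contains a genuine gap, and the bridge you propose would fail. The long exact sequence of $0\to\Omega^{n-k}_{\mathrm{cl}}\to\Omega^{n-k}\xrightarrow{\,d\,}d\Omega^{n-k}\to 0$ tensored with $\Rho_{\g,k}^\ast$ begins
\begin{equation*}
0\to H^0(\g,\Rho_{\g,k}^\ast\otimes\Omega^{n-k}_{\mathrm{cl}})\to H^0(\g,\Rho_{\g,k}^\ast\otimes\Omega^{n-k})\to H^0(\g,\Rho_{\g,k}^\ast\otimes d\Omega^{n-k})\to H^1(\g,\Rho_{\g,k}^\ast\otimes\Omega^{n-k}_{\mathrm{cl}}),
\end{equation*}
so the hypothesis $H^1(\g,\Rho_{\g,k}^\ast\otimes\Omega^{n-k}_{\mathrm{cl}})=0$ only makes the third arrow surjective; to kill $H^0(\g,\Rho_{\g,k}^\ast\otimes\Omega^{n-k}_{\mathrm{cl}})$ you would need $H^0(\g,\Rho_{\g,k}^\ast\otimes\Omega^{n-k})=0$, and $H^0(\g,\Rho_{\g,k}^\ast)=0$ cannot deliver this, because vanishing of invariants does not pass to tensor products: for a nontrivial irreducible module $V$ one has $V^\g=(V^\ast)^\g=0$ while $(V^\ast\otimes V)^\g$ contains the identity tensor. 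Concretely, $H^0(\g,\Rho_{\g,k}^\ast\otimes\Omega^{n-k}_{\mathrm{cl}})$ is the space of $\g$-equivariant maps $\Rho_{\g,k}\to\Omega^{n-k}_{\mathrm{cl}}$, which is nonzero whenever $\Omega^{n-k}_{\mathrm{cl}}$ contains a copy of a quotient of $\Rho_{\g,k}$ as a $\g$-module; your model case $k=n$ works only because exact $0$-forms vanish, and that feature does not promote to $k<n$. You should also know that the paper itself supplies no bridging argument: it states this theorem as a corollary of the three preceding ones, implicitly identifying the hypothesis $H^0(\g,\Rho_{\g,k}^\ast)=0$ with the hypothesis $H^0(\g,\Rho_{\g,k}^\ast\otimes\Omega^{n-k}_{\mathrm{cl}})=0$ of Theorem \ref{dunno2} (this is the ``or equivalently'' following Proposition \ref{theorem open question 1}), but only the direction ``tensor invariants vanish $\Rightarrow$ factor invariants vanish'' is justified there. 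What the stated hypotheses actually yield, via Proposition \ref{uniqueness in H^0} and $\Rho_{\g,k}=[\Rho_{\g,k},\g]$, is that the difference $\phi$ of two equivariant weak $k$-moment maps takes values in exact forms, since $\phi([\xi,q])=\pm\L_{V_\xi}\phi(q)=\pm d(V_\xi\hk\phi(q))$; this forces $\phi=0$ only when $k=n$, recovering the symplectic and Madsen--Swann cases. So you have correctly located the crux, but closing it requires either assuming $H^0(\g,\Rho_{\g,k}^\ast\otimes\Omega^{n-k}_{\mathrm{cl}})=0$ outright, as Theorem \ref{dunno2} does, or an argument not present in the paper.
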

 
 \del{
 is a  If the first Lie algebra cohomology of a Lie group vanishes then we there exists a not necessarily equivariant moment map.  If the second Lie algebra cohomology vanishes, then we can make any non-equivariant moment map  equivariant by adding a cocycle. Moreover, any two equivariant moment maps differ by something in $[\g,\g]^0$ and thus if $H^1(\g)=0$, then any equivariant moment map is unique.

We obtain strong generalizations of these results by showing analogous results are obtained in the setting of multisymplecitc geometry. In order to do this, however, we must consider a refined version of a homotopy moment map. This refining makes two previously non-existing connections with symplectic geometry. In particular, with our new definition, every element in the image of the moment map is a conserved quantity in the sense defined in \cite{cq}. Moreover, we also get that $\L_{X_H}H=0$ for a Hamiltonian vector field, which doesn't happen in the general setting.  

Specifically, in our setup we obtain that if $H^k(\g)=0$ then the $k$th component of a pre homotopy moment map exists. Moreover, if $H^{k+1}(\g)=0$ then any $k$th component of a pre homotopy moment map can be made equivariant. We also show that the difference of two $kth$ components of homotopy moment maps differ by something in $[\Rho_{\g,k},\g]^0$, analogous to the setup of symplectic geometry.
We find that if $H^k(\g)=0$ then the $k$th component of an homotopy moment map is necessarily unique. Here are some theorems that generalize results from symplectic geometry.

\begin{theorem}
If $H^k(\g)=0$ then the $k$-th component of a not necessarily equivariant refined homotopy moment map exists. If there exists an equivariant $k$th component of a refined homotopy moment, then it is unique.
\end{theorem}

\begin{theorem}
If $H^{k+1}(\g)=0$ then any non equivariant restricted homotopy moment map can be made equivariant.
\end{theorem}

\begin{theorem}
For any refined homotopy moment map $(f)$ there exists a well defined cohomology class $[\sigma]$.
\end{theorem}

\begin{theorem}
If $H^1(\g)=\cdots=H^{n+1}(\g)=0$ then a unique equivariant refined homotopy moment map exists.
\end{theorem}

Recall that in symplectic geometry, a moment map gives a Lie algebra morphism. Suppose that $f:g\to\ C^\infty(M)$ is a non-equivariant moment map. Then $f$ will only setup a Lie algebra morphism from $(\g,[\cdot,\cdot])$ to $(C^\infty(M),\{\cdot,\cdot\})$ if $f$ is equivariant. 

In general, an equivariant moment map will give a Lie algebra morphism. However, it is not necessarily true that a moment map which is a Lie algebra morphism is necessarily equivariant.  It is true if the group is compact and connected. A moment map that is not equivariant will not necessarily be a Lie algebra morphism. This is because it is only true that $f([\xi,\eta])-\{f(\xi),f(\eta)\}$ is constant. If the moment map is equivariant then this constant vanishes. However, it is not true that if the constant vanishes then the map is equivariant. Now given 

All the existence and uniqueness theorems hold if the group is compact and connected. If it is not compact or connected, then the definition of equivariance has to be changed to be that is a Lie algebra morphism.

In symplectic geometry, a moment map is always a Lie algebra homomorphism from $\g$ to $\widetilde\Omega$. If the moment map is equivariant, then it is a Lie algebra morphism from $\g$ to $\Omega/exact$. This is the first thing we generalize.

An equivariant moment map always gives a Lie algebra morph from $\g$ to $\Omega/exact$. Since this is the property we are interested, we will say a moment is equivariant if its a Lie alge morphism between these spaces, just like Da Silva does.

If $H^k(\g)=0$ then for a symplectic action a not-necessarily-equivariant moment map exists. Moreover, if $H^k(\g)=0$ then equivariant moment maps are unique.  If $H^{k+1}(\g)=0$ then one can always obtain an equivariant moment from a non-equivariant moment. We generalize each of these theorems.

This work gernealizes the existence and uniqueness results of moment maps in symplectic geometry. It also generalizes the existence and uniquess results of the moment maps introduced by Madsen and Swann in \cite{ms} and \cite{MS}.
}

We also show that the morphism properties of moment maps from symplectic geometry are preserved in multisymplectic geometry. More specifically, recall that in symplectic geometry the equivariance of a moment map $f:\g\to C^\infty(M)$ is characterized by whether or not $f$ is a Lie algebra morphism. That is, $f$ is equivariant if and only if \[f([\xi,\eta])=\{f(\xi),f(\eta)\},\] for all $\xi,\eta\in\g$. However, as shown in Theorem 4.2.8 of \cite{Marsden} it is always true that $f$ induces a Lie algebra morphism between $\g$ and $C^\infty(M)/\text{constant}$, because $df([\xi,\eta])=d\{f(\xi),f(\eta)\}$. 

We generalize these results to multisymplectic geometry by showing that:

\begin{theorem}
For any $1\leq k \leq n$, a weak  $k$-moment map is always a $\g$-module morphism from $\Rho_{\g,k}\to\Omega^{n-k}_{\mathrm{Ham}}(M)/\text{closed}$. A weak $k$-moment map is equivariant if and only if it is a $\g$-module morphism from $\Rho_{\g,k}\to\Omega^{n-k}_{\mathrm{Ham}}(M)$.
\end{theorem}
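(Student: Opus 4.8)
The plan is to make the two $\g$-module structures explicit and then reduce everything to a single Cartan-calculus computation. On the domain side, recall that the adjoint action of $\g$ on $\Lambda^\bullet\g$ (the derivation extension of $\xi\mapsto\mathrm{ad}_\xi$) commutes with the Chevalley--Eilenberg differential $\partial$; hence each Lie kernel $\Rho_{\g,k}=\ker\partial_k$ is a $\g$-submodule of $\Lambda^k\g$. On the codomain side, $\g$ acts on $\Omega^{n-k}(M)$ by $\xi\cdot\alpha=\L_{V_\xi}\alpha$. Because the action is multisymplectic, $\L_{V_\xi}\omega=0$, and this has two consequences: $\L_{V_\xi}$ preserves closed forms (it commutes with $d$) and it preserves Hamiltonian forms (if $d\alpha=\pm\,P\hk\omega$ then $d\L_{V_\xi}\alpha=\pm\,(\L_{V_\xi}P)\hk\omega$). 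Thus $\Omega^{n-k}_{\mathrm{Ham}}(M)$ is a $\g$-submodule and $\Omega^{n-k}_{\mathrm{Ham}}(M)/\text{closed}$ inherits a $\g$-module structure. Note also that the defining equation $df_k(p)=(-1)^{k(k+1)/2}V_p\hk\omega$ already shows $f_k(p)$ is Hamiltonian (with Hamiltonian multivector field $V_p$), so $f_k$ does land in $\Omega^{n-k}_{\mathrm{Ham}}(M)$.

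With these identifications, proving that $f_k$ descends to a $\g$-module morphism into $\Omega^{n-k}_{\mathrm{Ham}}(M)/\text{closed}$ amounts to showing
\[
d\big(f_k(\mathrm{ad}_\xi p)-\L_{V_\xi}f_k(p)\big)=0
\qquad\text{for all }\xi\in\g,\ p\in\Rho_{\g,k}.
\]
Here $\mathrm{ad}_\xi p\in\Rho_{\g,k}$ since the kernel is a submodule, so the first term makes sense. I would expand both terms with the defining equation. For the second, $d\L_{V_\xi}f_k(p)=\L_{V_\xi}df_k(p)=(-1)^{k(k+1)/2}\L_{V_\xi}(V_p\hk\omega)$, and the Leibniz rule together with $\L_{V_\xi}\omega=0$ gives $\L_{V_\xi}(V_p\hk\omega)=(\L_{V_\xi}V_p)\hk\omega$. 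The key identity is then $\L_{V_\xi}V_p=V_{\mathrm{ad}_\xi p}$: writing $V_p=V_{\xi_1}\wedge\cdots\wedge V_{\xi_k}$ and using that $\xi\mapsto V_\xi$ is a Lie algebra morphism ($[V_\xi,V_{\xi_i}]=V_{[\xi,\xi_i]}$) together with the derivation property of the Lie derivative on multivector fields, one obtains $\L_{V_\xi}(V_{\xi_1}\wedge\cdots\wedge V_{\xi_k})=\sum_i V_{\xi_1}\wedge\cdots\wedge V_{[\xi,\xi_i]}\wedge\cdots\wedge V_{\xi_k}=V_{\mathrm{ad}_\xi p}$. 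Feeding this back gives $d\L_{V_\xi}f_k(p)=(-1)^{k(k+1)/2}V_{\mathrm{ad}_\xi p}\hk\omega=df_k(\mathrm{ad}_\xi p)$, which is exactly the desired equality. The resulting identity $\overline{f_k(\mathrm{ad}_\xi p)}=\overline{\L_{V_\xi}f_k(p)}=\xi\cdot\overline{f_k(p)}$ in the quotient is precisely the $\g$-module morphism condition.

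For the equivalence in the second sentence, I would simply unwind the definition of equivariance. Infinitesimally, equivariance of $f_k$ is the statement $f_k(\mathrm{ad}_\xi p)=\L_{V_\xi}f_k(p)$ for all $\xi,p$ (for a connected group this is equivalent to the global $\mathrm{Ad}$-equivariance), and this is verbatim the condition that $f_k$ be a $\g$-module morphism $\Rho_{\g,k}\to\Omega^{n-k}_{\mathrm{Ham}}(M)$ into the un-quotiented target. Both directions are then immediate once the module structures above are in place: the computation of the first part shows the obstruction to equivariance is exactly a closed form, and it vanishes precisely when $f_k$ lifts the quotient morphism to an honest morphism.

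I expect the only real work to be the key identity $\L_{V_\xi}V_p=V_{\mathrm{ad}_\xi p}$ and the accompanying sign and ordering bookkeeping in the multivector contraction $V_p\hk\omega$ --- in particular checking that the generator convention (homomorphism versus anti-homomorphism for $\xi\mapsto V_\xi$) is applied consistently on both sides so that the factors $(-1)^{k(k+1)/2}$ cancel. Conceptually, however, everything collapses once $\L_{V_\xi}\omega=0$ is used, so the main obstacle is bookkeeping rather than any genuine difficulty.
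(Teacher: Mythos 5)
Your proposal is correct and follows essentially the same route as the paper: the first statement is proved by the identical Cartan-calculus computation (defining equation of the weak moment map, $\L_{V_\xi}\omega=0$, and the identity relating $V_{\mathrm{ad}_\xi p}$ to $[V_\xi,V_p]=\L_{V_\xi}V_p$, showing $df_k(\mathrm{ad}_\xi p)=d\L_{V_\xi}f_k(p)$), and the second is handled exactly as in the paper by identifying equivariance of $f_k$ with the vanishing of the infinitesimal cocycle $\Sigma_k(\xi,p)=f_k([\xi,p])\pm\L_{V_\xi}f_k(p)$, which is verbatim the $\g$-module morphism condition into the unquotiented target. The sign/convention bookkeeping you flag (homomorphism versus anti-homomorphism for $\xi\mapsto V_\xi$) is indeed the only delicate point, and the paper itself is loose about it, so your attempt matches both the substance and the level of rigor of the published argument.
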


Here $\Omega^{n-k}_{\mathrm{Ham}}(M)$ is denoting the space of multi-Hamiltonian forms, which are differential forms $\alpha\in\Omega^{n-k}(M)$ satisfying $d\alpha=X_\alpha\hk\omega$ for some $X_\alpha\in\Gamma(\Lambda^k(TM))$ (see Definition \ref{ham k form}). These forms were introduced in \cite{me}, and give a notion of a multi-symmetry which occurs when there is a given Hamiltonian $(n-1)$-form $H\in\Omega^{n-1}_{\mathrm{Ham}}(M)$, (see Definition \ref{ham 1 form}).
\section{Cohomology}
We briefly recall some basic notions from group and Lie algebra cohomology.
\subsection{Group Cohomology}
Let $G$ be a group and $S$ a $G$-module. For $g\in G$ and $s\in S$, let $g\cdot s$ denote the action of $G$ on $S$. Let $C^k(G,S)$ denote the space of smooth alternating functions from $G^k$ to $S$ and consider the differential $\partial_k:C^k(G,S)\to C^{k+1}(G,S)$ defined as follows. For $\sigma\in C^k(G,S)$ and $g_1,\cdots, g_{k+1}\in G$ define \begin{equation}\label{group differential}\begin{aligned}\partial&_k\sigma(g_1,\cdots,g_{k+1}):=\\
&g_1\cdot\sigma(g_2,\cdots,g_{k+1})+\sum_{i=1}^k(-1)^i\sigma(g_1,\cdots, g_{i-1},g_ig_{i+1},g_{i+2},\cdots,g_{k+1})-(-1)^k\sigma(g_1,\cdots,g_k).\end{aligned}\end{equation}A computation shows that $\partial_k^2=0$ so that $C^0(G,S)\to C^1(G,S)\to \cdots$ is a cochain complex.  This cohomology is known as the differentiable cohomology of $G$ with coefficients in $S$. We let $H^k(G,S)$ denote the $k$-th cohomology group and will call an equivalence class representative a $k$-cocycle.

\subsection{Lie Algebra Cohomology}
Let $\g$ be a Lie algebra and $R$ a $\g$-module. Given $\xi\in\g$ and $r\in R$, let $\xi\cdot r$ denote the action of $\g$ on $R$. We let $C^k(\g,R)$ denote the space of multilinear alternating functions from $\g^k$ to $R$ and consider the differential $\delta_k:C^k(\g,R)\to C^{k+1}(\g,R)$ defined as follows. For $f\in C^k(\g,R)$ and $\xi_1,\cdots\xi_{k+1}\in \g$ define

 \begin{equation}\label{group differential 2}\begin{aligned}\delta_k& f(\xi_1,\cdots,\xi_{k+1}):=\\
 &\sum_i(-1)^{i+1}\xi_i\cdot f(\xi_1,\cdots,\widehat\xi_i,\cdots,\xi_{k+1})+\sum_{i<j}(-1)^{i+j}f([\xi_i,\xi_j],\xi_1,\cdots,\widehat\xi_i,\cdots,\widehat\xi_j,\cdots,\xi_{k+1}). \end{aligned}\end{equation}

A computation shows that $\delta_k^2=0$. We let $H^k(\g,R)$ denote the $k$-th cohomology group and call an equivalence class representative a (Lie algebra) $k$-cocycle. Note that for $k=0$ the map $\delta_0:R\to  C^1(\g,R)$ is given by $(\delta_0r)(\xi)=\xi\cdot r$, where $r\in R$ and $\xi\in \g$. For $k=1$ the map $\delta_1:C^1(\g,R)\to C^2(\g,R)$ is given by $\delta_1(f)(\xi_1,\xi_2)=\xi_1\cdot f(\xi_2)-\xi_2\cdot f(\xi_1)-f([\xi_1,\xi_2])$, where $f\in C^1(\g,R)$ and $\xi_1$ and $\xi_2$ are in $\g$.\\

The standard example of Lie algebra cohomology is given when $R=\R$:

\begin{example}\bf{(Exterior algebra of $\g^\ast$) }\rm  Consider the trivial $\g$-action on $\R$. Then $C^k(\g,\R)=\Lambda^k\g^\ast$, and the Lie algebra cohomology differential $\delta_k:\Lambda^k\g^\ast\to\Lambda^{k+1}\g^\ast$ is given by
\begin{equation}\label{differential g dual}\delta_k\alpha(\xi_1\wedge\cdots\wedge\xi_k):= \alpha\left(\sum_{1\leq i<j\leq k}(-1)^{i+j}[\xi_i,\xi_k]\wedge\xi_1\wedge\cdots\wedge\widehat\xi_i\wedge\cdots\wedge\widehat\xi_j\wedge\cdots\wedge\xi_k)\right)\end{equation}where $\alpha\in\Lambda^k\g^\ast$, and $\xi_1\wedge\cdots\wedge\xi_k$ is a decomposable element of $\Lambda^k\g$, and extended by linearity to non-decomposables. It is easy to check that $\delta^2=0$. We will also make frequent reference to the corresponding Lie algebra homology differential which is given by

\begin{equation}\label{differential g}\partial_k:\Lambda^k\g\to\Lambda^{k-1}\g \ \ \ \ \ \ \ \ \xi_1\wedge\cdots\wedge\xi_k\mapsto\sum_{1\leq i<j\leq k}(-1)^{i+j}[\xi_i,\xi_k]\wedge\xi_1\wedge\cdots\wedge\widehat\xi_i\wedge\cdots\wedge\widehat\xi_j\wedge\cdots\wedge\xi_k,\end{equation}for $k\geq 1$. We define $\Lambda^{-1}\g=\{0\}$ and $\partial_0$ to be the zero map. 
\end{example} 

For the rest of this section we only consider the exterior algebra homology complex.

\begin{definition}\label{cohomology differential}
We follow the terminology and notation of \cite{ms} and call $\Rho_{\g,k}=\ker \partial_k$ the $k$-th Lie kernel, which is a vector subspace of $\Lambda^k\g$.  Notice that if $\g$ is abelian then $\Rho_{\g,k}=\Lambda^k\g$. We will let $\Rho_g$ denote the direct sum of all the Lie kernels; \[\Rho_\g=\oplus_{k=0}^{\dim(\g)}\Rho_{\g,k},\] and denote $H^k(\g,\R)$  simply by $H^k(\g)$.
\end{definition}

We now recall the Schouten Bracket.

\begin{definition}
On decomposable multivectors $X=X_1\wedge\cdots \wedge X_k \in\Lambda^k \g$ and $Y=Y_1\wedge\cdots \wedge Y_l\in\Lambda^l\g$,  the Schouten bracket $[\cdot,\cdot]$ is given by \[ [X,Y]:=\sum_{i=1}^k\sum_{j=1}^l(-1)^{i+j}[X_i,Y_j]\wedge X_1\wedge\cdots\wedge \widehat X_i\wedge\cdots\wedge X_k\wedge Y_1\wedge\cdots\wedge\widehat Y_j\wedge\cdots\wedge Y_l,\] and extended by linearity to all multivector fields. \del{The Schouten bracket is also defined on the multivector fields $\Gamma(\Lambda^\bullet(TM))$ by the same formula. We will abuse notation and let $[\cdot,\cdot]$ represent the Schouten bracket on both $\Lambda^\bullet\g$ and $\Gamma(\Lambda^\bullet(TM))$. }
\end{definition}

\del{

Now let a Lie group act on a manifold $M$. Then we get the following induced differential algebra on the exterior powers of vector fields. Let $\mathfrak{X}_k=\{V_p \ ;   p\in\Rho_{\g,k}\}$, where $V_p$ is the infinitesimal generator of $p$. Let $\mathfrak{X}=\oplus \mathfrak{X}_k$.

\begin{proposition}
We have that $(\mathfrak{X},\partial,[\cdot,\cdot])$ is a differential graded Lie algebra, where we define $\partial V_p$ to be $V_{\partial p}$ and $[\cdot,\cdot]$ to be the usual Schouten bracket. \end{proposition}

\begin{proof}
We first show that $V_{[p,q]}=-[V_p,V_q]$. Indeed,\begin{align*}
V_{[p,q]}&=\sum_{i,j}(-1)^{i+j}[X_i,Y_j]_M\wedge (X_1)_M\wedge\cdots \widehat X_i\wedge\cdots\widehat Y_j\wedge\cdots (Y_l)_M\\
&=\sum_{i,j}-(-1)^{i+j}[(X_i)_M,(Y_j)_M]\wedge (X_1)_M\wedge\cdots \widehat X_i\wedge\cdots\widehat Y_j\wedge\cdots (Y_l)_M\\
&=-[V_p,V_q]
\end{align*}
The proposition now follows since $(\Rho_\g,\partial,[\cdot,\cdot])$ is a differential graded Lie algebra.

\end{proof}

Next we recall the definition of the Lie derivative with respect to a multi vector field.
\begin{definition}
For a differential form $\tau\in\Omega^{r}(M)$ and a multi-vector field $Y\in\Gamma(\Lambda^k(TM))$ we define the Lie derivative of $\tau$ along $Y$  by \[\L_Y\tau := d(Y\hk\tau)+(-1)^{k+1}d\tau.\]
\end{definition}

We recall some properties of this Lie derivative without proof. The following lemma holds for arbitrary multi vector fields; however, in this paper we will only need to consider multi vector fields in $\X$.

\begin{lemma}(\bf{Extended Cartan Lemma})\rm
\label{extended Cartan}
For a decomposable multivector field $\xi_1\wedge\cdots\wedge\xi_k$ in $\X_k$, and differential form $\tau\in\Omega^\bullet(M)$ we have that 

\begin{align*}
(-1)^kd(\xi_1\wedge\cdots\wedge\xi_k)\hk\tau&=(\partial(\xi_1\wedge\cdots\wedge\xi_k))\hk\tau +\sum_{i=1}^k(-1)^i(\xi_1\wedge\cdots\wedge\widehat\xi_i\wedge\cdots\wedge\xi_k)\hk\L_{\xi_i}\tau\\
&+(\xi_1\wedge\cdots\wedge\xi_k)\hk d\tau
\end{align*} 

Here we have abused notation and represented the infinitesimal generator of an element in $\Lambda^k\g$ by itself.
\end{lemma}

\begin{proof}

This is Lemma 3.4 of \cite{ms}
\end{proof}

\begin{remark}
In the case where $k=1$ we see that we obtain the standard Cartan formula.
\end{remark}

The next proposition shows how the Schouten bracket distributes over interior products.
\begin{lemma}
\label{interior}For $X\in\Gamma(\Lambda^k(TM))$, $Y\in\Gamma(\Lambda^l(TM))$ we have that the interior product of their Schouten bracket satisfies
 \[i[X,Y]=[-[i(Y),d],i(X)]\] where the bracket on the right hand side is a graded commutator.
\end{lemma}

\begin{proof}
This is proposition 4.1 of \cite{marle}
\end{proof}

Analogous to the commutator property of the Lie derivative with respect to a vector field we get 
\begin{proposition}\label{Lie of bracket} For $X\in\Gamma(\Lambda^k(TM))$ and $Y\in\Gamma(\Lambda^l(TM))$ we have that \[\L_{[X,Y]}\tau=(-1)^{(k-1)(l-1)}\L_X\L_Y\tau-\L_Y\L_X\tau.\] 

\end{proposition}

\begin{proof}
This follows from proposition \ref{interior}. For details see proposition A.3 of \cite{Poisson}
\end{proof}

}
The next proposition shows that the Schouten bracket and the Lie algebra differential are equal, when restricted to elements of a certain form.

\begin{proposition}
\label{wedge is Schouten}
For $p\in\Rho_{\g,k}$ and $\xi\in\g$ we have that \[\partial(p\wedge\xi)=[p,\xi].\]
\end{proposition}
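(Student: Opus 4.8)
The plan is to establish a Leibniz-type identity valid for \emph{every} $p\in\Lambda^k\g$ and then specialize to the Lie kernel. Since each of $\partial_{k+1}(p\wedge\xi)$, $(\partial_k p)\wedge\xi$ and $[p,\xi]$ is linear in $p$, it suffices to verify the identity for decomposable $p=\xi_1\wedge\cdots\wedge\xi_k$ and extend by linearity. I would emphasize that this reduction is legitimate even though a general element of $\Rho_{\g,k}=\ker\partial_k$ need not be decomposable: I prove the identity on all of $\Lambda^k\g$ first, and only afterwards impose $\partial_k p=0$.

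First I would set $\xi_{k+1}:=\xi$ and apply the definition (\ref{differential g}) of the homology differential to the $(k+1)$-vector $p\wedge\xi=\xi_1\wedge\cdots\wedge\xi_k\wedge\xi_{k+1}$. The resulting sum ranges over all pairs $1\le i<j\le k+1$, and the natural step is to split it according to whether $j\le k$ or $j=k+1$. The terms with $j\le k$ neither bracket nor delete the final factor $\xi_{k+1}$, so they reassemble precisely into $(\partial_k p)\wedge\xi$. The terms with $j=k+1$ are exactly those in which $\xi$ is bracketed against some $\xi_i$, and these I would compare, summand by summand, with the Schouten bracket $[p,\xi]=\sum_{i=1}^k(-1)^{i+1}[\xi_i,\xi]\wedge\xi_1\wedge\cdots\wedge\widehat\xi_i\wedge\cdots\wedge\xi_k$ obtained from the definition of $[\cdot,\cdot]$ with the second argument a single vector.

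Together these two groups yield a Leibniz rule of the form $\partial_{k+1}(p\wedge\xi)=(\partial_k p)\wedge\xi+[p,\xi]$, after which the proposition follows immediately: imposing $p\in\Rho_{\g,k}=\ker\partial_k$ kills the first summand and leaves exactly $\partial(p\wedge\xi)=[p,\xi]$.

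I expect the only genuine obstacle to be the sign bookkeeping in the second group of terms, and determining the sign of $[p,\xi]$ in the Leibniz rule is really the whole content of the computation. The differential (\ref{differential g}) attaches the factor $(-1)^{i+(k+1)}$ to the $j=k+1$ term, whereas the Schouten bracket assigns $(-1)^{i+1}$ to the matching summand; reconciling these — together with the position of the surviving wedge factors after $\xi_i$ and $\xi_{k+1}$ are removed — is where all the care is needed. Everything else reduces to a routine re-indexing of the two partial sums.
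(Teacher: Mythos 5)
Your proposal is correct and takes essentially the same route as the paper: the paper's proof likewise establishes the Leibniz-type identity $\partial(p\wedge\xi)=\partial(p)\wedge\xi+p\wedge\partial(\xi)+[p,\xi]$ on all of $\Lambda^k\g$ (your version simply omits the automatically vanishing $p\wedge\partial\xi$ term) and then kills $\partial(p)\wedge\xi$ using $p\in\Rho_{\g,k}$. The splitting of the sum over pairs, the matching against the Schouten bracket, and the sign reconciliation you describe are exactly the ``computation using the definition of $\partial$'' that the paper leaves implicit, and your explicit remark that decomposability need not hold on $\Rho_{\g,k}$ (so the identity must be proved on $\Lambda^k\g$ first) is a point of care the paper glosses over.
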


\begin{proof}
A computation using the definition of $\partial$ shows that
\begin{align*}
\partial(p\wedge \xi)&=\partial(p)\wedge\xi +p\wedge\partial(\xi)+[p,\xi]\\
&=[p,\xi]&\text{since $p\in\Rho_{\g,k}$ and $\partial(\xi)=[\xi,\xi]=0$. }
\end{align*}
\end{proof}

Let $\g$ be a Lie algebra acting on a manifold $M$. For $\xi\in\g$, we let $V_\xi\in\Gamma(TM)$ denote its infinitesimal generator. 

\begin{definition}\label{inf gener} For a decomposable element $p=\xi_1\wedge\cdots\wedge\xi_k$ of $\Lambda^k\g$, its infinitesimal generator, denoted $V_p$, is the multivector field $V_{\xi_1}\wedge\cdots\wedge V_{\xi_k}$.
\end{definition}

\begin{lemma}(\bf{Extended Cartan Lemma})\rm
\label{extended Cartan}
For a decomposable multivector field $p=\xi_1\wedge\cdots\wedge\xi_k$ in $\Lambda^k\g$ and differential form $\tau$ we have that 

\begin{align*}
(-1)^kd(V_p\hk\tau)&=V_{\partial{p}}\hk\tau +\sum_{i=1}^k(-1)^i(V_{\xi_1}\wedge\cdots\wedge\widehat V_{\xi_i}\wedge\cdots\wedge V_{\xi_k})\hk\L_{V_{\xi_i}}\tau +V_p\hk d\tau.
\end{align*}
\end{lemma}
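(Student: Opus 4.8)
The plan is to prove the Extended Cartan Lemma by induction on $k$, the degree of the multivector $V_p = V_{\xi_1} \wedge \cdots \wedge V_{\xi_k}$. The base case $k=1$ is exactly the classical Cartan magic formula. Indeed, for a single vector field $V_\xi$, the standard identity $\L_{V_\xi}\tau = d(V_\xi \hk \tau) + V_\xi \hk d\tau$ rearranges to $-d(V_\xi \hk \tau) = V_\xi \hk d\tau - \L_{V_\xi}\tau$, and since $\partial_1 = 0$ gives $V_{\partial p} = 0$ when $k=1$, this matches the claimed formula with the sum reducing to its single $i=1$ term.

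For the inductive step I would write $p = q \wedge \xi_k$ with $q = \xi_1 \wedge \cdots \wedge \xi_{k-1}$, so that $V_p = V_q \wedge V_{\xi_k}$, and use the standard rule for how the interior product of a wedge decomposes: $V_p \hk \tau = V_q \hk (V_{\xi_k} \hk \tau)$. The key computational tool is the graded Leibniz rule relating $d$ to the interior product with a decomposable multivector, which lets me push the exterior derivative through. The approach is to expand $d(V_p \hk \tau) = d\bigl(V_q \hk (V_{\xi_k} \hk \tau)\bigr)$, apply the degree-$(k-1)$ inductive hypothesis to the form $V_{\xi_k} \hk \tau$, and separately apply the $k=1$ Cartan formula to handle the $V_{\xi_k} \hk d$ pieces. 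The bookkeeping then consists of reassembling the Lie-derivative terms and the $V_{\partial p}$ term, using Proposition \ref{wedge is Schouten} together with the fact that $\partial p = \partial(q\wedge \xi_k) = \partial(q)\wedge \xi_k + (-1)^{k-1}[q,\xi_k]$ to identify the homology-differential contribution correctly.

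The main obstacle is tracking the signs. The factors $(-1)^k$, $(-1)^i$, and the signs coming from graded-commuting $d$ past interior products of odd/even degree multivectors must all be reconciled, and a single misplaced sign will break the telescoping that produces the summation $\sum_{i=1}^k (-1)^i (V_{\xi_1}\wedge\cdots\wedge\widehat{V_{\xi_i}}\wedge\cdots\wedge V_{\xi_k}) \hk \L_{V_{\xi_i}}\tau$. In particular, when the inductive hypothesis is applied to $V_{\xi_k}\hk\tau$, the $V_{\partial q}$ term together with the extra $[q,\xi_k]$ contribution must combine precisely into $V_{\partial p}\hk\tau$, and this is where Proposition \ref{wedge is Schouten} does the essential work of converting the Schouten bracket $[q,\xi_k]$ back into a piece of $\partial(q\wedge\xi_k)$.

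Alternatively, since this lemma is stated in \cite{ms} (it is the same statement as the earlier \textbf{Extended Cartan Lemma} quoted above, now phrased via infinitesimal generators), one may simply observe that the infinitesimal-generator assignment $p \mapsto V_p$ is a morphism of the relevant structures: it intertwines $\partial$ with the Schouten differential and sends $\L_{\xi_i}$ to $\L_{V_{\xi_i}}$. Under this observation the present statement is the image of the abstract version under $p \mapsto V_p$, so it reduces to Lemma~3.4 of \cite{ms}. I would present the inductive proof as the self-contained argument but note this reduction as the conceptual reason the formula holds.
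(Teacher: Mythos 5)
The paper offers no argument for this lemma at all: its entire proof is the citation ``This is Lemma 3.4 of \cite{ms} or Lemma 2.18 of \cite{cq}.'' So your closing paragraph, which reduces the statement to the cited lemma via the assignment $p\mapsto V_p$, is in effect the paper's own proof, while your inductive argument is a genuinely different, self-contained route. Its architecture is sound and is the standard way such formulas are established: the base case is exactly Cartan's formula (your reading of the $k=1$ case, with $V_{\partial p}=0$ and the empty wedge contracted as the identity, is correct), and the inductive step peels off one generator, uses $\L_X(Y\hk\tau)=[X,Y]\hk\tau+Y\hk\L_X\tau$ together with Cartan on the peeled factor, and reassembles the bracket contributions into $V_{\partial p}$. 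What your route buys is a proof readable inside this paper with every convention made explicit; what the citation buys is brevity and the outsourcing of exactly those conventions. I would also note that your expansion $\partial(q\wedge\xi_k)=\partial(q)\wedge\xi_k+(-1)^{k-1}[q,\xi_k]$ is the one consistent with the paper's definition of $\partial_k$ in equation (\ref{differential g}) (check it at $k=2$), so you are handling that sign correctly even though Proposition \ref{wedge is Schouten} as printed omits it.

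The one step that would fail as written is the decomposition you build the induction on. The identity $V_p\hk\tau=V_q\hk(V_{\xi_k}\hk\tau)$ presupposes the ordering $\iota_{X_1\wedge\cdots\wedge X_k}=\iota_{X_1}\circ\cdots\circ\iota_{X_k}$, but the ordering this paper actually uses (visible in Definition \ref{Lie n observables}, where $l_k(\alpha_1,\dots,\alpha_k)=\zeta(k)\,X_{\alpha_k}\hk\cdots X_{\alpha_1}\hk\omega$) is the reverse, $\iota_{X_1\wedge\cdots\wedge X_k}=\iota_{X_k}\circ\cdots\circ\iota_{X_1}$. The two orderings are not interchangeable here: the left side and the $V_p\hk d\tau$ term rescale by $(-1)^{k(k-1)/2}$ when the ordering is reversed, while the hatted terms and the $V_{\partial p}$ term rescale by $(-1)^{(k-1)(k-2)/2}$, and these differ by $(-1)^{k-1}$; a $k=2$ check against Cartan and $\L_X\iota_Y-\iota_Y\L_X=\iota_{[X,Y]}$ shows the lemma's stated signs are the ones matching the paper's ordering. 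Under that ordering, interior products by vector fields anticommute, so $V_q\hk(V_{\xi_k}\hk\tau)=(-1)^{k-1}V_p\hk\tau$, not $V_p\hk\tau$. The fix is easy: peel $\xi_k$ off as the outermost contraction, $V_p\hk\tau=V_{\xi_k}\hk(V_q\hk\tau)$, apply Cartan to $V_{\xi_k}$ first and the inductive hypothesis to $d(V_q\hk\tau)$, or else carry the factor $(-1)^{k-1}$ throughout. A similar caution applies to your reduction route: for a left action, $\xi\mapsto V_\xi$ satisfies $[V_\xi,V_\eta]=-V_{[\xi,\eta]}$, so the claim that $p\mapsto V_p$ intertwines $\partial$ with the vector-field-level boundary is itself only true up to signs that must be reconciled with the conventions of \cite{ms}. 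Neither issue is fatal to your plan, but as written the decomposition identity would derail precisely the telescoping you describe.
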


\begin{proof}

This is Lemma 3.4 of \cite{ms} or Lemma 2.18 of \cite{cq}.
\end{proof}

Let $\Phi:G\times M\to M$ be a Lie group action on $M$. 
\begin{definition}
For $A\in\Gamma(TM)$ we let $\Phi_{g}^\ast A$ denote the vector field given by the push-forward of $A$ by $\Phi_{g}^{-1}$. That is, \[(\Phi_{g}^\ast(A))_x:=(\Phi_{g^{-1}})_{\ast,\Phi_{g}(x)}(A_{\Phi_{g(x)}}),\] where $x\in M$. For a decomposable multivector field $Y=Y_1\wedge\cdots \wedge Y_k$ in $\Gamma(\Lambda^k(TM))$ we will let $\mathrm{Ad}_gY$ denote the extended adjoint action \[\mathrm{Ad}_gY=\mathrm{Ad}_gY_1\wedge\cdots\wedge \mathrm{Ad}_gY_k\] and we will let $\Phi_g^\ast Y$ denote the multivector field \[\Phi_g^\ast Y=\Phi_g^\ast Y_1\wedge\cdots\wedge \Phi_g^\ast Y_k.\]We also extend $\mathrm{ad}$ to a map $\mathrm{ad}:\g\times \Lambda^k\g\to\Lambda^k\g$ by \begin{equation}\label{ad equation}\mathrm{ad}_\xi(Y_1\wedge\cdots\wedge Y_k)=\sum_{i=1}^kY_1\wedge\cdots\wedge \mathrm{ad}_{\xi}(Y_i)\wedge\cdots\wedge Y_k.\end{equation}
\end{definition}

\begin{corollary}
\label{ad}
For $\xi\in\g$ we have that $\mathrm{ad}_\xi$ preserves the Lie kernel. That is, if $p$ is in $\Rho_{\g,k}$ then $\mathrm{ad}_{\xi}(p)$ is in $\Rho_{\g,k}$.
\end{corollary}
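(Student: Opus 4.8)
The plan is to reduce the statement to the single identity
$\partial_k\circ\mathrm{ad}_\xi=\mathrm{ad}_\xi\circ\partial_k$ on $\Lambda^k\g$, after which the corollary is immediate: if $p\in\Rho_{\g,k}=\ker\partial_k$, then $\partial_k(\mathrm{ad}_\xi p)=\mathrm{ad}_\xi(\partial_k p)=\mathrm{ad}_\xi(0)=0$, so that $\mathrm{ad}_\xi p\in\ker\partial_k=\Rho_{\g,k}$. Thus all the content lies in showing that the extended infinitesimal adjoint action commutes with the homology differential.

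I would establish this commutation conceptually rather than by brute force. Each $\mathrm{Ad}_g\colon\g\to\g$ is a Lie algebra automorphism, and I claim its extension $\Lambda^k\mathrm{Ad}_g\colon\Lambda^k\g\to\Lambda^k\g$ is a chain map for $\partial$. Indeed, evaluating formula \eqref{differential g} on a decomposable element $\eta_1\wedge\cdots\wedge\eta_k$ and using $\mathrm{Ad}_g[\eta_i,\eta_j]=[\mathrm{Ad}_g\eta_i,\mathrm{Ad}_g\eta_j]$, one reads off directly that $\partial\circ\Lambda^k\mathrm{Ad}_g=\Lambda^{k-1}\mathrm{Ad}_g\circ\partial$; the Jacobi identity in $\g$ enters only implicitly, through the automorphism property. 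Consequently every $\mathrm{Ad}_g$ preserves $\ker\partial_k=\Rho_{\g,k}$. To descend to $\mathrm{ad}_\xi$, I fix $p\in\Rho_{\g,k}$ and consider the smooth curve $t\mapsto\Lambda^k\mathrm{Ad}_{\exp(t\xi)}(p)$, which by the previous step lies entirely in the finite-dimensional, hence closed, linear subspace $\Rho_{\g,k}$. Its derivative at $t=0$ is exactly $\mathrm{ad}_\xi p$ by the definition of $\mathrm{ad}$ and its extension \eqref{ad equation}; since the velocity of a curve contained in a linear subspace again lies in that subspace, we conclude $\mathrm{ad}_\xi p\in\Rho_{\g,k}$.

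A fully self-contained alternative, if one prefers to avoid passing through the group, is to verify $\partial_k\,\mathrm{ad}_\xi p=\mathrm{ad}_\xi\,\partial_k p$ directly on a decomposable $p=\xi_1\wedge\cdots\wedge\xi_k$: expanding $\mathrm{ad}_\xi p$ via \eqref{ad equation}, applying \eqref{differential g}, and doing the same in the other order, the two sides differ only by terms grouping nested brackets such as $[\xi,[\xi_i,\xi_j]]$, which cancel by the Jacobi identity. One may also phrase this using the Schouten bracket, noting that $\mathrm{ad}_\xi p=[\xi,p]$ as a bracket with a degree-one element, so that the identity becomes a graded-derivation property of $\partial$ with respect to $[\cdot,\cdot]$. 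I do not expect a genuine obstacle here: the statement is structurally a ``the differential is natural'' fact, and the only care required is keeping the wedge-sign conventions of \eqref{ad equation} consistent so that the cancellations are exact. The single substantive input, however it is packaged, is the Jacobi identity in $\g$.
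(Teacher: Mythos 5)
Your proposal is correct, but it proves the corollary by a genuinely different mechanism than the paper. You establish the naturality identity $\partial_k\circ\mathrm{ad}_\xi=\mathrm{ad}_\xi\circ\partial_k$ on all of $\Lambda^k\g$ (either by integrating to $\mathrm{Ad}_g$, noting that Lie algebra automorphisms induce chain maps for $\partial$, and differentiating the curve $t\mapsto\mathrm{Ad}_{\exp(t\xi)}p$ back down, or by a direct Jacobi-identity computation), and then restrict to the kernel. The paper instead exploits the hypothesis $p\in\Rho_{\g,k}$ from the outset: it notes $\mathrm{ad}_\xi(p)=[\xi,p]$ and invokes Proposition \ref{wedge is Schouten}, which says precisely that for $p$ in the Lie kernel one has $[p,\xi]=\partial(p\wedge\xi)$; hence $\mathrm{ad}_\xi(p)$ is a $\partial$-boundary, and $\partial^2=0$ gives closedness. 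Given that proposition the paper's proof is two lines, and it yields the strictly stronger conclusion that $\mathrm{ad}_\xi(p)$ is \emph{exact}, not merely closed. Your argument buys generality in a different direction: the chain-map property holds on all of $\Lambda^k\g$ with no kernel hypothesis, and your intermediate step --- that $\mathrm{Ad}_g$ itself preserves $\Rho_{\g,k}$ --- is a fact the paper silently relies on later (the expression $f_k(\mathrm{Ad}_{g^{-1}}p)$ in the definition of the cocycle $\sigma_k$ only makes sense because of it) but never proves explicitly. Your closing Schouten-bracket remark, that $\mathrm{ad}_\xi p=[\xi,p]$ and that the claim follows from a derivation property of $\partial$, is essentially the paper's route, though in your write-up it remains a sketch rather than the argument you develop.
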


\begin{proof}

A computation shows that $\mathrm{ad}_\xi(p)=[\xi,p]$. Hence, by Proposition \ref{wedge is Schouten}, we see $[\xi,p]$ is exact. Thus it is closed.
\end{proof}

The next proposition shows that the infinitesimal generator of the extended adjoint action agrees with the pull back action.
\begin{proposition}
\label{adjoint over wedge}
Let $\Phi:G\times M\to M$ be a group action. For every $g\in G$ and $p\in\Lambda^k\g$ we have that \[V_{Ad_gp}=\Phi_{g^{-1}}^\ast V_p.\] Equivalently, the map $\Lambda^k\g\to\Gamma(\Lambda^k(TM))$ given by $\xi_1\wedge\cdots\wedge\xi_k\mapsto V_{\xi_1}\wedge\cdots\wedge V_{\xi_k}$ is equivariant with respect to the extended adjoint and pull back action.
\end{proposition}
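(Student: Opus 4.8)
The plan is to reduce the multivector statement to the case of a single vector field, prove that base case directly from the flow definition of the infinitesimal generator, and then reassemble. Since the map $p \mapsto V_p$ is linear and both $\mathrm{Ad}_g$ on $\Lambda^k\g$ and the pull-back $\Phi_{g^{-1}}^\ast$ on $\Gamma(\Lambda^k(TM))$ are, by definition, applied to decomposables factor by factor, it suffices to verify the identity on decomposable $p = \xi_1 \wedge \cdots \wedge \xi_k$, and for those the wedge structure lets me push everything down to the $k=1$ case.

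First I would establish the base case
\[V_{\mathrm{Ad}_g \xi} = \Phi_{g^{-1}}^\ast V_\xi, \qquad \xi \in \g, \ g \in G.\]
Recall that $(V_\xi)_x = \frac{d}{dt}\big|_{t=0}\Phi(\exp(t\xi), x)$. Using the standard identity $\exp(t\,\mathrm{Ad}_g \xi) = g \exp(t\xi) g^{-1}$ together with the action axiom $\Phi(g_1 g_2, \cdot) = \Phi_{g_1} \circ \Phi_{g_2}$, I rewrite
\[(V_{\mathrm{Ad}_g \xi})_x = \frac{d}{dt}\Big|_{t=0}\Phi_g\big(\Phi_{\exp(t\xi)}(\Phi_{g^{-1}}(x))\big).\]
Setting $y = \Phi_{g^{-1}}(x)$ and applying the chain rule, the outer $\Phi_g$ differentiates to $(\Phi_g)_{\ast,y}$ while the inner curve differentiates to $(V_\xi)_y$, giving
\[(V_{\mathrm{Ad}_g \xi})_x = (\Phi_g)_{\ast,\Phi_{g^{-1}}(x)}\big((V_\xi)_{\Phi_{g^{-1}}(x)}\big).\]
Comparing with the definition of the pull-back applied with $g$ replaced by $g^{-1}$ (so that $\Phi_{(g^{-1})^{-1}} = \Phi_g$), this is precisely $(\Phi_{g^{-1}}^\ast V_\xi)_x$, which proves the base case.

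With the base case in hand, for decomposable $p = \xi_1 \wedge \cdots \wedge \xi_k$ I compute
\[V_{\mathrm{Ad}_g p} = V_{\mathrm{Ad}_g \xi_1} \wedge \cdots \wedge V_{\mathrm{Ad}_g \xi_k} = \Phi_{g^{-1}}^\ast V_{\xi_1} \wedge \cdots \wedge \Phi_{g^{-1}}^\ast V_{\xi_k} = \Phi_{g^{-1}}^\ast V_p,\]
invoking the factor-wise definitions at the first and last equalities and the base case in the middle, then extend to all of $\Lambda^k\g$ by linearity. The equivalent reformulation in the statement is just the assertion that $V_{\mathrm{Ad}_g p} = \Phi_{g^{-1}}^\ast V_p$ holds for all $g$ and $p$, so nothing further is required.

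The only real obstacle is the bookkeeping in the base case: keeping track of which point each differential is based at, and matching the inverse in $\exp(t\,\mathrm{Ad}_g\xi) = g\exp(t\xi)g^{-1}$ against the inverse built into the definition of $\Phi_g^\ast$. Provided the action and generator conventions (left action, generator defined via $\exp(t\xi)$) are used consistently, the two inverses cancel correctly and the identity falls out; a sign or inversion slip there is the easiest way to land on $\Phi_g^\ast$ instead of $\Phi_{g^{-1}}^\ast$.
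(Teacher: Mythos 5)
Your proof is correct and follows essentially the same route as the paper: establish the $k=1$ identity $V_{\mathrm{Ad}_g\xi}=\Phi_{g^{-1}}^\ast V_\xi$, then extend to decomposables factor by factor and to all of $\Lambda^k\g$ by linearity. The only difference is that the paper simply cites Proposition 4.1.26 of Abraham--Marsden for the base case, whereas you prove it directly from the flow definition via $\exp(t\,\mathrm{Ad}_g\xi)=g\exp(t\xi)g^{-1}$ and the chain rule, and your derivation matches the paper's pull-back convention $(\Phi_{g^{-1}}^\ast A)_x=(\Phi_g)_{\ast,\Phi_{g^{-1}}(x)}(A_{\Phi_{g^{-1}}(x)})$ exactly.
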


\begin{proof}
Fix $q\in M$, $g\in G$. First suppose that $\xi\in\g$. Then by Proposition 4.1.26 of \cite{Marsden} we have that \[V_{Ad_g\xi}=\Phi_{g^{-1}}^\ast V_\xi.\] The claim now follows since for $p=\xi_1\wedge\cdots\wedge \xi_k$ in $\Lambda^k\g$,
\begin{align*}
V_{Ad_gp}&:=V_{Ad_g{\xi_1}}\wedge\cdots\wedge V_{Ad_g{\xi_k}}\\
&=\Phi_{g^{-1}}^\ast V_{\xi_1}\wedge\cdots\wedge\Phi_{g^{-1}}^\ast V_{\xi_k}\\
&=\Phi_{g^{-1}}^\ast V_p&\text{by definition.}
\end{align*}

\end{proof}

\section{Multisymplectic Geometry}

Here we recall some concepts and tools used in multisymplectic geometry.

\subsection{Multisymplectic Manifolds}
\begin{definition}
A manifold $M$ equipped with a closed $(n+1)$-form $\omega$ is called a pre-multisymplecitc manifold. If in addition the map $T_pM\to\Lambda^n T_p^\ast M,\  V\mapsto V\hk\omega$ is injective, then $(M,\omega)$ is called a multisymplectic, or $n$-plectic, manifold.
\end{definition} 

\begin{definition}\label{ham 1 form}
An $(n-1)$-form $\alpha\in\Omega^{n-1}(M)$ is called Hamiltonian if there exists a vector field $V_\alpha\in\Gamma(TM)$ such that $d\alpha=-V_\alpha\hk\omega$. Note that the non-degeneracy of $\omega$ insures uniqueness of the corresponding Hamiltonian vector field. We let $\Omega^{n-1}_{\mathrm{Ham}}(M)$ denote the space of Hamiltonian $(n-1)$-forms, which is a subspace of $\Omega^{n-1}(M)$.
\end{definition}
As in symplectic geometry, we are interested in Lie group actions which preserve the $n$-plectic form.

\begin{definition}
A Lie group action $\Phi:G\times M\to M$ is called multisymplectic if $\Phi_g^\ast\omega=\omega$. A Lie algebra action $\g\times \Gamma(TM)\to \Gamma(TM)$ is called multisymplectic if $\L_{V_\xi}\omega=0$ for all $\xi\in\g$. We remark that a multisymplectic Lie group action induces a multisymplectic Lie algebra action. Conversely, a multisymplectic Lie algebra action induces a multisymplectic group action if the Lie group is connected.  
\end{definition}

In \cite{rogers} it was shown that to any multisymplectic manifold one can associate the following $L_\infty$-algebra.
\begin{definition}\label{Lie n observables}
The Lie $n$-algebra of observables, $L_\infty(M,\omega)$ is the following $L_\infty$-algebra. Let $L=\oplus_{i=0}^nL_i$ where $L_0=\Omega^{n-1}_{\text{Ham}}(M)$ and $L_i=\Omega^{n-1-i}(M)$ for $1\leq i\leq n-1$. The maps $l_k:L^{\otimes k}\to L$ of degree $k-2$ are defined as follows: For $k=1$,

\[l_1(\alpha) =\left\{\begin{array}{l}d\alpha \ \ \ \ \ \text{if deg $\alpha>0$},\\
0 \ \ \ \ \ \ \ \text{if deg $\alpha=0$.}\end{array}\right.\]

For $k>1$,
\[l_k(\alpha_1,\cdots,\alpha_k) =\left\{\begin{array}{l}\zeta(k)X_{\alpha_k}\hk\cdots X_{\alpha_1}\hk\omega \ \ \ \ \ \text{if deg $\alpha_1\otimes\cdots\otimes\alpha_k=0$},\\
0 \ \ \ \ \ \ \ \ \ \ \ \ \ \ \ \ \ \ \ \ \ \ \ \ \ \ \ \ \ \ \text{if deg $\alpha_1\otimes\cdots\otimes\alpha_k>0$.}\end{array}\right.\]
Here $\zeta(k)$ is defined to equal $-(-1)^{\frac{k(k+1)}{2}}$. We introduce this notation as this sign comes up frequently.

\end{definition}

\subsection{Hamiltonian forms}
Let $(M,\omega)$ be an $n$-plectic manifold. The following definition generalizes the concept of a Hamiltonian $1$-form from symplectic geometry.

\begin{definition}\label{ham k form}
A differental form $\alpha\in\Omega^{n-k}(M)$ is called Hamiltonian if there exists a multivector field $X_\alpha\in\Gamma(\Lambda^k(TM))$ such that $d\alpha=-X_\alpha\hk\omega$. 
\end{definition}

Note that the Hamiltonian multivector field corresponding to a Hamiltonian form is not unique; however, the difference of any two Hamiltonian vector fields is in the kernel of $\omega$. The next proposition shows that the Hamiltonian forms are an $L_\infty$ subalgebra of the Lie $n$-algebra of observables 

\begin{proposition}
Let $\widehat L_i=\Omega^{n-i}_{\mathrm{Ham}}(M)$ and $\widehat L=\oplus_{i=0}^n\widehat L_i$. Let $\widehat L_\infty(M,\omega)$ denote the space $\widehat L$ together with the mappings $l_k$ defined above in the definition of the Lie-$n$-algebra of observables. Then $\widehat L_\infty(M,\omega)$ is an $L_\infty$-subalgebra of $L_\infty(M,\omega)$.
\end{proposition}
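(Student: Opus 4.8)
The plan is to use the standard characterization of a sub-$L_\infty$-algebra: a graded subspace that is closed under every structure map. Here $\widehat L$ is obtained from $L$ by restricting, in each degree, to the subspace of Hamiltonian forms sitting inside the corresponding space of all forms; in degree $0$ no restriction occurs, since $L_0=\Omega^{n-1}_{\mathrm{Ham}}(M)$ already consists of Hamiltonian forms. Because the maps $l_k$ on $\widehat L$ are by definition the restrictions of those on $L$, all of the higher Jacobi ($L_\infty$) relations are automatically inherited from $L_\infty(M,\omega)$. Hence the only thing I would need to verify is that $l_k\big(\widehat L^{\otimes k}\big)\subseteq \widehat L$ for every $k\geq 1$; equivalently, that contracting $\omega$ with wedges of Hamiltonian vector fields, and applying $d$, keeps us inside the Hamiltonian forms.

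For $l_1$ the check is immediate. On a degree-$0$ element $l_1$ vanishes, and $0\in\widehat L$. On an element $\alpha$ of positive degree, $l_1(\alpha)=d\alpha$, and since $d(d\alpha)=0=-0\hk\omega$, the form $d\alpha$ is Hamiltonian with Hamiltonian multivector field $0$. Thus $l_1$ preserves $\widehat L$; indeed every closed form is trivially Hamiltonian.

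The substance lies in $l_k$ for $k\geq 2$, which is nonzero only when all inputs lie in degree $0$, i.e. when $\alpha_1,\dots,\alpha_k\in\Omega^{n-1}_{\mathrm{Ham}}(M)$ with $X_{\alpha_i}\hk\omega=-d\alpha_i$. Writing $V=X_{\alpha_1}\wedge\cdots\wedge X_{\alpha_k}\in\Gamma(\Lambda^k(TM))$, the output is, up to the constant $\zeta(k)$ and the sign relating the iterated interior product to contraction by $V$, the form $V\hk\omega$. First I would note that each $X_{\alpha_i}$ preserves $\omega$: by Cartan's formula and $d\omega=0$,
\[
\L_{X_{\alpha_i}}\omega = d(X_{\alpha_i}\hk\omega)+X_{\alpha_i}\hk d\omega = -d(d\alpha_i)=0.
\]
I would then apply the Extended Cartan Lemma to $V$ and $\tau=\omega$. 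The term $V\hk d\omega$ vanishes since $\omega$ is closed, and every summand of $\sum_i(-1)^i(\cdots)\hk\L_{X_{\alpha_i}}\omega$ vanishes by the display above, leaving
\[
(-1)^k\, d(V\hk\omega)=(\partial V)\hk\omega,
\]
where $\partial V=\sum_{1\leq i<j\leq k}(-1)^{i+j}[X_{\alpha_i},X_{\alpha_j}]\wedge X_{\alpha_1}\wedge\cdots\wedge\widehat X_{\alpha_i}\wedge\cdots\wedge\widehat X_{\alpha_j}\wedge\cdots\wedge X_{\alpha_k}$ is a multivector field of degree $k-1$. Hence $d(V\hk\omega)=-X\hk\omega$ with $X=(-1)^{k+1}\partial V\in\Gamma(\Lambda^{k-1}(TM))$, so $V\hk\omega$—and therefore its scalar multiple $l_k(\alpha_1,\dots,\alpha_k)$—is a Hamiltonian form of the expected degree. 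This shows $l_k$ preserves $\widehat L$ and completes the argument.

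The main obstacle, and the one point to treat carefully, is that the Extended Cartan Lemma as stated is phrased for infinitesimal generators $V_p$ of a Lie-algebra action, whereas $V$ here is an arbitrary wedge of Hamiltonian vector fields not assumed to arise from any group action. The resolution is that the identity is purely differential-geometric and holds for every decomposable multivector field (the general form of Lemma 3.4 of \cite{ms}, with $V_{\partial p}$ replaced by the Schouten differential $\partial V$), so invoking it is legitimate; alternatively one can prove $d(V\hk\omega)=(-1)^k(\partial V)\hk\omega$ directly by induction on $k$ from Cartan's magic formula and the graded-derivation properties of the interior product and $d$. One should also pin down the sign convention relating $X_{\alpha_k}\hk\cdots X_{\alpha_1}\hk\omega$ to $V\hk\omega$, but since these differ only by a sign it has no effect on Hamiltonicity.
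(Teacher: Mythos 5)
Your proof is correct, and it takes a genuinely different route from the paper: the paper offers no argument at all for this proposition, disposing of it by citing Theorem 4.15 of \cite{me}. Your route --- reduce ``sub-$L_\infty$-algebra'' to closure of the graded subspace under every structure map, dispose of $l_1$ by noting that any closed form is Hamiltonian with multivector field $0$, and for $k\geq 2$ show that $V\hk\omega$ is Hamiltonian for $V=X_{\alpha_1}\wedge\cdots\wedge X_{\alpha_k}$ via the Extended Cartan Lemma --- reconstructs the content of that cited theorem entirely from tools already present in this paper (Lemma \ref{extended Cartan} and Cartan's formula). Two points you handle deserve emphasis. First, the scope issue you flag is real: Lemma \ref{extended Cartan} as stated applies to infinitesimal generators $V_p$ of a Lie algebra action, whereas your $V$ need not arise from any action; your resolution (the identity is purely differential-geometric and holds for any decomposable multivector field, with $V_{\partial p}$ replaced by $\partial V$, cf.\ Lemma 2.18 of \cite{cq}), together with the inductive fallback from Cartan's magic formula, is legitimate. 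Second, a subtlety your argument silently avoids: the expression $\partial V$ computed from a decomposition is not a well-defined, $C^\infty(M)$-linear operator on all of $\Gamma(\Lambda^k(TM))$, but you only ever invoke it for the one fixed decomposition $X_{\alpha_1}\wedge\cdots\wedge X_{\alpha_k}$, where the identity holds and exhibits an explicit multivector field $(-1)^{k+1}\partial V$ witnessing Hamiltonicity of $V\hk\omega$ --- which is all the definition requires. What the paper's citation buys is brevity; what your proof buys is a self-contained argument within this paper's own toolkit, plus the explicit formula for the Hamiltonian multivector field of $l_k(\alpha_1,\dots,\alpha_k)$, which is useful in its own right (for $k=2$ it is the familiar statement that $X_{\{\alpha,\beta\}}=\pm[X_\alpha,X_\beta]$).
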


\begin{proof}

This is Theorem 4.15 of \cite{me}.

\end{proof}
\subsection{Weak Homotopy Moment Maps}

For a group acting on a symplectic manifold $M$, a moment map is a Lie algebra morphism between $(\g,[\cdot,\cdot])$ and  $(C^\infty(M),\{\cdot,\cdot\})$, where $\{\cdot,\cdot\}$ is the Poisson bracket. In multisymplectic geometry, a moment map is an $L_\infty$-morphism from the exterior algebra of $\g$ to the Lie $n$-algebra of observables. We direct the reader to \cite{questions} for more information on $L_\infty$-algebras and morphisms.
\begin{definition}\label{hmm}
A (homotopy) moment map is an $L_\infty$-morphism $(f)$ between $\g$ and the Lie $n$-algebra of observables. This means that $(f)$ is a collection of maps $f_1:\Lambda^1\g\to\Omega^{n-1}_{\text{Ham}}(M)$ and $f_k:\Lambda^k\g\to\Omega^{n-k}(M)$ for $k\geq 2$ satisfying, for $p\in\Lambda^k\g$ \begin{equation}\label{hcmm}-f_{k-1}(\partial p)=df_k(p)+\zeta(k)V_p\hk\omega.\end{equation}
\end{definition}

It follows immediately from equation (\ref{hcmm}) that if $p$ is in $\Rho_{\g,k}$ then $f_k(p)$ is a Hamiltonian form. That is, if the domain of a homotopy moment map $(f)$ is restricted to the Lie kernel, then the image of $(f)$ is completely contained in the space of Hamiltonian forms. This motivates the definition of a weak homotopy moment map:

\begin{definition} A weak (homotopy) moment map, is a collection of maps $(f)$ with $f_k:\Rho_{\g,k}\to\Omega_{\mathrm{Ham}}^{n-k}(M)$ satisying \begin{equation}\label{hcmm kernel}df_k(p)=-\zeta(k)V_p\hk\omega.\end{equation}We refer to the component $f_k$ as a weak $k$-moment map.
\end{definition}
\del{
Notice that equation (\ref{hcmm kernel}) is directly analogous to the defining equation of a moment map in symplectic geometry. Moreover, it was shown in \cite{cq} that elements in the Lie kernel get mapped to conserved quantities, just like in symplectic geometry. Moreover, in \cite{me} it was shown that if one consider weak moment maps, then one can obtain a generalization of Noether's theorem to multisymplectic geometry.
}

\begin{remark}
Notice that any moment map gives a weak moment map. Indeed, if $(f)$ satisfies equation (\ref{hcmm}) then it satisfies equation (\ref{hcmm kernel}).
\end{remark}
\begin{remark}
Notice that a weak homotopy moment map coincides with the moment map from symplectic geometry in the case $n=1$. Indeed, setting $n=1$ in equations (\ref{hcmm}) and (\ref{hcmm kernel}) yields $f:\g\to C^\infty(M)$ such that $df(\xi)=-V_\xi\hk\omega$. Also notice that the $n$-th component of a weak moment map is precisely the moment map introduced by Madsen and Swann in \cite{ms} and \cite{MS}.
\end{remark}

The next proposition says that a weak moment map is still an $L_\infty$-morphism.
\begin{proposition}
A weak moment map is an $L_\infty$-morphism from $\g$ to $\widehat L_\infty(M,\omega)$.
\end{proposition}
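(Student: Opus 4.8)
The plan is to verify the $L_\infty$-morphism relations directly, reducing them to the defining equation (\ref{hcmm kernel}) of a weak moment map. First I would recall, as in \cite{questions} and encoded in Definition \ref{hmm}, that for a map $\g\to L_\infty(M,\omega)$ the full tower of $L_\infty$-morphism relations collapses to the single family (\ref{hcmm}): indexed by $k$, the arity-$k$ relation asserts $-f_{k-1}(\partial p)=df_k(p)+\zeta(k)V_p\hk\omega$ for $p\in\Lambda^k\g$. The mechanism behind this collapse is a degree count on the target brackets $l_r$. By Definition \ref{Lie n observables}, $l_r(\alpha_1,\dots,\alpha_r)$ vanishes unless every $\alpha_i$ sits in degree zero, i.e.\ in $\Omega^{n-1}_{\mathrm{Ham}}(M)$. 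Since $f_1$ takes values in degree zero while each $f_j$ with $j\geq 2$ takes values in the strictly positive degrees $\Omega^{n-j}(M)$, the only partition contributing to the arity-$k$ relation is the all-singletons one, which produces precisely $l_k(f_1(\xi_1),\dots,f_1(\xi_k))=\zeta(k)V_p\hk\omega$ (using $X_{f_1(\xi_i)}=V_{\xi_i}$, read off from the $k=1$ case of (\ref{hcmm})); every mixed partition contains a block of size $\geq 2$ and is annihilated.

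Next I would invoke the preceding proposition, that $\widehat L_\infty(M,\omega)$ is an $L_\infty$-subalgebra of $L_\infty(M,\omega)$, so that its brackets are the restrictions of the $l_r$ and the inclusion is a strict $L_\infty$-morphism. Because a weak moment map sends $\Rho_{\g,k}$ into the space of Hamiltonian forms $\Omega^{n-k}_{\mathrm{Ham}}(M)$, its image lies entirely within $\widehat L$; hence it suffices to check the morphism relations inside the subalgebra, where the degree argument above applies verbatim.

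The key reduction is then immediate: for $p\in\Rho_{\g,k}=\ker\partial_k$ we have $\partial p=0$, so $f_{k-1}(\partial p)=0$ and the arity-$k$ relation becomes $0=df_k(p)+\zeta(k)V_p\hk\omega$, which is exactly (\ref{hcmm kernel}). I would stress that writing this relation requires only the data of a weak moment map, namely $f_1$ on all of $\g=\Rho_{\g,1}$ (to form the surviving bracket term) together with $f_k$ on the Lie kernel; the lower component $f_{k-1}$ is evaluated only at $\partial p=0$, and no component is ever probed off $\ker\partial$. Thus a weak moment map satisfies every $L_\infty$-morphism relation and defines an $L_\infty$-morphism into $\widehat L_\infty(M,\omega)$.

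I expect the main obstacle to be the careful justification of the collapse of the relations, rather than the final substitution: one must verify that no target bracket $l_r$ with $2\leq r<k$ survives and track the signs $\zeta(k)$ so that the surviving term matches $\zeta(k)V_p\hk\omega$ precisely, and one must make explicit that, although the components are a priori only defined on the Lie kernel, the relations they are asked to satisfy never require evaluating $f_j$ outside $\ker\partial$.
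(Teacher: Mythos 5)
Your proposal is correct in substance, but note that the paper itself contains no argument for this proposition: its entire proof is the citation ``This is Proposition 5.9 of \cite{me}.'' So you have supplied what the paper outsources, namely a self-contained verification, and your argument is the standard one behind the cited result. The degree count from Definition \ref{Lie n observables} (each bracket $l_r$ with $r\geq 2$ vanishes unless every argument has degree zero, while $f_j$ for $j\geq 2$ lands in degree $j-1>0$) collapses the tower of $L_\infty$-morphism relations to the single family (\ref{hcmm}); for inputs $p\in\Rho_{\g,k}$ the term $f_{k-1}(\partial p)$ dies because $\partial p=0$; and what survives is exactly (\ref{hcmm kernel}). Combined with the previous proposition (that $\widehat L_\infty(M,\omega)$ is an $L_\infty$-subalgebra) and the fact that a weak moment map takes values in Hamiltonian forms, this yields the statement. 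What your write-up buys is an actual explanation of why the result holds; what the citation buys is a precise formulation, in \cite{me}, of what ``$L_\infty$-morphism'' means for maps defined only on the Lie kernel.

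One imprecision should be repaired. Your claim that ``no component is ever probed off $\ker\partial$'' is not literally true: in the arity-$k$ relation with inputs $\xi_1,\dots,\xi_k$, the mixed-partition terms formally read $l_r(\dots, f_j(\xi_{i_1}\wedge\cdots\wedge\xi_{i_j}),\dots)$ with $2\leq j<k$, and a sub-wedge of an element of $\Rho_{\g,k}$ need not lie in $\Rho_{\g,j}$, so these terms do probe $f_j$ outside its domain of definition. The correct statement is the one your degree count actually establishes: those terms vanish for \emph{any} value assigned to $f_j$ there, since any extension of $f_j$ to $\Lambda^j\g$ still takes values in degree $j-1>0$, which $l_r$ annihilates; hence the relations on kernel inputs are well posed and independent of the choice of extension (e.g.\ extension by zero). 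Relatedly, the proposition's phrase ``from $\g$'' cannot be read in the strict sense: a weak moment map has no components off the Lie kernel, and the relations at inputs $p\notin\Rho_{\g,k}$ are genuinely extra conditions --- they are precisely what separate a full homotopy moment map from a weak one. What you have proved, and what the cited Proposition 5.9 of \cite{me} can only mean, is that the $L_\infty$-morphism relations hold for all inputs lying in $\Rho_\g$.
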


\begin{proof}
This is Proposition 5.9 of \cite{me}.
\end{proof}

\begin{definition}
A homotopy moment map $(f)$ is equivariant if each component $f_k:\Lambda^k\g\to \Omega^{n-k}(M)$ is equivariant with respect to the adjoint and pullback actions respectively. That is, for all $g\in G$, $p\in\Lambda^k\g$ and $1\leq k\leq n$ \begin{equation}\label{equivariant equation}f_k(Ad_{g^{-1}}^\ast p)=\Phi_g^\ast f(p).\end{equation} Similarly, a weak moment map is equivariant if equation (\ref{equivariant equation}) holds for all $p\in\Rho_{\g,k}$ and $1\leq k\leq n$.
\end{definition}

We study the equivariance of moment maps further in the following section.

\del{

\begin{definition}
A  pre homotopy comoment maps is an $L_\infty$ morphism $(f)$ between the Chevalley Eilenberg complex and the Lie n-algebra of observables. That is, $(f)$ is a collection of maps $f_1:\Lambda^1\g\to\Omega^{n-1}_{\text{Ham}}(M)$ and $f_k:\Lambda^k\g\to\Omega^{n-k}(M)$ for $k\geq 2$ satisfying, for $p\in\Lambda^k\g$ \[-f_{k-1}(\partial p)=df_k(p)+(-1)^{k(k+1)}V_p\hk\omega.\]
\end{definition}

\begin{definition} We say a pre homotopy moment map is equivariant if for each $k$ we have \[f_k(p)=\Phi_{g^{-1}}^\ast(f_k(Ad_{g^{-1}}(p)))\] where $g$ is an arbitrary element of the group. We will call a pre homotopy moment map that is equivariant a homotopy moment map.
\end{definition}

Let us first point out aspects of this definition that differ from those in Hamiltonian mechanics. Fix a multi Hamiltonian system $(M,\omega, H)$. In symplectic geometry,  it is always true that $\L_{X_H}H=0$. However, this is no longer valid in the multisymplectic setup as the following examples shows.

\begin{example}
On $\R^3$, let $\omega=dx\wedge dy\wedge dz$ and let $H=xdx+ydz$. Then $dH=dy\wedge dz$ and so $X_H=\frac{\partial}{\partial x}$.  It follows that $X_H\hk H=x$ and $X_H\hk dH=0$, and so $\L_{X_H}H=dx$
\end{example}

Moreover, in the symplectic setup, given a moment map $\mu:\g\to C^\infty(M)$ it is always true that $\mu(\xi)$ is a conserved quantity. That is, $\L_{X_H}\mu(\xi)=0$. However, this result is not true in the multiysymplectic setup. Recall from \cite{cq} that a $k$-form $\alpha$ can be conserved with respect to $X_H$ in three different way. Namely, $\alpha$ is locally, globally, strictly conserved if either $\L_{X_H}\alpha$ is exact, closed or zero respectively. The next example shows that an element in the image of a moment map may not be conserved in any of the three ways just mentioned.

\begin{example}
To do
\end{example}
We now refine the definition of a homotopy moment map, and by doing so will resolve the two issues mentioned above.

Firstly, we restrict the domain of a moment map to the Lie kernel. It now follows, by the results in \cite{cq}, that every element in the image of a moment map is a conserved quantity. Summarizing the results from \cite{cq} we have that if the action globally or locally preserves $H$ then $f_k(p)$ is locally conserved. If the action strictly preserves $H$, then $f_k(p)$ is globally conserved.

The final restriction is that we require the image of a moment map to be contained in the algebra of special forms. By definition, the image is contained in the Hamiltonian forms since $X_{f_k(p)}=V_p$; however, each $f_k(p)$ is not necessarily special.

\begin{example}
Moment map where $f_k(p)$ is not special.
\end{example}

By adding in the constraint of special forms, we see that now $\L_{X_H}H$ does equal zero by proposition \ref{preserve omega}. Summarizing, our new definition is

\begin{definition}
A refined homotopy moment map is a collection of maps $(f)=\{f_k:\Rho_{\g,k}\to\Omega^{n-k}_S(M)$ satisfying \[d(f_k(p))=-(-1)^{k(k+1)}V_p\hk\omega\] for $k=1,\ldots,n+1$ and $p\in\Rho_{\g,k}$. Equivalently, this means that \[(V_p-X_{f_k(p)})\hk\omega=0.\]As before, the collection $(f)$ is called equivariant if $f_k(p)=\Phi_{g^{-1}}^\ast(f_k(Ad_{g^{-1}}(p)))$ for each $k$.
\end{definition}

\begin{remark}
Here we are talking about equivariance with respect to the natural action on $\Lambda^k\g\otimes\Omega^{n-k}_{\text{Ham}}$ induced from the action of $G$ on $M$. If $g\cdot$ were an arbitrary action on $\Lambda^k\g\otimes\Omega^{n-k}_{\text{Ham}}$ then we would say the moment map is equivariant if \[f_k(p)=g\cdot f_k(p).\]See proposition \ref{new action} below.
\end{remark}

}

\del{
\subsection{Hamiltonian forms}

The following definition extends the notion of a Hamiltonian function from symplectic geometry.
\begin{definition}

If for $\alpha\in\Omega^{n-1}(M)$ there exists $X_\alpha\in\Gamma(TM)$ such that $d\alpha=X_\alpha\hk \omega$ then we call $\alpha$ a Hamiltonian $(n-1)$-form and $X_\alpha$ its corresponding Hamiltonian vector field. Notice that if $\omega$ is multisymplectic then the Hamiltonian vector field is unique. We let $\Omega^{n-1}_{\text{Ham}}(M)$ denote the space of Hamiltonian $(n-1)$ forms.
\end{definition}

We now extend the notion of Hamiltonian $(n-1)$-forms.

\begin{definition}We call
 \[\Omega^{n-k}_{\text{Ham}}:=\{\alpha\in\Omega^{n-k}(M) ; \text{ there exists $X_\alpha\in\Gamma(\Lambda^k(TM))$ with $d\alpha=-X_\alpha\hk\omega$ } \} \]the set of Hamiltonian $(n-k)$ forms. We call $X_\alpha$ the (multi) Hamiltonian vector field associated to $\alpha$.\end{definition} Of course, multi Hamiltonian vector fields are not necessarily unique. However, it's clear that they differ by something in the kernel of $\omega$.
\begin{proposition}
If $\alpha\in\Omega^{n-k}_{\text{Ham}}$ then any two of its Hamiltonian vector fields differ by something in the kernel of $\omega$.
\end{proposition}
}

\del{

\subsection{A Generalized Poisson Bracket}

We now put in a structure that represents a generalized Poisson bracket.
\begin{definition}
Given $\alpha\in\Omega^{n-k}_{\text{Ham}}$ and $\beta\in\Omega^{n-l}_{\text{Ham}}$ we define their Poisson bracket to be \[\{\alpha,\beta\} := (-1)^{l+1}X_\alpha\hk X_\beta\hk \omega\]
\end{definition}

\begin{remark}
The sign choice in the above proposition is so that the Hamiltonian forms modulo closed forms constitutes a graded Lie algebra 
\end{remark}

We first show that the space of Hamiltonian vector fields is closed under the Poisson bracket.

\begin{lemma}
\label{Poisson is Schouten}
For $\alpha\in\Omega^{n-k}_{\text{Ham}}$ and $\beta\in\Omega^{n-l}_{\text{Ham}}$ we have that $\{\alpha,\beta\}$ is in $\Omega^{n+1-k-l}_{\text{Ham}}(M)$. More precisely, we have that (check signs) \[X_{\{\alpha,\beta\}}=(-1)^{l}[X_\alpha,X_\beta]\]
\end{lemma}

\begin{proof}
By lemma \ref{interior} we have that 
\begin{align*}
[X_\alpha,X_\beta]\hk\omega&=-X_\beta\hk(d(X_\alpha\hk\omega))+(-1)^ld(X_\alpha\hk X_\beta\hk\omega)-(-1)^{k(l+1)}X_\alpha\hk(X_\beta\hk d\omega)-(-1)^{(k+1)(l+1)}X_\alpha\hk(d(X_\beta\hk\omega))\\
&=(-1)^ld(X_\alpha\hk X_\beta\hk\omega)\\
&=(-1)^ld(\{\alpha,\beta\})
\end{align*}

\end{proof}

\del{
Notice also that each Hamiltonian vector field preserves $\omega$.

\begin{proposition}
\label{preserve omega}
If $X\in\Gamma(\Lambda^k(TM))$ is a Hamiltonian vector field, then $\L_X\omega=0$
\end{proposition}
\begin{proof}
By definition, there exists some $n-k$-form $\alpha$ such that $X\hk\omega=d\alpha$.Thus \[\L_X\omega=d(X\hk\omega)=d^2\alpha=0\]
\end{proof}
}
The bracket commutes in a graded fashion

\begin{proposition}
For $\alpha\in\Omega^{n-k}_{\text{Ham}}(M)$ and $\beta\in\Omega^{n-l}_{\text{Ham}}(M)$ we have that \[\{\alpha,\beta\}=(-1)^{(n-k)(n-l)}\{\beta,\alpha\}\]
\end{proposition}

\begin{proof}
This follows since $\omega$ is skew symnetric. 
\end{proof}
It's not true in general that our bracket satisfies the Jacobi identity; however, as the next proposition shows, it is satisfied up to an exact term. 

\begin{proposition}
\label{Jacobi} Fix $\alpha\in\Omega^{n-k}_{\text{Ham}}(M)$, $\beta\in\Omega^{n-l}_{\text{Ham}}(M)$ and $\gamma\in\Omega^{n-p}_{\text{Ham}}(M)$. Let $X_\alpha, X_\beta$ and $X_\gamma$ denote Hamiltonian vector fields for $\alpha,\beta$ and $\gamma$ respectively. Then we have that
\[(-1)^{(n-k)(n-p)}\{\alpha,\{\beta,\gamma\}\}+(-1)^{(n-l)(n-k)}\{\beta,\{\gamma,\alpha\}\}+(-1)^{(n-p)(n-l)}\{\gamma,\{\alpha,\beta\}\} = d(\varphi(X_\alpha,X_\beta,X_\gamma)).\] 
\end{proposition}
\begin{proof}
(May do this proof in the other paper, and cite results. Still missing the negative signs)
First note that 
\begin{align*}
\{\alpha,\beta\}&=(X_\alpha\hk\varphi)(X_\beta,\cdot)\\
&=d\alpha(X_\beta,\cdot)\\
&=\L_{X_\beta}\alpha-d(X_\beta\hk\alpha)\\
&=-\L_{X_\alpha}\beta+d(X_\alpha\hk\beta)&\text{since $\{\alpha,\beta\}=-\{\beta,\alpha\}$}
\end{align*}
Thus 
\begin{align*}
\{\alpha,\{\beta,\gamma\}\}&=-\L_{X_\alpha}\{\beta,\gamma\}+d(X_\alpha\hk\{\beta,\gamma\})\\
&=-\L_{X_\alpha}(-\L_{X_\beta}\gamma+d(X_\beta\hk\gamma))+d(X_\alpha\hk\{\beta,\gamma\})\\
&=\L_{X_\alpha}\L_{X_\beta}\gamma-\L_{X_\alpha}d(X_\beta\hk\gamma)+d(\varphi(X_\alpha,X_\beta,X_\gamma))
\end{align*}
and
\begin{align*}
\{\beta,\{\gamma,\alpha\}\}&=-\L_{X_\beta}(\{\gamma,\alpha\})+d(X_\beta\hk\{\gamma,\alpha\})\\
&=-\L_{X_\beta}(\L_{X_\alpha}\gamma-d(X_\alpha\hk\gamma))+d(X_\beta\hk\{\gamma,\alpha\})\\
&=-\L_{X_\beta}\L_{X_\alpha}\gamma+\L_{X_\beta}d(X_\alpha\hk\gamma)+d(\varphi(X_\alpha,X_\beta,X_\gamma))
\end{align*}
and
\begin{align*}
\{\gamma,\{\alpha,\beta\}\}&=-\{\{\alpha,\beta\},\gamma\}\\
&=\L_{X_{\{\alpha,\beta\}}}\gamma-d(X_{\{\alpha,\beta\}}\hk\gamma)
\end{align*}
Adding these terms and using proposition \ref{Poisson is Schouten} and proposition \ref{Lie of bracket} we get that

\begin{align*}
\{\alpha,\{\beta,\gamma\}\}+\{\beta,\{\gamma,\alpha\}\}+\{\gamma,\{\alpha,\beta\}\}&=-\L_{X_\alpha}(d(X_\beta\hk\gamma))+\L_{X_\beta}(d(X_\alpha\hk\gamma))+\\
&\ \ \ \ \ -d(X_{\{\alpha,\beta\}}\hk\gamma)+2d(\varphi(X_\alpha,X_\beta,X_\gamma))
\end{align*}
However, 
\begin{align*}
-d(X_{\{\alpha,\beta\}}\hk\gamma)&=d([X_\alpha,X_\beta]\hk\gamma)\\
&=d\left(\L_{X_\alpha}(X_\beta\hk\gamma)-X_\beta\hk \L_{X_\alpha}\gamma\right)\\
&=d(\L_{X_\alpha}(X_\beta\hk\gamma))-d(X_\beta\hk \L_{X_\alpha}\gamma)\\
&=d(\L_{X_\alpha}(X_\beta\hk\gamma))-d\bigl[X_\beta\hk(X_\alpha\hk d\gamma)-X_\beta\hk(d(X_\alpha\hk\gamma))\bigr]\\
&=\L_{X_\alpha}(d(X_\beta\hk\gamma))-d(X_\beta\hk X_\alpha\hk X_\gamma\hk\varphi)-d(X_\beta\hk(d(X_\alpha\hk\gamma)))\\
&=\L_{X_\alpha}(d(X_\beta\hk\gamma))-d(\varphi(X_\alpha,X_\beta,X_\gamma))-d(\L_{X_\beta}(X_\alpha\hk\gamma))\\
&=\L_{X_\alpha}(d(X_\beta\hk\gamma))-d(\varphi(X_\alpha,X_\beta,X_\gamma))-\L_{X_\beta}(d(X_\alpha\hk\gamma))
\end{align*}

Substituting this in, we see that  \[\{\alpha,\{\beta,\gamma\}\}+\{\beta,\{\gamma,\alpha\}\}+\{\gamma,\{\alpha,\beta\}\} = d(\varphi(X_\alpha,X_\beta,X_\gamma)).\] 
\end{proof}

}
\del{
\subsection{Special Forms}
We make a new definition
\begin{definition}
An element $\alpha\in\Omega^{n-k}_{\text{Ham}}(M)$ is called a special form if its Lie derivative with respect to any multivector field $X$ in that preserves $\omega$ vanishes. That is, if \[\L_X\omega=0 \implies \L_X\alpha=0.\] 
We let $\Omega^k_S(M)$ denote the set of special $k$-forms.
\end{definition}
}

\section{Equivariance of Weak Moment Maps}

In this section we show how the theory of equivariance of moments maps in symplectic geometry generalizes to multisymplectic geometry.

\subsection{Equivariance in Multisymplectic Geometry}

We first recall the theory from symplectic geometry without proof and then generalize to the multisymplectic setting. The results from symplectic geometry can all be found in Chapter 4.2 of \cite{Marsden} for example.  Let $(M,\omega)$ be a symplectic manifold, and $\Phi:G\times M\to M$ a symplectic Lie group action by a connected Lie group $G$ . We consider the induced symplectic Lie algebra action $\g\times\Gamma(TM)\to\Gamma(TM)$. Suppose that a moment map  $f:\g\to C^\infty(M)$ exists. That is, $df(\xi)=V_\xi\hk\omega$ for all $\xi\in\g$. By definition, $f$ is equivariant if \[f(\mathrm{Ad}_{g^{-1}}\xi)=\Phi_g^\ast f(\xi).\]Following Chapter 4.2 of \cite{Marsden}, for $g\in G$ and $\xi\in\g$ define $\psi_{g,\xi}\in C^\infty(M)$ by \begin{equation}\label{psi}\psi_{g,\xi}(x):= f(\xi)(\Phi_g(x))-f(\mathrm{Ad}_{g^{-1}}\xi)(x).\end{equation}

\begin{proposition}\label{psi constant}
For each $g\in G$ and $\xi\in\g$, the function $\psi_{g,\xi}\in C^\infty(M)$ is constant.
\end{proposition}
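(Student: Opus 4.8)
The plan is to show that $\psi_{g,\xi}$ is constant by showing its differential vanishes, and then invoking connectedness of $M$ (implicit in the setup). First I would compute $d\psi_{g,\xi}$ directly from its definition in equation (\ref{psi}). Since $d$ commutes with pullback, the first term contributes $d(f(\xi)\circ\Phi_g)=\Phi_g^\ast(df(\xi))=\Phi_g^\ast(V_\xi\hk\omega)$, using the defining equation of the moment map. The second term contributes $-d(f(\mathrm{Ad}_{g^{-1}}\xi))=-V_{\mathrm{Ad}_{g^{-1}}\xi}\hk\omega$, again by the moment map equation applied to the element $\mathrm{Ad}_{g^{-1}}\xi\in\g$.

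The key step is to recognize that these two terms cancel. For the first term I would push the pullback inside the interior product: since $\Phi_g$ is a symplectomorphism ($\Phi_g^\ast\omega=\omega$), one has $\Phi_g^\ast(V_\xi\hk\omega)=(\Phi_g^\ast V_\xi)\hk\Phi_g^\ast\omega=(\Phi_g^\ast V_\xi)\hk\omega$, where $\Phi_g^\ast V_\xi$ denotes the pulled-back vector field as in the earlier definition. The crucial input is then Proposition \ref{adjoint over wedge} (at the level $k=1$, which is the cited Proposition 4.1.26 of \cite{Marsden}), giving $V_{\mathrm{Ad}_g\xi}=\Phi_{g^{-1}}^\ast V_\xi$; replacing $g$ by $g^{-1}$ yields $\Phi_g^\ast V_\xi=V_{\mathrm{Ad}_{g^{-1}}\xi}$. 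Substituting this identity shows that the first term equals $V_{\mathrm{Ad}_{g^{-1}}\xi}\hk\omega$, which exactly cancels the second term, so $d\psi_{g,\xi}=0$.

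The main obstacle, such as it is, lies in bookkeeping the adjoint-versus-inverse conventions: I must be careful that the $\mathrm{Ad}_{g^{-1}}$ appearing in the equivariance definition matches the pullback identity after the substitution $g\mapsto g^{-1}$, since a sign or inversion error would spoil the cancellation. Once $d\psi_{g,\xi}=0$ is established, I would conclude that $\psi_{g,\xi}$ is locally constant, and hence constant on each connected component of $M$; the intended statement presumes $M$ connected (or one interprets ``constant'' componentwise). Thus the result follows, and the function $\psi_{g,\xi}$ serves precisely as the obstruction cocycle measuring the failure of equivariance, in direct analogy with the symplectic theory of \cite{Marsden}.
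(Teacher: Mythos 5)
Your proof is correct and is essentially the paper's own argument: the paper recalls this proposition without proof (citing Chapter 4.2 of \cite{Marsden}), but its proof of the multisymplectic generalization, Proposition \ref{general closed}, is precisely your computation --- commute $d$ past the pullback, apply the moment map equation, invoke Proposition \ref{adjoint over wedge}, and use invariance of $\omega$ under the action --- specialized from $p\in\Rho_{\g,k}$ back to $k=1$, $\xi\in\g$. Your caveat about connectedness of $M$ (needed to pass from $d\psi_{g,\xi}=0$ to constancy) is a fair point that the paper leaves implicit.
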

Since $\psi_{g,\xi}$ is constant, we may define the map $\sigma:G\to\g^\ast$ by \[\sigma(g)(\xi):=\psi_{g,\xi},\] where the right hand side is the constant value of $\psi_{g,\xi}$.
\del{This motivates consideration of the chain complex \[\g^\ast\otimes\R\to(G\to \g^\ast\otimes\R)\to(G\times G\to\g^\ast\otimes\R)\to\cdots,\]with the natural action of $G$ on $\g^\ast\otimes\R$ given by \[g\cdot\alpha(\xi):=Ad_{g^{-1}}^\ast\alpha(\xi)\] for $g\in G,\alpha\in\g^\ast\otimes\R$ and $\xi\in\g$.}
\begin{proposition}\label{cocycle}
The map $\sigma:G\to\g^\ast$ is a cocycle in the chain complex \[\g^\ast\to C^1(G,\g^\ast)\to C^2(G,\g^\ast)\to\cdots.\]That is, $\sigma(gh)=\sigma(g)+\mathrm{Ad}_{g^{-1}}^\ast\sigma(h)$ for all $g,h\in G$. 
\end{proposition}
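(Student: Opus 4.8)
The plan is to verify the cocycle identity directly from the definitions, exploiting the crucial fact established in Proposition \ref{psi constant} that each $\psi_{g,\xi}$ is constant, so that $\sigma(g)(\xi)$ may be computed by evaluating $\psi_{g,\xi}$ at any convenient point of $M$. First I would unwind what the claimed identity asserts pointwise. Since the $G$-action on $\g^\ast$ is the coadjoint action, $(\mathrm{Ad}_{g^{-1}}^\ast\sigma(h))(\xi) = \sigma(h)(\mathrm{Ad}_{g^{-1}}\xi)$, so the identity $\sigma(gh) = \sigma(g) + \mathrm{Ad}_{g^{-1}}^\ast\sigma(h)$ is equivalent to $\sigma(gh)(\xi) = \sigma(g)(\xi) + \sigma(h)(\mathrm{Ad}_{g^{-1}}\xi)$ for every $\xi \in \g$.

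Next I would record the two structural facts that make the computation telescope: $\Phi_{gh} = \Phi_g \circ \Phi_h$ and $\mathrm{Ad}_{(gh)^{-1}} = \mathrm{Ad}_{h^{-1}}\mathrm{Ad}_{g^{-1}}$. Writing $\eta := \mathrm{Ad}_{g^{-1}}\xi$, the defining equation (\ref{psi}) gives $\sigma(gh)(\xi) = f(\xi)(\Phi_{gh}(x)) - f(\mathrm{Ad}_{h^{-1}}\eta)(x)$, valid for every $x \in M$. Because $\sigma(g)(\xi)$ is the constant value of $\psi_{g,\xi}$, I would evaluate it at the point $\Phi_h(x)$, obtaining $\sigma(g)(\xi) = f(\xi)(\Phi_g(\Phi_h(x))) - f(\eta)(\Phi_h(x)) = f(\xi)(\Phi_{gh}(x)) - f(\eta)(\Phi_h(x))$; similarly I would evaluate $\sigma(h)(\eta)$ at $x$ to get $\sigma(h)(\eta) = f(\eta)(\Phi_h(x)) - f(\mathrm{Ad}_{h^{-1}}\eta)(x)$.

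Adding these two expressions, the two occurrences of $f(\eta)(\Phi_h(x))$ cancel and what remains is exactly $f(\xi)(\Phi_{gh}(x)) - f(\mathrm{Ad}_{h^{-1}}\eta)(x) = \sigma(gh)(\xi)$, which is the desired identity.

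I do not anticipate a genuine obstacle here: the entire content sits in Proposition \ref{psi constant} (which lets me slide the evaluation point freely) together with the functoriality of $\Phi$ and $\mathrm{Ad}$. The only points requiring care are bookkeeping of conventions — confirming that the $k=1$ case of the differential (\ref{group differential}) with the coadjoint action indeed reduces to the stated form $\sigma(gh) = \sigma(g) + \mathrm{Ad}_{g^{-1}}^\ast\sigma(h)$, and matching the left-action convention $\Phi_{gh} = \Phi_g\circ\Phi_h$ so that the cross terms cancel as claimed. Once those conventions are pinned down, the telescoping cancellation in the previous paragraph completes the proof.
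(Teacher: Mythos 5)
Your proof is correct and matches the paper's approach: the paper states this symplectic proposition without proof (citing Chapter 4.2 of \cite{Marsden}), but its proof of the multisymplectic generalization (Proposition \ref{multi cocycle}) is exactly your insert-and-cancel telescoping argument based on $\Phi_{gh}=\Phi_g\circ\Phi_h$ and $\mathrm{Ad}_{(gh)^{-1}}=\mathrm{Ad}_{h^{-1}}\mathrm{Ad}_{g^{-1}}$. Your one extra ingredient --- invoking Proposition \ref{psi constant} so that $\sigma(g)(\xi)$ can be computed by evaluating $\psi_{g,\xi}$ at $\Phi_h(x)$ --- is the correct pointwise substitute for the pullback bookkeeping in the paper's version, and is genuinely needed here because definition (\ref{psi}) is written via evaluation at translated points rather than as a pullback of the whole difference.
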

The map $\sigma$ is called the cocycle corresponding to $f$. The following proposition shows that for any symplectic group action, the cocycle gives a well defined cohomology class. 

\begin{proposition}\label{cohomology class}
For any symplectic action of  $G$ on $M$ admitting a moment map, there is a well defined cohomology class. More specifically, if $f_1$ and $f_2$ are two moment maps, then their corresponding cocycles $\sigma_1$ and $\sigma_2$ are in the same cohomology class, i.e. $[\sigma_1]=[\sigma_2]$.
\end{proposition}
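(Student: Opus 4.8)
The plan is to show that if $f_1$ and $f_2$ are two moment maps for the same symplectic action, then the difference of their corresponding cocycles $\sigma_1 - \sigma_2$ is a coboundary in the group cohomology complex $\g^\ast \to C^1(G,\g^\ast) \to \cdots$, which immediately gives $[\sigma_1] = [\sigma_2]$. The coboundary should come from an element of $\g^\ast$ (i.e. a $0$-cochain) built out of the difference $f_1 - f_2$.

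First I would examine the difference $f_1 - f_2 : \g \to C^\infty(M)$. Since both satisfy $df_i(\xi) = V_\xi \hk \omega$ for every $\xi \in \g$, subtracting gives $d(f_1 - f_2)(\xi) = 0$, so $(f_1 - f_2)(\xi)$ is a locally constant function for each $\xi$; assuming $M$ connected, it is a genuine constant. This produces a well-defined linear map $\tau : \g \to \R$, that is, an element $\tau \in \g^\ast$, by $\tau(\xi) := (f_1 - f_2)(\xi)$. This $\tau$ is precisely the candidate $0$-cochain whose coboundary I expect to equal $\sigma_1 - \sigma_2$.

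Next I would compute the coboundary of $\tau$ under $\partial_0 : \g^\ast \to C^1(G,\g^\ast)$ and compare it with $\sigma_1 - \sigma_2$. Using the definition $\sigma_i(g)(\xi) = \psi^{(i)}_{g,\xi} = f_i(\xi)(\Phi_g(x)) - f_i(\mathrm{Ad}_{g^{-1}}\xi)(x)$ from (\ref{psi}), the difference is
\[
(\sigma_1 - \sigma_2)(g)(\xi) = (f_1 - f_2)(\xi)(\Phi_g(x)) - (f_1 - f_2)(\mathrm{Ad}_{g^{-1}}\xi)(x) = \tau(\xi) - \tau(\mathrm{Ad}_{g^{-1}}\xi),
\]
where I have used that $f_1 - f_2$ evaluates to the constants $\tau(\xi)$ and $\tau(\mathrm{Ad}_{g^{-1}}\xi)$ respectively, independent of the point. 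I would then verify that the right-hand side equals $(\partial_0 \tau)(g)(\xi)$ for the differential from (\ref{group differential}) with the action $g \cdot \alpha = \mathrm{Ad}^\ast_{g^{-1}}\alpha$; concretely $(\partial_0 \tau)(g) = g \cdot \tau - \tau = \mathrm{Ad}^\ast_{g^{-1}}\tau - \tau$, so $(\partial_0\tau)(g)(\xi) = \tau(\mathrm{Ad}_{g^{-1}}\xi) - \tau(\xi)$, matching up to the overall sign convention.

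The main obstacle I anticipate is bookkeeping rather than conceptual difficulty: matching the sign and the precise form of the degree-zero group differential $\partial_0$ (as specialized from (\ref{group differential})) against the expression $\tau(\xi) - \tau(\mathrm{Ad}_{g^{-1}}\xi)$, and confirming the action on $\g^\ast$ is indeed the coadjoint action $\mathrm{Ad}^\ast_{g^{-1}}$ so that the coboundary formula applies cleanly. One should also confirm that $\tau$ is genuinely independent of the base point $x$ (using connectedness of $M$) so that it is a legitimate $0$-cochain. Once $\sigma_1 - \sigma_2 = \pm\,\partial_0 \tau$ is established, the conclusion $[\sigma_1] = [\sigma_2]$ is immediate.
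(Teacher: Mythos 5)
Your proposal is correct and follows essentially the same route as the paper: the paper (in its multisymplectic generalization, Theorem \ref{general class}, which reduces to this proposition at $n=k=1$) also observes that $f_1-f_2$ takes values in constants (closed forms), hence defines a $0$-cochain in the coefficient module, and that $\sigma_1-\sigma_2$ is exactly its coboundary. The sign ambiguity you flag is harmless, since a coboundary up to sign is still a coboundary.
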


By definition, we see that $\sigma$ is measuring the equivariance of $f$. That is, $\sigma=0$ if and only if $f$ is equivariant. Moreover, if the cocycle corresponding to a moment map vanishes in cohomology, the next proposition shows that we can modify the original moment map to make it equivariant.

\begin{proposition}\label{sigma class zero}
Suppose that $f$ is a moment map with corresponding cocycle $\sigma$. If $[\sigma]=0$ then $\sigma=\partial\theta$ for some $\theta\in\g^\ast$, and $f+\theta$ is an equivariant moment map.
\end{proposition}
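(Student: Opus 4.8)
The plan is to unwind what $[\sigma]=0$ means at the level of $0$-cochains and then verify directly that the shifted map $f+\theta$ has a vanishing cocycle. Since the text already records that $\sigma$ measures the failure of equivariance---that is, $\sigma=0$ if and only if $f$ is equivariant---it suffices to produce $\theta\in\g^\ast$ with $\sigma=\partial_0\theta$ and to check that replacing $f$ by $f+\theta$ subtracts exactly $\partial_0\theta$ from the cocycle.

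First I would spell out the coboundary in degree zero. Identifying $\g^\ast=C^0(G,\g^\ast)$ and using the action $g\cdot\alpha=\mathrm{Ad}_{g^{-1}}^\ast\alpha$, the group differential of equation (\ref{group differential}) with $k=0$ gives $(\partial_0\theta)(g)=g\cdot\theta-\theta=\mathrm{Ad}_{g^{-1}}^\ast\theta-\theta$. Thus the hypothesis $[\sigma]=0$, namely that the cocycle $\sigma$ of Proposition \ref{cocycle} is a coboundary, means precisely that there is some $\theta\in\g^\ast$ with $\sigma(g)=\mathrm{Ad}_{g^{-1}}^\ast\theta-\theta$ for all $g\in G$.

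Next I would check that $f+\theta$, where $\theta(\xi)$ is read as a constant function on $M$, is again a moment map: since $d(\theta(\xi))=0$, we get $d\big((f+\theta)(\xi)\big)=df(\xi)=V_\xi\hk\omega$, so the defining equation is untouched. It then remains to compute the cocycle $\sigma'$ attached to $f+\theta$ via (\ref{psi}). Expanding
\begin{align*}
\psi'_{g,\xi}(x)&=(f+\theta)(\xi)(\Phi_g(x))-(f+\theta)(\mathrm{Ad}_{g^{-1}}\xi)(x)\\
&=\psi_{g,\xi}(x)+\theta(\xi)-\theta(\mathrm{Ad}_{g^{-1}}\xi),
\end{align*}
and using $\theta(\mathrm{Ad}_{g^{-1}}\xi)=(\mathrm{Ad}_{g^{-1}}^\ast\theta)(\xi)$, I obtain $\sigma'(g)=\sigma(g)-(\mathrm{Ad}_{g^{-1}}^\ast\theta-\theta)=\sigma(g)-(\partial_0\theta)(g)$. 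By the previous step this is $\sigma-\partial_0\theta=0$, so $\sigma'=0$ and hence $f+\theta$ is equivariant.

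There is no genuine obstacle here; the substantive content---constancy of $\psi_{g,\xi}$, the cocycle identity, and the equivalence of $\sigma=0$ with equivariance---has already been established in Propositions \ref{psi constant} and \ref{cocycle} and the surrounding discussion. The only point demanding care is bookkeeping: pinning down the $k=0$ coboundary formula together with the convention $g\cdot\alpha=\mathrm{Ad}_{g^{-1}}^\ast\alpha$, so that the sign works out and the correct shift is $+\theta$ rather than $-\theta$.
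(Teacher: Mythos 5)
Your proposal is correct and follows essentially the same route as the paper: the paper states this symplectic proposition without proof (deferring to Chapter 4.2 of \cite{Marsden}), but its proof of the multisymplectic generalization, Proposition \ref{general make}, is exactly your argument---the shift $f+\theta$ is still a moment map because the added term is closed (here, constant), and the new cocycle computes to $\sigma-\partial_0\theta=0$, which is equivalent to equivariance. Your bookkeeping of the $k=0$ coboundary $(\partial_0\theta)(g)=\mathrm{Ad}_{g^{-1}}^\ast\theta-\theta$ and of the coadjoint convention is consistent with the paper's formulas, so nothing is missing.
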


We now show how this theory generalizes to multisymplectic geometry. For the rest of this section we let $(M,\omega)$ denote an $n$-plectic manifold and $\Phi:G\times M\to M$ a multisymplectic connected group action. We consider the induced multisymplectic Lie algebra action $\g\times\Gamma(TM)\to\Gamma(TM)$. Assume that we have a weak homotopy moment map $(f)$, i.e. a collection of maps $f_k:\Rho_{\g,k}\to \Omega^{n-k}_{\mathrm{Ham}}(M)$ satisfying equation (\ref{hcmm kernel}). 

To extend equation (\ref{psi}) to multisymplectic geometry, for $g\in G$ and $p\in\Rho_{\g,k}$, we define the following $(n-k)$-form: \begin{equation}\label{ms equiv}\psi^k_{g,p}:= f_k(p)-\Phi_{g^{-1}}^\ast f_k(\mathrm{Ad}_{g^{-1}}(p)).\end{equation}The following proposition generalizes Proposition \ref{psi constant}. 

\begin{proposition}\label{general closed}
The $(n-k)$-form $\psi^k_{g,p}$ is closed.
\end{proposition}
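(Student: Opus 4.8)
The plan is to show that $d\psi^k_{g,p} = 0$ by differentiating the definition (\ref{ms equiv}) and using the defining equation (\ref{hcmm kernel}) of a weak moment map twice — once for each of the two terms — together with the equivariance properties already established in the excerpt. Explicitly, $\psi^k_{g,p} = f_k(p) - \Phi_{g^{-1}}^\ast f_k(\mathrm{Ad}_{g^{-1}}(p))$, so I would compute $df_k(p)$ and $d(\Phi_{g^{-1}}^\ast f_k(\mathrm{Ad}_{g^{-1}}(p)))$ separately and check that they cancel.

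First I would handle the first term directly: since $p \in \Rho_{\g,k}$, equation (\ref{hcmm kernel}) gives $df_k(p) = -\zeta(k) V_p \hk \omega$. For the second term, the key move is that $d$ commutes with pullback, so $d\bigl(\Phi_{g^{-1}}^\ast f_k(\mathrm{Ad}_{g^{-1}}(p))\bigr) = \Phi_{g^{-1}}^\ast \, d\bigl(f_k(\mathrm{Ad}_{g^{-1}}(p))\bigr)$. Here I need that $\mathrm{Ad}_{g^{-1}}(p)$ again lies in the Lie kernel $\Rho_{\g,k}$, so that (\ref{hcmm kernel}) applies to it; this is furnished by Corollary \ref{ad} (the adjoint action preserves the Lie kernel — note $\mathrm{Ad}_{g^{-1}}$ is the group-level version of the $\mathrm{ad}$ statement, and for connected $G$ this follows by exponentiating). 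Applying (\ref{hcmm kernel}) to $\mathrm{Ad}_{g^{-1}}(p)$ yields $d f_k(\mathrm{Ad}_{g^{-1}}(p)) = -\zeta(k) V_{\mathrm{Ad}_{g^{-1}}(p)} \hk \omega$, so the second term becomes $-\zeta(k)\,\Phi_{g^{-1}}^\ast\bigl(V_{\mathrm{Ad}_{g^{-1}}(p)} \hk \omega\bigr)$.

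It then remains to compare $\Phi_{g^{-1}}^\ast\bigl(V_{\mathrm{Ad}_{g^{-1}}(p)} \hk \omega\bigr)$ with $V_p \hk \omega$. By Proposition \ref{adjoint over wedge}, $V_{\mathrm{Ad}_{g^{-1}}(p)} = \Phi_g^\ast V_p$, i.e. the infinitesimal generator of the adjoint-transformed multivector is the pullback of $V_p$. Substituting and using the naturality of the interior product and contraction under pullback — namely $\Phi_{g^{-1}}^\ast\bigl((\Phi_g^\ast V_p) \hk \omega\bigr) = V_p \hk \Phi_{g^{-1}}^\ast \omega$ — together with the multisymplecticity hypothesis $\Phi_g^\ast \omega = \omega$ (which gives $\Phi_{g^{-1}}^\ast\omega = \omega$), I would conclude that this expression equals $V_p \hk \omega$. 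Hence $d\psi^k_{g,p} = -\zeta(k)V_p\hk\omega - \bigl(-\zeta(k) V_p \hk \omega\bigr) = 0$.

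The main obstacle I anticipate is the bookkeeping in the final step: correctly tracking how $\Phi_{g^{-1}}^\ast$ interacts with the contraction of a pullback multivector field against $\omega$, and verifying that the pullback/push-forward conventions in Proposition \ref{adjoint over wedge} and in the definition of $\Phi_g^\ast$ for multivector fields line up so that the $g$-dependence cancels exactly rather than leaving a residual factor. The algebra using (\ref{hcmm kernel}) is routine once the equivariance identity $V_{\mathrm{Ad}_{g^{-1}}(p)} = \Phi_g^\ast V_p$ and the invariance $\Phi_g^\ast\omega = \omega$ are in hand; the care lies entirely in matching inverse-group-element conventions.
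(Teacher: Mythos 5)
Your proposal is correct and is essentially the paper's own argument: both use the moment map equation on $p$ and on $\mathrm{Ad}_{g^{-1}}p$, commute $d$ with the pullback, invoke Proposition \ref{adjoint over wedge}, and finish with $G$-invariance of $\omega$. The only (cosmetic) difference is that the paper first applies the injective map $\Phi_g^\ast$ to $\psi^k_{g,p}$ and shows the result is closed, whereas you differentiate $\psi^k_{g,p}$ directly and handle the pullback--contraction naturality in place; your explicit remark that $\mathrm{Ad}_{g^{-1}}$ preserves the Lie kernel is a point the paper uses implicitly.
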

\begin{proof}

\del{Since our moment maps are taking values in the special forms, it follows that $\psi_{g,p}$ is a special form as the sum of two special forms. It remains to show that $\psi_{g,p}$ is closed. } Since $\Phi^\ast_g$ is injective and commutes with the differential, our claim is equivalent to showing that $\Phi^\ast_g(\psi^k_{g,p})$ is closed. Indeed we have that
\begin{align*}
d(\Phi_g^\ast(\psi^k_{g,p}))&=d(\Phi_g^\ast f_k(p)-f_k((\mathrm{Ad}_{g^{-1}}p)))\\
&=\Phi_g^\ast(df_k(p))-d(f_k(\mathrm{Ad}_{g^{-1}}(p)))\\
&=-\zeta(k)\Phi_g^\ast(V_p\hk\omega)+\zeta(k)V_{\mathrm{Ad}_{g^{-1}}p}\hk\omega&\text{since $(f)$ is moment map}\\
&=-\zeta(k)\Phi_g^\ast(V_p\hk\omega)+\zeta(k)(\Phi_g^\ast V_p)\hk\omega&\text{by Proposition \ref{adjoint over wedge}}\\
&=-\zeta(k)\Phi_g^\ast(V_p\hk\omega)+\zeta(k)\Phi_g^\ast(V_p\hk\omega)&\text{since $G$ preserves $\omega$}\\
&=0.
\end{align*}
\end{proof}

\del{
\begin{definition}
To ease the notation, we will let $\Omega_{Sc}^k(M)$ denote the intersection of $\Omega^k_{S}(M)$ and $\Omega^k_{\text{cl}}(M)$. 
\end{definition}
}

In analogy to symplectic geometry, we now show that each component of a weak moment map gives a cocycle.
\begin{definition} We call the map $\sigma_k:G\to\Rho_{\g,k}^\ast\otimes\Omega^{n-k}_{\text{cl}}$ defined by \[\sigma_k(g)(p):=\psi^k_{g,p}\]the cocycle corresponding to $f_k$. 
\end{definition}

As a generalization of Proposition \ref{cocycle} we obtain:

\begin{proposition}
\label{multi cocycle}
The map $\sigma_k$ is a $1$-cocycle in the chain complex \[\Rho_{\g,k}^\ast\otimes\Omega^{n-k}_{\mathrm{cl}}\to C^1(G,\Rho_{\g,k}^\ast\otimes\Omega^{n-k}_{\mathrm{cl}}) \to C^2(G,\Rho_{\g,k}^\ast\otimes\Omega^{n-k}_{\mathrm{cl}}) \to \cdots,\]where the action of $G$ on $\Rho_{\g,k}^\ast\otimes\Omega^{n-k}_{\mathrm{cl}}$ is given by the tensor product of the co-adjoint and pullback actions. The induced infinitesimal action of $\g$ on $\Rho_{\g,k}^\ast\otimes\Omega^{n-k}_{\mathrm{cl}}$ is defined as follows: for $f\in\Rho_{\g,k}^\ast\otimes\Omega^{n-k}_{\mathrm{Ham}}$, $p\in\Rho_{\g,k}$ and $\xi\in \g$, \begin{equation}(\xi\cdot f)(p):=f(\mathrm{ad}_\xi(p))+\L_{V_\xi}f(p).\end{equation}
\end{proposition}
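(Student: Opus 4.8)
The plan is to verify directly that $\sigma_k$ satisfies the $1$-cocycle condition, i.e. that $\partial_1\sigma_k=0$ for the group-cohomology differential of equation (\ref{group differential}). Specializing that differential to a $1$-cochain, the condition reduces to the identity
\[\sigma_k(g_1g_2)=\sigma_k(g_1)+g_1\cdot\sigma_k(g_2)\]
for all $g_1,g_2\in G$, where $g_1\cdot$ is the tensor-product action. Concretely, viewing $h\in\Rho_{\g,k}^\ast\otimes\Omega^{n-k}_{\mathrm{cl}}$ as a linear map $\Rho_{\g,k}\to\Omega^{n-k}_{\mathrm{cl}}$, this left action is $(g\cdot h)(p)=\Phi_{g^{-1}}^\ast\bigl(h(\mathrm{Ad}_{g^{-1}}p)\bigr)$, the combination of the contragredient adjoint action on $\Rho_{\g,k}^\ast$ and the pullback action on forms. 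Before the main computation I would record that $\sigma_k$ really lands in $\Rho_{\g,k}^\ast\otimes\Omega^{n-k}_{\mathrm{cl}}$: linearity of $p\mapsto\psi^k_{g,p}$ is immediate from linearity of $f_k$, $\mathrm{Ad}_{g^{-1}}$ and pullback; closedness is exactly Proposition \ref{general closed}; and the fact that $\mathrm{Ad}_{g^{-1}}p$ again lies in $\Rho_{\g,k}$, so that $f_k$ may be applied to it, follows by integrating the infinitesimal statement of Corollary \ref{ad} since $G$ is connected.

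The heart of the argument is then a bookkeeping computation. First I would expand the left-hand side using the definition (\ref{ms equiv}),
\[\sigma_k(g_1g_2)(p)=f_k(p)-\Phi_{(g_1g_2)^{-1}}^\ast f_k\bigl(\mathrm{Ad}_{(g_1g_2)^{-1}}p\bigr),\]
and rewrite the two composite operators via the homomorphism property $\mathrm{Ad}_{(g_1g_2)^{-1}}=\mathrm{Ad}_{g_2^{-1}}\mathrm{Ad}_{g_1^{-1}}$ together with the contravariance of pullback $\Phi_{(g_1g_2)^{-1}}^\ast=\Phi_{g_1^{-1}}^\ast\Phi_{g_2^{-1}}^\ast$. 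Subtracting $\sigma_k(g_1)(p)=f_k(p)-\Phi_{g_1^{-1}}^\ast f_k(\mathrm{Ad}_{g_1^{-1}}p)$, the two copies of $f_k(p)$ cancel and the remainder factors through $\Phi_{g_1^{-1}}^\ast$ as
\[\Phi_{g_1^{-1}}^\ast\Bigl[f_k(\mathrm{Ad}_{g_1^{-1}}p)-\Phi_{g_2^{-1}}^\ast f_k\bigl(\mathrm{Ad}_{g_2^{-1}}(\mathrm{Ad}_{g_1^{-1}}p)\bigr)\Bigr].\]
The bracketed expression is precisely $\psi^k_{g_2,\,\mathrm{Ad}_{g_1^{-1}}p}=\sigma_k(g_2)(\mathrm{Ad}_{g_1^{-1}}p)$, so the remainder equals $(g_1\cdot\sigma_k(g_2))(p)$, which is exactly the cocycle identity. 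This is the direct analogue of Proposition \ref{cocycle} from the symplectic case.

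Finally, to justify the stated form of the induced infinitesimal $\g$-action, I would differentiate $t\mapsto(\exp(t\xi)\cdot h)(p)=\Phi_{\exp(t\xi)^{-1}}^\ast\bigl(h(\mathrm{Ad}_{\exp(t\xi)^{-1}}p)\bigr)$ at $t=0$, applying the Leibniz rule with the standard derivatives $\tfrac{d}{dt}\big|_0\Phi_{\exp(t\xi)}^\ast=\L_{V_\xi}$ and $\tfrac{d}{dt}\big|_0\mathrm{Ad}_{\exp(t\xi)}=\mathrm{ad}_\xi$, which recovers $h(\mathrm{ad}_\xi(p))+\L_{V_\xi}h(p)$ up to the overall sign convention fixed for the generator. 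I expect no serious obstacle: the whole proposition is a formal consequence of the multiplicativity of $\mathrm{Ad}$ and the contravariance of pullback. The only real care required is tracking composition order so that the factored tail matches the chosen left-module structure — in particular, confirming that the convention under which $\Rho_{\g,k}^\ast\otimes\Omega^{n-k}_{\mathrm{cl}}$ is a genuine left $G$-module is the one for which the telescoped expression is $g_1\cdot\sigma_k(g_2)$.
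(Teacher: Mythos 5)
Your proposal is correct and follows essentially the same route as the paper's own proof: a direct verification of the identity $\sigma_k(g_1g_2)=\sigma_k(g_1)+g_1\cdot\sigma_k(g_2)$ using $\mathrm{Ad}_{(g_1g_2)^{-1}}=\mathrm{Ad}_{g_2^{-1}}\mathrm{Ad}_{g_1^{-1}}$ and $\Phi_{(g_1g_2)^{-1}}^\ast=\Phi_{g_1^{-1}}^\ast\Phi_{g_2^{-1}}^\ast$, with the remainder telescoping through $\Phi_{g_1^{-1}}^\ast$ to give $(g_1\cdot\sigma_k(g_2))(p)$ — the paper adds and subtracts $\Phi_{g^{-1}}^\ast f_k(\mathrm{Ad}_{g^{-1}}p)$ where you subtract $\sigma_k(g_1)(p)$ outright, which is the same computation. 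Your additional checks (that $\sigma_k$ lands in $\Rho_{\g,k}^\ast\otimes\Omega^{n-k}_{\mathrm{cl}}$ via Proposition \ref{general closed}, and the differentiation yielding the infinitesimal action) are correct supplements the paper leaves implicit.
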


\begin{proof}
By equation (\ref{group differential}) we know that $(\partial(\sigma)(g,h))(p):=\sigma(gh)(p)-\sigma(g)(p)-g\cdot\sigma(h)(p)$. For arbitrary $p\in\Rho_{\g,k}$ we have
\begin{align*}
\sigma_k(gh)(p)&=f_k(p)-\Phi^\ast_{(gh)^{-1}}(f_k\mathrm{Ad}_{(gh)^{-1}}p)\\
&=f_k(p)-\Phi^\ast_{g^{-1}}\Phi^\ast_{h^{-1}}(f_k((\mathrm{Ad}_{h^{-1}}\mathrm{Ad}_{g^{-1}}p)))\\
&=f_k(p)-\Phi^\ast_{g^{-1}}(f_k(\mathrm{Ad}_{g^{-1}}p))+\Phi^\ast_{g^{-1}}(f_k(\mathrm{Ad}_{g^{-1}}p))-\Phi^\ast_{g^{-1}}(\Phi^\ast_{h^{-1}}(f_k(\mathrm{Ad}_{h^{-1}}\mathrm{Ad}_{g^{-1}}p)))\\
&=\sigma_k(g)(p)+\Phi_{g^{-1}}^\ast(\sigma_k(h)(\mathrm{Ad}_{g^{-1}}p))\\
&=\sigma_k(g)(p)+g\cdot \sigma_k(h)(p).
\end{align*}
\end{proof}

\begin{definition} Let \[\mathfrak{C}=\bigoplus_{k=1}^n \Rho_{\g,k}^\ast\otimes\Omega^{n-k}_{\text{cl}}\] Let $\sigma=\sigma_1+\sigma_2+\cdots $. We call the map $\sigma$ the cocycle corresponding to $(f)$. 
\end{definition}
Since the components of a weak moment map do not interact, as a corollary to Proposition \ref{multi cocycle} we obtain
\begin{proposition}
The map $\sigma$ is a cocycle in the complex \[\mathfrak{C}\to C^1(G,\mathfrak{C})\to C^2(G,\mathfrak{C})\to\cdots\]

\end{proposition}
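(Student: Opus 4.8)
The plan is to exploit the additivity of group cohomology in the coefficient module: a cochain valued in a direct sum of $G$-modules is a cocycle precisely when each of its graded components is, so the result will follow from Proposition \ref{multi cocycle} applied summand by summand. This is exactly the content of the phrase ``the components of a weak moment map do not interact.''

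First I would record the $G$-module structure on $\mathfrak{C}=\bigoplus_{k=1}^n \Rho_{\g,k}^\ast\otimes\Omega^{n-k}_{\mathrm{cl}}$. By construction it is the direct sum of the structures on the individual summands $\Rho_{\g,k}^\ast\otimes\Omega^{n-k}_{\mathrm{cl}}$, each given by the tensor product of the coadjoint and pullback actions as in Proposition \ref{multi cocycle}. In particular, for every $g\in G$ the action $g\cdot(-)$ on $\mathfrak{C}$ preserves the grading by $k$. From this I would deduce the direct sum decomposition of the cochain groups $C^j(G,\mathfrak{C})=\bigoplus_{k=1}^n C^j(G,\Rho_{\g,k}^\ast\otimes\Omega^{n-k}_{\mathrm{cl}})$, since a smooth alternating map $G^j\to\bigoplus_k V_k$ is the same datum as a tuple of smooth alternating maps $G^j\to V_k$.

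Next I would check that, under this identification, the group differential of equation (\ref{group differential}) is block diagonal with respect to the grading, i.e. $\partial_j=\bigoplus_{k=1}^n \partial_j^{(k)}$, where $\partial_j^{(k)}$ is the differential for coefficients in $\Rho_{\g,k}^\ast\otimes\Omega^{n-k}_{\mathrm{cl}}$. This holds because every term on the right-hand side of (\ref{group differential}) is assembled only from the $G$-action and from group multiplication among the arguments, and neither operation mixes the summands of $\mathfrak{C}$. Finally, since $\sigma=\sum_{k=1}^n\sigma_k$ with each $\sigma_k$ landing in the $k$-th summand, I would compute $\partial_1\sigma=\sum_{k=1}^n\partial_1^{(k)}\sigma_k=0$, each term vanishing because $\sigma_k$ is a $1$-cocycle by Proposition \ref{multi cocycle}; hence $\sigma$ is a cocycle.

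The argument is genuinely a corollary, so there is no real obstacle beyond bookkeeping; the one point deserving an explicit line is the claim that the differential splits along the grading. I would justify this simply by the naturality of the group differential in the coefficient module together with the fact that $\mathfrak{C}$ is a direct sum of $G$-modules, which forces $\partial_j$ to act summandwise.
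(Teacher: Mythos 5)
Your proposal is correct and follows exactly the paper's route: the paper states this proposition as an immediate corollary of Proposition \ref{multi cocycle}, justified only by the remark that ``the components of a weak moment map do not interact,'' which is precisely the direct-sum/block-diagonal bookkeeping you spell out. Your write-up simply makes explicit the splitting $C^j(G,\mathfrak{C})=\bigoplus_k C^j(G,\Rho_{\g,k}^\ast\otimes\Omega^{n-k}_{\mathrm{cl}})$ and the summandwise action of the differential that the paper leaves implicit.
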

The next theorem shows that multisymplectic  Lie algebra actions admitting weak moment maps give a well defined cohomology class, generalizing Proposition \ref{cohomology class}.

\begin{theorem}\label{general class}
Let $G$ act multisymplectically on $(M,\omega)$. To any weak moment map, there is a well defined cohomology class $[\sigma]$ in $H^1(G,\mathfrak{C})$. More precisely if $(f)$ and $(g)$ are two weak moment maps with cocycles $\sigma$ and $\tau$, then $\sigma-\tau$ is a coboundary.
\end{theorem}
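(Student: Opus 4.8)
The plan is to imitate the proof of Proposition \ref{cohomology class} from symplectic geometry, whose heart is the observation that two moment maps differ by a (locally) constant function. In the multisymplectic setting the corresponding statement is that two weak moment maps differ componentwise by a \emph{closed} form, and this difference will turn out to be precisely a $0$-cochain in the complex $\mathfrak{C}\to C^1(G,\mathfrak{C})\to\cdots$ whose coboundary is $\pm(\sigma-\tau)$.

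First I would record the key linear-algebra fact. Let $(f)$ and $(g)$ be the two weak moment maps with components $f_k,g_k:\Rho_{\g,k}\to\Omega^{n-k}_{\mathrm{Ham}}(M)$, and set $\theta_k(p):=f_k(p)-g_k(p)$ for $p\in\Rho_{\g,k}$. Since both satisfy the defining equation (\ref{hcmm kernel}), we have $d\theta_k(p)=df_k(p)-dg_k(p)=-\zeta(k)V_p\hk\omega+\zeta(k)V_p\hk\omega=0$, so $\theta_k(p)$ is closed. As $\theta_k$ is linear in $p$, this exhibits $\theta_k\in\Rho_{\g,k}^\ast\otimes\Omega^{n-k}_{\mathrm{cl}}$, and hence $\theta:=\theta_1+\theta_2+\cdots\in\mathfrak{C}$ is a genuine $0$-cochain. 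This is the exact analog of ``the difference is constant'' in the symplectic case.

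Next I would compute the coboundary $\partial_0\theta$ and compare it with $\sigma-\tau$. Evaluating the group differential (\ref{group differential}) at $k=0$ gives, for a group element $a\in G$ and $p\in\Rho_{\g,k}$, the formula $(\partial_0\theta)(a)(p)=(a\cdot\theta)(p)-\theta(p)$; and the description of the $G$-action on $\mathfrak{C}$ already isolated in the proof of Proposition \ref{multi cocycle} is $(a\cdot\theta)(p)=\Phi_{a^{-1}}^\ast\theta(\mathrm{Ad}_{a^{-1}}p)$. On the other hand, subtracting the two instances of the defining formula (\ref{ms equiv}) yields $(\sigma-\tau)(a)(p)=\theta(p)-\Phi_{a^{-1}}^\ast\theta(\mathrm{Ad}_{a^{-1}}p)$. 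Comparing the two shows $\sigma-\tau=-\partial_0\theta=\partial_0(-\theta)$, a coboundary; hence $[\sigma]=[\tau]$ in $H^1(G,\mathfrak{C})$ and the class attached to a weak moment map is well defined.

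The argument is largely bookkeeping, so the main obstacle is purely in matching conventions: one must verify that the abstract tensor of the co-adjoint and pullback actions agrees with the concrete pullback-by-$\Phi_{a^{-1}}$ and adjoint terms appearing in $\psi^k_{a,p}$, and keep track of the signs coming from $\zeta(k)$ and from the $(-1)^k$ in the $k=0$ term of (\ref{group differential}). Once the identity $(a\cdot\theta)(p)=\Phi_{a^{-1}}^\ast\theta(\mathrm{Ad}_{a^{-1}}p)$ is in hand (exactly as in Proposition \ref{multi cocycle}), the recognition of $\sigma-\tau$ as a coboundary is immediate, and no genuinely new estimate or construction is required beyond the closedness of $\theta_k(p)$ established in the first step.
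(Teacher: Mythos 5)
Your proposal is correct and follows essentially the same route as the paper: both observe that $\theta_k=f_k-g_k$ is closed-valued (hence a $0$-cochain in $\mathfrak{C}$) and then identify $\sigma-\tau$ as $\pm\partial_0\theta$. In fact you track the sign more carefully than the paper, which simply asserts $\sigma_k-\tau_k=\partial(f_k-g_k)$; your version $\sigma-\tau=\partial_0(-\theta)$ is the precise statement, and the discrepancy is immaterial since either way the difference is a coboundary.
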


\begin{proof}
We need to show that $\sigma_k-\tau_k$ is a coboundary for each $k$. We have that \[\sigma_k(g)(p)-\tau_k(g)(p)=f_k(p)-g_k(p)-\Phi_{g^{-1}}^\ast(f_k(Ad_{g^{-1}}p)-g_k(Ad_{g^{-1}}(\xi))).\]
However, $(f)$ and $(g)$ are both moment maps and so $d(f_k(p)-g_k(p))=0$. Thus $f_k-g_k$ is in $\mathfrak{C}$. Moreover, by equation (\ref{group differential 2}), we see that $\sigma_k-\tau_k=\partial(f_k-g_k)$.

\end{proof}

If $(f)$ is not equivariant but its cocycle vanishes, then we can define a new equivariant moment map from $(f)$, in anology to Proposition \ref{sigma class zero}. 

\begin{proposition}\label{general make}
Let $(f)$ be a weak moment map with cocycle satisfying $[\sigma]=0$. This means that $\sigma=\partial\theta$ for some $\theta\in\mathfrak{C}$. The map $(f)+\theta$ is a weak moment map that is equivariant.
\end{proposition}

\begin{proof}
We have that $(f)+\theta$ is a moment map since $\theta(p)$ is closed for all $p\in\Rho_{\g,k}$.  Let $\widetilde\sigma$ denote the corresponding cocycle. Note that by equation (\ref{group differential 2}) we have $(\partial(\theta)(g))(p)=\theta(\mathrm{Ad}_{g^{-1}}p)-\Phi_g^\ast\theta(p)$. By the injectivity of $\Phi_g^\ast$, to show that $\widetilde\sigma=0$, it is sufficient to show that $\Phi_g^\ast(\widetilde\sigma(g)(p))=0$ for all $g\in G$ and $p\in\Rho_{g,k}$. Indeed, \begin{align*}
\Phi_g^\ast(\widetilde\sigma_k(g)(p))&=\Phi_g^\ast f(p)+\Phi_g^\ast\theta(p)-f(\mathrm{Ad}_{g^{-1}}p)-\theta(\mathrm{Ad}_{g^{-1}}p)\\
&=\sigma(g)(\xi)-\partial\theta(g)(\xi)\\
&=\sigma(g)(\xi)-\sigma(g)(\xi)&\text{since $\partial\theta =\sigma$}\\
&=0.
\end{align*}
\end{proof}

If $(f)$ is not equivariant with respect to the $G$-action, then we can define a new action for which $(f)$ is equivariant.

\begin{proposition}
\label{new action}
For $g\in G$ define $\Upsilon_g:\Rho_{\g,k}^\ast\otimes\Omega^{n-k}_{\mathrm{Ham}}\to\Rho_{\g,k}^\ast\otimes\Omega^{n-k}_{\mathrm{Ham}}$ by 

\[\Upsilon_g(\theta)(p):= \Phi_{g^{-1}}^\ast \theta(\mathrm{Ad}_{g^{-1}}p)+\sigma(g)(p)\]where $\theta$ is in $\Rho_{\g,k}^\ast\otimes\Omega^{n-k}_{\mathrm{Ham}}$ and $p$ is in $\Rho_{\g,k}$. Then $\Upsilon_g$ is a group action and $(f)$ is $\Upsilon_g$-equivariant.

\end{proposition}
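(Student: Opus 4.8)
The plan is to establish three things in turn: that each $\Upsilon_g$ is a well-defined self-map of $\Rho_{\g,k}^\ast\otimes\Omega^{n-k}_{\mathrm{Ham}}$, that $g\mapsto\Upsilon_g$ satisfies the axioms of a left group action, and that each component $f_k$ is a fixed point of $\Upsilon_g$, which is precisely the assertion of $\Upsilon_g$-equivariance of $(f)$.

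First I would check well-definedness. Fix $\theta$ and $p\in\Rho_{\g,k}$. Since $\mathrm{Ad}_{g^{-1}}$ is a Lie algebra automorphism it commutes with the homology differential $\partial_k$ extended to $\Lambda^k\g$, and hence preserves $\Rho_{\g,k}=\ker\partial_k$ (cf.\ Corollary \ref{ad}); so $\theta(\mathrm{Ad}_{g^{-1}}p)$ is defined and lies in $\Omega^{n-k}_{\mathrm{Ham}}(M)$. Because $G$ preserves $\omega$, the pullback $\Phi_{g^{-1}}^\ast$ sends a Hamiltonian form to a Hamiltonian form (if $d\alpha=-X_\alpha\hk\omega$ then $d\Phi_{g^{-1}}^\ast\alpha=-(\Phi_{g^{-1}}^\ast X_\alpha)\hk\omega$), while $\sigma(g)(p)$ is closed by Proposition \ref{general closed} and so is Hamiltonian with zero Hamiltonian multivector field. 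Thus $\Upsilon_g(\theta)(p)\in\Omega^{n-k}_{\mathrm{Ham}}(M)$, and linearity in $p$ is clear.

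Next I would verify the action axioms. The cocycle property of Proposition \ref{multi cocycle} forces $\sigma(e)=0$ (from $\sigma(e)=\sigma(e)+e\cdot\sigma(e)$), and since $\Phi_e^\ast$ and $\mathrm{Ad}_e$ are identities, $\Upsilon_e=\mathrm{id}$. For composition I would expand $\Upsilon_g(\Upsilon_h(\theta))(p)$ directly. The pullbacks compose as $\Phi_{g^{-1}}^\ast\Phi_{h^{-1}}^\ast=\Phi_{(gh)^{-1}}^\ast$ and the adjoints as $\mathrm{Ad}_{h^{-1}}\mathrm{Ad}_{g^{-1}}=\mathrm{Ad}_{(gh)^{-1}}$, yielding the linear term $\Phi_{(gh)^{-1}}^\ast\theta(\mathrm{Ad}_{(gh)^{-1}}p)$ of $\Upsilon_{gh}(\theta)(p)$. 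The leftover inhomogeneous terms are $\Phi_{g^{-1}}^\ast\sigma(h)(\mathrm{Ad}_{g^{-1}}p)+\sigma(g)(p)=g\cdot\sigma(h)(p)+\sigma(g)(p)$, which equals $\sigma(gh)(p)$ exactly by the $1$-cocycle identity of Proposition \ref{multi cocycle}. Hence $\Upsilon_g\circ\Upsilon_h=\Upsilon_{gh}$.

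Finally, equivariance reduces to showing $\Upsilon_g(f_k)=f_k$. Substituting $\theta=f_k$ and using $\sigma_k(g)(p)=\psi^k_{g,p}=f_k(p)-\Phi_{g^{-1}}^\ast f_k(\mathrm{Ad}_{g^{-1}}p)$ from equation (\ref{ms equiv}),
\[\Upsilon_g(f_k)(p)=\Phi_{g^{-1}}^\ast f_k(\mathrm{Ad}_{g^{-1}}p)+\big(f_k(p)-\Phi_{g^{-1}}^\ast f_k(\mathrm{Ad}_{g^{-1}}p)\big)=f_k(p).\]
This construction is the familiar device of absorbing the equivariance-obstruction cocycle into a twisted affine action, so no new geometric input is needed; the only genuine content is the composition law, and the main thing to get right is the bookkeeping of the $\sigma$-terms, which is delivered verbatim by the cocycle identity of Proposition \ref{multi cocycle}.
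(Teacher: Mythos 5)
Your proposal is correct and follows essentially the same route as the paper's proof: the composition law is obtained from the cocycle identity of Proposition \ref{multi cocycle}, and equivariance of $f_k$ follows by substituting the definition of $\sigma_k$ so that the terms cancel. Your additional checks (well-definedness of $\Upsilon_g$ as a map into $\Rho_{\g,k}^\ast\otimes\Omega^{n-k}_{\mathrm{Ham}}$, and deriving $\sigma(e)=0$ from the cocycle identity rather than asserting it) are correct refinements that the paper leaves implicit, but they do not change the argument.
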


\begin{proof}
The proof is a direct extension from the proof of Proposition 4.2.7 in \cite{Marsden}. We first show that $\Upsilon_g$ is a group action. Indeed, $\sigma(e)=0$ and $\mathrm{Ad}_{e}$ is the identity showing that $\Upsilon_e(\theta)=\theta$. For the multiplicative property of the group action we have
\begin{align*}
\Upsilon_{gh}(\theta)(p)&=\Phi^\ast_{(gh)^{-1}}\theta(\mathrm{Ad}_{(gh)^{-1}}p)+\sigma(gh)(p)\\
&=\Phi^\ast_{g^{-1}}(\Phi^\ast_{h^{-1}}(\theta(\mathrm{Ad}_{h^{-1}}Ad_{g^{-1}}p)))+\sigma(g)(p)+\Phi^\ast_{g^{-1}}(\sigma(h)(\mathrm{Ad}_{g^{-1}}p))&\text{by Proposition \ref{cocycle}}\\
&=\Phi^\ast_{g^{-1}}(\Upsilon_h(\theta)(\mathrm{Ad}_{g^{-1}}p))+\sigma(g)(p)\\
&=\Upsilon_g(\Upsilon_h(\theta))(p).
\end{align*} To show that $f_k$ is equivariant
The moment map $f_k$ is equivariant with respect to this action because 
\begin{align*}
\Upsilon_g(f_k)(p)&=\Phi^\ast_{g^{-1}}(f_k(\mathrm{Ad}_{g^{-1}}p))+\sigma(g)(p)\\
&=\Phi^\ast_{g^{-1}}f_k(\mathrm{Ad}_{g^{-1}}p)+ f_k(p)-\Phi^\ast_{g^{-1}}f_k(\mathrm{Ad}_{g^{-1}}p)\\
&=f_k(p).
\end{align*}
\end{proof}

\subsection{Infinitesimal Equivariance in Multisymplectic Geometry}

Next we recall the notion of infinitesimal version of equivariance in symplectic geometry. That is, we differentiate equation (\ref{psi}) to obtain the map $\Sigma:\g\times\g\to C^\infty(M)$ defined  by $\Sigma(\xi,\eta):=\left.\frac{d}{dt}\right|_{t=0}\psi_{\exp(t\eta),\xi}$. A straightforward computation, which we generalize in Proposition \ref{comp of Sigma}, gives that \[\Sigma(\xi,\eta)=f([\xi,\eta])-\{f(\xi),f(\eta)\}.\] Another quick computation shows that $df([\xi,\eta])=d\{f(\xi),f(\eta)\}$, showing $\Sigma(\xi,\eta)$ is a constant function for every $\xi,\eta\in\g$. That is, $\Sigma$ is a function from $\g\times\g$ to $\R$. 

\begin{proposition}\label{inf cocycle}The map $\Sigma:\g\times\g\to\R$ is a Lie algebra $2$-cocycle in the chain complex \[\R\to C^1(\g, \R)\to C^2(\g,\R)\to\cdots.\] 
\end{proposition}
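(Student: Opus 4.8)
The plan is to verify directly that $\delta_2 \Sigma = 0$, where $\delta_2$ is the Lie algebra cohomology differential with coefficients in the trivial module $\R$. Since $\Sigma \in C^2(\g,\R)$ is already established to be skew-symmetric and $\R$-valued (it is the constant value of $\psi_{\exp(t\eta),\xi}$ differentiated at $t=0$), the only remaining task is to check the cocycle condition. Recall that for the trivial action of $\g$ on $\R$, the differential $\delta_2: C^2(\g,\R) \to C^3(\g,\R)$ from equation (\ref{group differential 2}) reduces to $(\delta_2 \Sigma)(\xi,\eta,\zeta) = -\Sigma([\xi,\eta],\zeta) + \Sigma([\xi,\zeta],\eta) - \Sigma([\eta,\zeta],\xi)$, since all the $\xi_i \cdot$ terms vanish on the trivial module.

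First I would substitute the explicit formula $\Sigma(\xi,\eta) = f([\xi,\eta]) - \{f(\xi),f(\eta)\}$ into the three terms of $\delta_2 \Sigma$. The contributions involving $f$ of a double bracket, namely $f([[\xi,\eta],\zeta]) + f([[\eta,\zeta],\xi]) + f([[\zeta,\xi],\eta])$, cancel to zero by the Jacobi identity in $\g$ together with the linearity of $f$. The remaining terms assemble into an expression built entirely from Poisson brackets of the functions $f(\xi), f(\eta), f(\zeta)$, specifically a cyclic sum of the form $\{f(\xi),\{f(\eta),f(\zeta)\}\} + \text{(cyclic)}$ together with mixed terms such as $\{f([\eta,\zeta]),f(\xi)\}$. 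The plan is to show this leftover vanishes by invoking the Jacobi identity for the Poisson bracket on $C^\infty(M)$ and the relation $df([\xi,\eta]) = d\{f(\xi),f(\eta)\}$, which lets me replace $f([\eta,\zeta])$ by $\{f(\eta),f(\zeta)\}$ up to a constant inside a Poisson bracket (the constant dies since $\{c,\cdot\} = 0$).

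The main obstacle I anticipate is organizing the sign bookkeeping so that the $f$-of-double-bracket terms land in exactly the cyclic arrangement the Jacobi identity annihilates, and simultaneously confirming that the Poisson-bracket terms cancel rather than merely combining. Concretely, after substituting $\{f(\eta),f(\zeta)\}$ for $f([\eta,\zeta])$ inside $\{f([\eta,\zeta]),f(\xi)\}$, the mixed terms become iterated Poisson brackets whose cyclic sum is precisely what the Jacobi identity for $\{\cdot,\cdot\}$ kills. I would therefore carry out the computation by grouping terms into two families---the pure $f$-of-bracket family controlled by the $\g$-Jacobi identity, and the pure Poisson family controlled by the $C^\infty(M)$-Jacobi identity---and verify each vanishes separately. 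An alternative, cleaner route would be to recognize $\Sigma$ as the infinitesimal shadow of the group cocycle $\sigma$ (whose cocycle property is Proposition \ref{cocycle}) and obtain $\delta_2 \Sigma = 0$ by differentiating the group-cocycle identity $\sigma(gh) = \sigma(g) + \mathrm{Ad}_{g^{-1}}^\ast \sigma(h)$ twice at the identity; this transfers the work to an already-proven statement and avoids the sign-heavy direct calculation, so I would likely present that as the backbone and relegate the direct Jacobi verification to a remark.
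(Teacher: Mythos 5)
Your proposal is correct, and it reaches the statement by a route that differs in formulation from the paper's. The paper never proves Proposition \ref{inf cocycle} directly: it is recalled from Chapter 4.2 of \cite{Marsden}, and within the paper it is recovered as the $n=k=1$ case of Proposition \ref{general inf}. That general proof is organized differently from yours: the cocycle condition is checked in degree one, with coefficients in the nontrivial module $\Rho_{\g,k}^\ast\otimes\Omega^{n-k}_{\mathrm{cl}}$, using the Lie-derivative form $\Sigma_k(\xi)(p)=f_k([\xi,p])+\L_{V_\xi}f_k(p)$; there the mixed terms cancel in pairs with no substitution needed, and the two surviving families are killed by the $\g$-Jacobi identity and by the commutator identity $\L_{V_\xi}\L_{V_\eta}-\L_{V_\eta}\L_{V_\xi}=\L_{V_{[\xi,\eta]}}$. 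You instead verify the trivial-coefficient degree-two condition exactly as stated, with the Poisson form $\Sigma(\xi,\eta)=f([\xi,\eta])-\{f(\xi),f(\eta)\}$; your split into an $f$-of-double-bracket family (killed by the $\g$-Jacobi identity) and a Poisson family is sound, and the key step --- replacing $f([\eta,\zeta])$ by $\{f(\eta),f(\zeta)\}$ inside a bracket, valid since the two differ by a constant and constants are Poisson-central --- correctly turns the leftover into a cyclic sum annihilated by the Poisson Jacobi identity. What your version buys is a self-contained symplectic proof of the proposition precisely as formulated ($2$-cocycle, trivial coefficients); what the paper's version buys is a phrasing that generalizes verbatim to $k>1$, where the Lie-derivative action plays the role your Poisson bracket plays here. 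Your alternative route, differentiating the group cocycle identity of Proposition \ref{cocycle} at the identity, is also legitimate and matches the paper's own remark that $\Sigma$ is just the derivative of $\sigma$; note only that it quietly uses the smoothness of $\sigma$ and the standard identification of skew-symmetric $\g^\ast$-valued $1$-cocycles for the coadjoint action with scalar $2$-cocycles for the trivial action.
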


\begin{definition}\label{inf equiv moment}
A moment map $f:\g\to C^\infty(M)$ is infinitesimally equivariant if $\Sigma=0$, i.e. if \begin{equation}\label{equivariant equation 2}f([\xi,\eta])=\{f(\xi),f(\eta)\}\end{equation} for all $\xi,\eta\in\g$.
\end{definition}

\begin{proposition}
For a connected Lie group, infinitesimal equivariance and equivariance are equivalent.
\end{proposition}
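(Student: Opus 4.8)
The plan is to exploit the relationship between the group cocycle $\sigma:G\to\g^\ast$ of Proposition \ref{cocycle} and the Lie algebra cocycle $\Sigma:\g\times\g\to\R$, observing that $\Sigma$ is precisely the derivative of $\sigma$ at the identity. Unravelling the definitions, $\Sigma(\xi,\eta)=\frac{d}{dt}\big|_{t=0}\psi_{\exp(t\eta),\xi}=\frac{d}{dt}\big|_{t=0}\sigma(\exp(t\eta))(\xi)$, so the element $\delta\sigma(\eta):=\frac{d}{dt}\big|_{t=0}\sigma(\exp(t\eta))\in\g^\ast$ satisfies $\delta\sigma(\eta)(\xi)=\Sigma(\xi,\eta)$.

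The easy implication is that equivariance forces infinitesimal equivariance: if $f$ is equivariant then $\psi_{g,\xi}=0$ identically, hence $\sigma\equiv 0$, and differentiating at the identity gives $\Sigma=0$. This direction does not use connectedness.

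For the converse, suppose $\Sigma=0$, so that $\delta\sigma(\eta)=0$ for every $\eta\in\g$. First I would record that $\sigma(e)=0$, which is immediate from the cocycle relation $\sigma(gh)=\sigma(g)+\mathrm{Ad}_{g^{-1}}^\ast\sigma(h)$ of Proposition \ref{cocycle} upon setting $g=h=e$. Next, differentiating this relation in its second slot, I obtain for each fixed $\eta$ the differential equation
\[\frac{d}{dt}\sigma(\exp(t\eta))=\mathrm{Ad}_{\exp(-t\eta)}^\ast\,\delta\sigma(\eta),\]
which follows by writing $\sigma(\exp(t\eta)\exp(s\eta))=\sigma(\exp(t\eta))+\mathrm{Ad}_{\exp(-t\eta)}^\ast\sigma(\exp(s\eta))$ and differentiating at $s=0$. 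Since $\delta\sigma(\eta)=0$, the right-hand side vanishes, so $t\mapsto\sigma(\exp(t\eta))$ is constant; combined with $\sigma(e)=0$ this yields $\sigma(\exp(t\eta))=0$ for all $t\in\R$ and all $\eta\in\g$, i.e.\ $\sigma$ vanishes on the image of the exponential map.

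Finally, I would propagate this vanishing to all of $G$ using connectedness. Because $G$ is connected, $\exp(\g)$ generates $G$, so every $g\in G$ can be written as a finite product $g=\exp(t_1\eta_1)\cdots\exp(t_m\eta_m)$. The cocycle relation shows that if $\sigma(g_1)=\sigma(g_2)=0$ then $\sigma(g_1g_2)=\sigma(g_1)+\mathrm{Ad}_{g_1^{-1}}^\ast\sigma(g_2)=0$, so by induction $\sigma$ vanishes on every such product, hence on all of $G$. Therefore $\sigma\equiv 0$, which is exactly the statement that $f$ is equivariant. The main obstacle is this converse direction, and specifically the passage from vanishing on one-parameter subgroups to vanishing on all of $G$: it is precisely here that connectedness is indispensable, since otherwise $\sigma$ could be nonzero on a component of $G$ not reached through the exponential map.
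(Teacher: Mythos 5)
Your proposal is correct and takes essentially the same approach as the paper, whose entire proof is the one-line remark that the equivalence is ``clear since $\Sigma$ is just the derivative of $\sigma$.'' You have simply supplied the details that the paper leaves implicit --- the identity $\delta\sigma(\eta)(\xi)=\Sigma(\xi,\eta)$, the vanishing of $\sigma$ along one-parameter subgroups via the differentiated cocycle relation, and the propagation to all of $G$ using that $\exp(\g)$ generates a connected group.
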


\begin{proof}
This is clear since $\Sigma$ is just the derivative of $\sigma$.
\end{proof}

Since we will always be working with connected Lie groups, we will abuse terminology and call a moment map equivariant if it satisfies equation (\ref{equivariant equation}) or (\ref{equivariant equation 2}).
\del{
\begin{proposition}\label{lie alg morph 1}
A pre-moment map is a Lie algebra morphism from $(\g,[\cdot,\cdot])$ to $(C^\infty(M)/\mathrm{closed},\{\cdot,\cdot\})$.
\end{proposition} 
\begin{proof}
Since $df([\xi,\eta])=d\{f(\xi),f(\eta)\}$ it follows that $f([\xi,\eta])-\{f(\xi),f(\eta)\}$ is a constant. Hence, for a pre-moment map it is always true that $f([\xi,\eta])=\{f(\xi),f(\eta)\}$ in the quotient space.
\end{proof}

\begin{proposition}\label{lie alg morph 2}
If $f$ is an equivariant moment map then $f$ is a Lie algebra morphism from $(\g,[\cdot,\cdot])$ to $(C^\infty(M),\{\cdot,\cdot\})$.
\end{proposition}
\begin{proof}
This follows from the above, since if $f$ is equivariant, then $\Sigma=0$.
\end{proof}

\begin{remark}
The converse to Proposition \ref{lie alg morph 2} is not necessarily true. That is, if a moment map is a Lie algebra morphism, then it is not necessarily equivariant. This is true, however, if the acting group is compact and connected. Nonetheless, we will abuse terminology and call a moment map equivariant if it is a Lie group homomorphism.
\end{remark}
}

Now we turn our attention towards the multisymplectic setting. As in symplectic geometry, the infinitesimal equivariance of a weak moment map comes from differentiating $\psi_{\exp(t\xi),p}$ for fixed $\xi\in\g$ and $p\in\Rho_{\g,k}$.

\begin{proposition}\label{comp of Sigma} Let $\Sigma_k$ denote $\left.\frac{d}{dt}\right|_{t=0}\psi_{\exp(t\xi),p}$. Then we have that $\Sigma_k$ is a map from $\g$ to $\Rho_{\g,k}^\ast\otimes \Omega^{n-k}_{\mathrm{cl}}$ and is given by \[\Sigma_k(\xi,p)=f_k([\xi,p])+L_{V_\xi}f_k(p).\]

\end{proposition}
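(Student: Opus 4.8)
The plan is to differentiate the defining expression \eqref{ms equiv} directly at $g=\exp(t\xi)$, tracking the two distinct sources of $t$-dependence separately. Substituting $g=\exp(t\xi)$, $g^{-1}=\exp(-t\xi)$ into \eqref{ms equiv} gives
\[\psi^k_{\exp(t\xi),p}=f_k(p)-\Phi_{\exp(-t\xi)}^\ast\bigl(f_k(\mathrm{Ad}_{\exp(-t\xi)}(p))\bigr),\]
so $\Sigma_k(\xi,p)$ is minus the $t$-derivative at $t=0$ of the second term, the term $f_k(p)$ being constant. That second term is a $t$-dependent pullback operator $\Phi_{\exp(-t\xi)}^\ast$ applied to a $t$-dependent form $f_k(\mathrm{Ad}_{\exp(-t\xi)}(p))$, so I would apply the Leibniz rule, differentiating one factor at a time and evaluating the other at $t=0$ (where the pullback is the identity).

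First I differentiate only the argument. Linearity of $f_k$ gives $\left.\frac{d}{dt}\right|_{t=0}f_k(\mathrm{Ad}_{\exp(-t\xi)}(p))=f_k\!\left(\left.\frac{d}{dt}\right|_{t=0}\mathrm{Ad}_{\exp(-t\xi)}(p)\right)$, and the standard identity $\left.\frac{d}{dt}\right|_{t=0}\mathrm{Ad}_{\exp(t\xi)}(p)=\mathrm{ad}_\xi(p)$ together with the identification $\mathrm{ad}_\xi(p)=[\xi,p]$ from the proof of Corollary \ref{ad} gives $\left.\frac{d}{dt}\right|_{t=0}\mathrm{Ad}_{\exp(-t\xi)}(p)=-[\xi,p]$; Corollary \ref{ad} guarantees $[\xi,p]\in\Rho_{\g,k}$, so $f_k([\xi,p])$ is defined, and this contribution is $-f_k([\xi,p])$. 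Next I differentiate only the pullback operator, evaluating the argument at $t=0$: since $t\mapsto\Phi_{\exp(-t\xi)}$ is the flow of $-V_\xi$, the Lie-derivative-as-flow formula yields $\left.\frac{d}{dt}\right|_{t=0}\Phi_{\exp(-t\xi)}^\ast f_k(p)=-\L_{V_\xi}f_k(p)$. Summing the two Leibniz contributions and negating, the two minus signs from the inverse/flow direction cancel against the outer minus sign, and I obtain
\[\Sigma_k(\xi,p)=f_k([\xi,p])+\L_{V_\xi}f_k(p),\]
exactly as claimed.

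It remains to identify the codomain, i.e. that $p\mapsto\Sigma_k(\xi,p)$ lies in $\Rho_{\g,k}^\ast\otimes\Omega^{n-k}_{\mathrm{cl}}$. Linearity in $p$ is immediate from the formula (both $[\xi,\cdot]$ and $\L_{V_\xi}$ are linear, and $f_k$ is linear), so the only content is that the values are closed. I would obtain this for free: each $\psi^k_{\exp(t\xi),p}$ is closed by Proposition \ref{general closed}, and exterior differentiation commutes with $\frac{d}{dt}$, whence $d\Sigma_k(\xi,p)=\left.\frac{d}{dt}\right|_{t=0}d\psi^k_{\exp(t\xi),p}=0$.

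The main obstacle I anticipate is entirely a matter of sign bookkeeping rather than conceptual difficulty. Two places can absorb a sign: the inverse appearing in both $\Phi_{\exp(-t\xi)}^\ast$ and $\mathrm{Ad}_{\exp(-t\xi)}$, which replaces $\xi$ by $-\xi$ in the generating vector field and in the adjoint derivative; and the overall minus sign with which $\psi^k_{g,p}$ subtracts the pulled-back term. A careful application of the product rule, as above, shows these conspire to leave both final terms with a plus sign. As a consistency check I would verify that in the case $n=k=1$ the computation matches the symplectic calculation of $\Sigma(\xi,\eta)=f([\xi,\eta])-\{f(\xi),f(\eta)\}$ recalled just before Proposition \ref{inf cocycle}, tracing the identification of $\L_{V_\xi}f(\eta)$ with the Poisson term under the sign conventions of this paper.
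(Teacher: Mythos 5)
Your proof is correct and follows essentially the same route as the paper's: differentiate the defining formula for $\psi^k_{\exp(t\xi),p}$, note that $f_k(p)$ is constant, apply the Leibniz rule to the pullback operator and the adjoint argument separately, and combine $\left.\frac{d}{dt}\right|_{t=0}\mathrm{Ad}_{\exp(-t\xi)}p=-[\xi,p]$ with the flow formula for the Lie derivative so that the signs cancel against the outer minus. Your closing verification that the values land in $\Omega^{n-k}_{\mathrm{cl}}$ (via Proposition \ref{general closed} and commuting $d$ with $\frac{d}{dt}$) is a welcome addition that the paper's proof leaves implicit.
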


\begin{proof}We have that 
\begin{align*}
\left.\frac{d}{dt}\right|_{t=0}\psi_{\exp(t\xi),p}&=\left.\frac{d}{dt}\right|_{t=0} f_k(p)-\left.\frac{d}{dt}\right|_{t=0}\Phi_{\exp(-t\xi)}^\ast(f_k(\mathrm{Ad}_{\exp(-t\xi)}(p)))\\
&=-\left.\frac{d}{dt}\right|_{t=0}\Phi^\ast_{\exp(-t\xi)}(f_k(\mathrm{Ad}_{\exp(-t\xi)}p))\\
&=-f_k(\left.\frac{d}{dt}\right|_{t=0}\mathrm{Ad}_{\exp(-t\xi)}p)-(\left.\frac{d}{dt}\right|_{t=0}\Phi^\ast_{\exp(-t\xi)})(f_k(p))\\
&=-f_k(-[\xi,p])+L_{\xi_M}f_k(p)&\text{by Corollary \ref{ad}.}\\
\end{align*}

\end{proof}

\del{ 
The next corollary is a complete generalization of the situation in symplectic geometry. 

\begin{corollary}
If a refined homotopy moment map is equivariant, then it is a morphism between the differential graded Lie algebras $(\Lambda^\bullet\g, \partial,[\cdot,\cdot])$ and $(\Omega^\bullet_p(M),d,\{\cdot,\cdot\})$. (Change this is a little, this is true but it is not a corollary of the above)
\end{corollary}

\begin{proof}
If $(f)$ is equivariant, then $\sigma=0$ and so $\Sigma=0$. TO DO
\end{proof}

}

Let $R_k=\Rho_{\g,k}^\ast\otimes \Omega^{n-k}_\text{cl}$. Then $R_k$ is a $\g$-module under the induced action from the tensor product of the adjoint and Lie derivative actions. Concretely, for $\alpha\in R_k$, $\xi\in\g$ and $p\in\Rho_{\g,k}$, \[(\xi\cdot\alpha)(p)=\alpha([\xi,p])+L_{V_\xi}\alpha.\]
Consider the cohomology complex \[R_k\to C^1(\g,R_k)\to C^2(\g,R_k)\to\cdots,\]where the differential is the usual one from equation (\ref{group differential 2}).

The following is a generalization of Proposition \ref{inf cocycle}.

\begin{proposition}\label{general inf}
The map $\Sigma_k$ is in the kernel of $\partial_k$. That is, $\Sigma_k$ is a cocycle. \end{proposition}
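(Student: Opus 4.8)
The plan is to exhibit $\Sigma_k$ as a coboundary and then invoke $\delta^2=0$. Concretely, I must show that $\Sigma_k$ lies in the kernel of the Lie algebra differential $\delta_1$ of equation (\ref{group differential 2}), i.e. that
\[
(\delta_1\Sigma_k)(\xi_1,\xi_2) = \xi_1\cdot\Sigma_k(\xi_2) - \xi_2\cdot\Sigma_k(\xi_1) - \Sigma_k([\xi_1,\xi_2]) = 0
\]
for all $\xi_1,\xi_2\in\g$, where $\xi\cdot(-)$ is the $\g$-action on $R_k$. The central observation is that, by Proposition \ref{comp of Sigma}, $\Sigma_k$ is precisely $f_k$ pushed through the degree-zero differential of the same complex: extending the action formula verbatim to the larger module $R'_k := \Rho_{\g,k}^\ast\otimes\Omega^{n-k}_{\mathrm{Ham}}(M)$, Proposition \ref{comp of Sigma} reads
\[
\Sigma_k(\xi)(p) = f_k([\xi,p]) + \L_{V_\xi}f_k(p) = (\xi\cdot f_k)(p),
\]
that is, $\Sigma_k = \delta_0 f_k$ with $f_k\in R'_k$.

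Granting that $R'_k$ is a genuine $\g$-module, the cochain complex $R'_k\to C^1(\g,R'_k)\to C^2(\g,R'_k)\to\cdots$ satisfies $\delta_1\delta_0=0$, and hence $\delta_1\Sigma_k = \delta_1\delta_0 f_k = 0$ at once. Since $\Sigma_k$ in fact takes values in the submodule $R_k=\Rho_{\g,k}^\ast\otimes\Omega^{n-k}_{\mathrm{cl}}\subseteq R'_k$ — its components are the $t$-derivatives at $t=0$ of the closed forms $\psi^k_{\exp(t\xi),p}$ of Proposition \ref{general closed}, and a derivative of closed forms is closed — the identity $\delta_1\Sigma_k=0$ descends to the subcomplex, which is the assertion.

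Thus the real work is to confirm that $(\xi\cdot\alpha)(p)=\alpha([\xi,p])+\L_{V_\xi}\alpha(p)$ makes $R'_k$ a $\g$-module. Each summand is well defined: $\mathrm{ad}_\xi$ preserves $\Rho_{\g,k}$ by Corollary \ref{ad}, and $\L_{V_\xi}$ preserves $\Omega^{n-k}_{\mathrm{Ham}}(M)$ since $d\alpha=-X_\alpha\hk\omega$ gives $d(\L_{V_\xi}\alpha)=\L_{V_\xi}(d\alpha)=-[V_\xi,X_\alpha]\hk\omega$, using $\L_{V_\xi}\omega=0$, so that $\L_{V_\xi}\alpha$ is again Hamiltonian. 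The representation identity $[\xi_1,\xi_2]\cdot\alpha = \xi_1\cdot(\xi_2\cdot\alpha)-\xi_2\cdot(\xi_1\cdot\alpha)$ then follows by expanding both sides into three families of terms: the purely algebraic ones collapse through the Jacobi identity in $\g$ applied to $[\xi_2,[\xi_1,p]]-[\xi_1,[\xi_2,p]]$, the mixed terms $\L_{V_{\xi_i}}\alpha([\xi_j,p])$ cancel pairwise, and the two second-order Lie derivatives combine as $[\L_{V_{\xi_1}},\L_{V_{\xi_2}}]\alpha(p)=\L_{[V_{\xi_1},V_{\xi_2}]}\alpha(p)$, which is matched to the $\L_{V_{[\xi_1,\xi_2]}}$ term through the compatibility of the infinitesimal-generator map with brackets (Proposition \ref{adjoint over wedge}).

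I expect the main obstacle to be precisely this last bookkeeping: the coadjoint sign on $\Rho_{\g,k}^\ast$ and the sign relating $V_{[\xi_1,\xi_2]}$ to $[V_{\xi_1},V_{\xi_2}]$ must be chosen compatibly, so that the algebraic and analytic contributions carry matching signs and the module axiom closes. If one prefers to avoid the abstract framing, the identical computation can be performed directly on $(\delta_1\Sigma_k)(\xi_1,\xi_2)(p)$ by substituting the formula of Proposition \ref{comp of Sigma} into all three terms; the resulting expression sorts into the very same three families and vanishes for the same reasons, requiring no input beyond Proposition \ref{comp of Sigma}, Corollary \ref{ad}, the multisymplectic condition $\L_{V_\xi}\omega=0$, and the Jacobi identity.
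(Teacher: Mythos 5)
Your proof is correct, and its primary route is organized genuinely differently from the paper's. The paper proves Proposition \ref{general inf} by brute force: it expands $\partial\Sigma_k(\xi,\eta)(p)$ via Proposition \ref{comp of Sigma} into exactly the three families you name, and kills them with the Jacobi identity, pairwise cancellation of the mixed terms, and the Lie-derivative commutator identity together with the bracket-compatibility of $\xi\mapsto V_\xi$ --- in other words, the paper's proof is precisely the ``direct computation'' you offer as a fallback in your last paragraph, carried out in full. What you do differently is to factor that computation conceptually: $\Sigma_k=\delta_0 f_k$, with $f_k$ regarded as a $0$-cochain valued in the larger module $\Rho_{\g,k}^\ast\otimes\Omega^{n-k}_{\mathrm{Ham}}(M)$, so that cocycleness is the instance $\delta_1\delta_0=0$ of $\delta^2=0$, the only substantive input being the module axiom; the descent to closed-form coefficients is handled correctly. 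This packaging buys two things the paper's proof does not: it explains \emph{why} $\Sigma_k$ is a cocycle (it is a coboundary once non-closed coefficients are allowed, which is exactly why $[\Sigma_k]$ measures the failure of equivariance), and it surfaces a hypothesis the paper uses silently, namely that $\Rho_{\g,k}^\ast\otimes\Omega^{n-k}_{\mathrm{Ham}}$ and $\Rho_{\g,k}^\ast\otimes\Omega^{n-k}_{\mathrm{cl}}$ are honest $\g$-modules, without which the paper's cochain complexes do not satisfy $\delta^2=0$ in the first place. The cost is that verifying the module axiom is word-for-word the paper's expansion with $f_k$ replaced by a general $\alpha$, so no computation is actually avoided. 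Finally, the sign issue you flag is real, but it is a defect of the paper rather than of your argument: with the action literally as written, $(\xi\cdot\alpha)(p)=\alpha([\xi,p])+\L_{V_\xi}\alpha(p)$, the algebraic summand is the transpose of $\mathrm{ad}_\xi$ rather than the coadjoint representation, and the axiom closes only under compatible choices (e.g.\ $\alpha\mapsto\alpha([p,\xi])$, the convention the paper itself adopts in its section on moment maps as morphisms, together with $V_{[\xi,\eta]}=[V_\xi,V_\eta]$); the paper's own computation compensates silently, using a plus sign on the $\Sigma_k([\xi,\eta])$ term in contradiction with equation (\ref{group differential 2}).
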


\begin{proof}We need to show that $\partial\Sigma_k=0$. Indeed, for $\xi,\eta\in\g$ and $p\in\Rho_{\g,k}$, we have that 
\begin{align*}
\partial\Sigma_k(\xi,\eta)(p)&=\xi\cdot(\Sigma_k(\eta)(p))-\eta\cdot(\Sigma_k(\xi)(p))+\Sigma_k([\xi,\eta])(p)&\text{by equation (\ref{group differential 2})}\\
&= \Sigma_k(\eta)(\mathrm{ad}_\xi(p))+\L_{V_\xi}(\Sigma_k(\eta)(p))-\Sigma_k(\xi)(\mathrm{ad}_\eta(p))\\
&\quad{}-\L_{V_\eta}(\Sigma_k(\xi)(p))+\Sigma_k([\xi,\eta])(p)\\
&=\Sigma_k(\eta)([\xi,p])+\L_{V_\xi}(\Sigma_k(\eta)(p))-\Sigma_k(\xi)([\eta,p])\\
&\quad{}-\L_{V_\eta}(\Sigma_k(\xi)(p))+\Sigma_k([\xi,\eta])(p)&\text{by definition of ad}\\
&=f_k([\eta,[\xi,p]])+\L_{V_\eta}f_k([\xi,p])+\L_{V_\xi}f_k([\eta,p])+\L_{V_\xi}\L_{V_\eta}f_k(p)\\
&\quad{}-f_k([\xi,[\eta,p]])-\L_{V_\xi}f_k([\eta,p])-\L_{V_\eta}f_k([\xi,p])-\L_{V_\eta}\L_{V_\xi}f_k(p)\\
&\quad{}+f_k([[\xi,\eta],p])-\L_{V_{[\xi,\eta]}}f_k(p)\\
&=f_k([\eta,[\xi,p]])-f_k([\xi,[\eta,p]])+f_k([[\xi,\eta],p])\\
&\quad{}+\L_{V_\xi}\L_{V_\eta}f_k(p)-\L_{V_\eta}\L_{V_\xi}f_k(p)-\L_{V_{[\xi,\eta]}}f_k(p)\\
&=\L_{V_\xi}\L_{V_\eta}f_k(p)-\L_{V_\eta}\L_{V_\xi}f_k(p)-\L_{V_{[\xi,\eta]}}f_k(p)&\text{by the Jacobi identity}\\
&=0&\text{by the Lie derivative property}.
\end{align*}

\del{For $\xi,\eta\in\g$ and $p\in\Rho_{\g,k}$ we have by definition that $\partial\Sigma_k(\xi,\eta)(p)=\Sigma_k(\eta, ad_\xi(p))-\Sigma_k(\xi,ad_\eta(p))+\Sigma_k(\xi,\eta)(p)$. That is, showing that $\partial\Sigma_k=0$ is equivalent to showing that $\Sigma$ satisfies the Jacobi identity \[\Sigma_k([\eta,[\xi,p]])+\Sigma_k([\xi,[p,\eta]])+\Sigma_k([p,[\xi,\eta]])=0.\]

Indeed we have that

\begin{align*}
\Sigma_k([\eta,[\xi,p]])+&\Sigma_k([\xi,[p,\eta]])+\Sigma_k([p,[\xi,\eta]])=f_k([\eta,[\xi,p]]+[\xi,[p,\eta]]+[p,[\xi,\eta]])\\
&+\{f_1(\eta),f_k[\xi,p]\}+\{f_1(\xi),f_k[\eta,p]\}+\{f_1([\xi,\eta]),f_k(p)\}\\
&+d(\eta\hk f_k([\xi,p]))+d(\xi\hk f_k([\eta,p]))+d([\xi,\eta]\hk f_k(p))\\
&=\{f_1(\eta),f_k[\xi,p]\}+\{f_1(\xi),f_k[\eta,p]\}+\{f_1([\xi,\eta]),f_k(p)\}&\text{by Jacobi}\\
&+d(\eta\hk f_k([\xi,p]))+d(\xi\hk f_k([\eta,p]))+d([\xi,\eta]\hk f_k(p))\\
&=\{f_1(\eta),\{f_k(p),f_1\xi\}]\}+\{f_1(\xi),\{f_k(p),f_1(\xi)\}\}+\{\{f_1(\xi),f_1(\eta)\},f_k(p)\}\\
&+d(\eta\hk f_k([\xi,p]))+d(\xi\hk f_k([\eta,p]))+d([\xi,\eta]\hk f_k(p))\\
&=\L_\eta(f_k([\xi,p]))+\L_\xi(f_k([\eta,p]))+\L_{[\xi,\eta]}f_k(p)&\text{by prop \ref{Jacobi}}\\
&=0&\text{since $im(f)\subset\Omega^\bullet_{Sc}(M)$}
\end{align*}}

\end{proof}

As in symplectic geometry, we have that for a connected Lie group, a weak homotopy moment map is equivariant if and only if it is infinitesimally equivariant. That is, the weak homotopy $k$-moment map is equivariant if and only if $\sigma_k=0$ or $\Sigma_k=0$. A weak homotopy moment map is equivariant if $\sigma_k=0$ or $\Sigma_k=0$ for all $1\leq k\leq n$.

Now that we have generalized the notions of equivariance from symplectic to multisymplectic geometry, we move on to study the existence and uniqueness of these weak homotopy moment maps.
 
\section{Existence of Not Necessarily Equivariant Weak Moment Maps}

In this section we show how the results on the existence of not necessarily equivariant moment maps in symplectic geometry generalizes to multisymplectic geometry.

For a connected Lie group $G$ acting symplectically on a symplectic manifold $(M,\omega)$, recall the following standard results from symplectic geometry. 

\begin{proposition}\label{bracket gives}
For any $\xi,\eta\in\g$ we have \[[V_\xi,V_\eta]\hk\omega=d(V_\xi\hk V_\eta\hk\omega).\]
\end{proposition}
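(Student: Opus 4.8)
The plan is to derive the identity directly from Cartan's calculus, using only that the action is symplectic and that $\omega$ is closed. The two ingredients I would invoke are Cartan's magic formula $\L_X\tau = d(X\hk\tau)+X\hk d\tau$ for a vector field $X$, and the commutator identity relating the Lie derivative to the interior product, namely $[X,Y]\hk\tau = \L_X(Y\hk\tau)-Y\hk(\L_X\tau)$. This last identity is the $k=1$ case of the more general interaction between the Schouten bracket and interior products recorded in the excerpt (Lemma \ref{interior}/Proposition \ref{Lie of bracket}), but here one only needs the elementary vector-field version.

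First I would apply the commutator identity with $X=V_\xi$, $Y=V_\eta$ and $\tau=\omega$, obtaining
\[[V_\xi,V_\eta]\hk\omega = \L_{V_\xi}(V_\eta\hk\omega) - V_\eta\hk(\L_{V_\xi}\omega).\]
Because the action is symplectic, $\L_{V_\xi}\omega=0$, so the second term drops out, leaving $[V_\xi,V_\eta]\hk\omega=\L_{V_\xi}(V_\eta\hk\omega)$. Next I would expand the right-hand side with Cartan's formula:
\[\L_{V_\xi}(V_\eta\hk\omega) = d(V_\xi\hk V_\eta\hk\omega) + V_\xi\hk d(V_\eta\hk\omega).\]
To eliminate the last term I would write $d(V_\eta\hk\omega)=\L_{V_\eta}\omega - V_\eta\hk d\omega$ and observe that both summands vanish: $\L_{V_\eta}\omega=0$ since the action is symplectic, and $V_\eta\hk d\omega=0$ since $\omega$ is closed. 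Hence $V_\xi\hk d(V_\eta\hk\omega)=0$, and the asserted identity $[V_\xi,V_\eta]\hk\omega=d(V_\xi\hk V_\eta\hk\omega)$ follows at once.

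The computation carries no genuine obstacle; the only point demanding care is the bookkeeping of conventions. Specifically I would want to confirm that $V_\xi\hk V_\eta\hk\omega$ is read as $V_\xi\hk(V_\eta\hk\omega)$ and that the signs in the commutator identity $[X,Y]\hk(\cdot) = \L_X(Y\hk\cdot)-Y\hk\L_X(\cdot)$ match the paper's conventions for $\hk$ and $\L$. With those fixed every term is forced, and the proof reduces to a three-line application of Cartan's formula together with the hypotheses $\L_{V_\xi}\omega=0$ and $d\omega=0$.
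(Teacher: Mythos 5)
Your proof is correct. The paper actually states this proposition without proof, recalling it as a standard fact of symplectic geometry; your argument is precisely the $k=1$ specialization of the proof the paper gives for its multisymplectic generalization (Proposition \ref{existence 1}), where the extended Cartan lemma packages together your two ingredients (Cartan's formula and the identity $[X,Y]\hk\tau=\L_X(Y\hk\tau)-Y\hk(\L_X\tau)$) and the same two hypotheses $\L_{V_\xi}\omega=0$ and $d\omega=0$ kill the extra terms.
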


\del{
\begin{proposition}\label{H1}
We have that $H^1(\g)=[\g,\g]^0$, where $[\g,\g]^0$ is the annihilator of $[\g,\g]$.
\end{proposition}

\begin{proof}
This follows since for $c\in\g^\ast$ we have $\partial c(\xi,\eta)=c([\xi,\eta])$ by definition.
\end{proof}
}

\begin{proposition}\label{H2}
We have that $H^1(\g)=0$ if and only if $\g=[\g,\g]$.
\end{proposition}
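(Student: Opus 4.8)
The plan is to unwind the definition of $H^1(\g)$ directly from the Lie algebra cohomology complex with trivial real coefficients, as set up in equation (\ref{group differential 2}), and reduce the statement to an elementary fact about annihilators. Since the coefficient module is $R = \R$ with the trivial $\g$-action, the degree-zero differential $\delta_0 \colon \R \to C^1(\g,\R)$ satisfies $(\delta_0 r)(\xi) = \xi \cdot r = 0$, so $\delta_0$ is the zero map and $H^1(\g) = \ker \delta_1$.

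The key computation is to evaluate $\delta_1$ on $c \in C^1(\g,\R) = \g^\ast$. Using the $k=1$ formula recorded just after equation (\ref{group differential 2}), namely $\delta_1(c)(\xi,\eta) = \xi \cdot c(\eta) - \eta \cdot c(\xi) - c([\xi,\eta])$, and again invoking that the action is trivial, the first two terms vanish and we obtain
\[
\delta_1(c)(\xi,\eta) = -\,c([\xi,\eta]).
\]
Hence $c \in \ker \delta_1$ if and only if $c$ vanishes on every bracket $[\xi,\eta]$, i.e. if and only if $c$ lies in the annihilator $[\g,\g]^0 \subseteq \g^\ast$. This identifies $H^1(\g) = [\g,\g]^0$, exactly the content the deleted Proposition on $H^1(\g)$ was recording.

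Finally I would conclude with the standard linear-algebra fact that, for a finite-dimensional vector space $\g$ and a subspace $W \subseteq \g$, the annihilator $W^0$ is zero if and only if $W = \g$ (since $\dim W^0 = \dim \g - \dim W$). Applying this with $W = [\g,\g]$ gives $H^1(\g) = [\g,\g]^0 = 0$ precisely when $[\g,\g] = \g$, which is the claim. There is essentially no hard step here: the computation of $\delta_1$ is immediate, and the only point deserving care is the appeal to finite-dimensionality in the annihilator argument, which I would flag explicitly (the paper's Lie algebras are those of Lie groups, so this is harmless). The whole proof is therefore a one-line cohomology computation followed by the annihilator dimension count.
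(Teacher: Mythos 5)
Your proof is correct and follows essentially the same route the paper takes: it identifies $H^1(\g)$ with the annihilator $[\g,\g]^0$ by computing $\delta_1$ for trivial coefficients (exactly the computation the paper records, and reuses in its generalization, Proposition \ref{kernel equals bracket}, where $H^0(\g,\Rho_{\g,k}^\ast)=[\Rho_{\g,k},\g]^0$), and then concludes by the standard finite-dimensional annihilator dimension count. Nothing is missing; flagging the finite-dimensionality of $\g$ is a reasonable extra precaution.
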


and 

Combining these two propositions

\begin{proposition}\label{H4}
If $H^1(\g)=0$, then any symplectic action admits a moment map, which is not necessarily equivariant.
\end{proposition}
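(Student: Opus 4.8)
The plan is to show that for every $\xi \in \g$ the closed $1$-form $V_\xi \hk \omega$ is in fact exact; any primitive then serves as the value $f(\xi)$ of the moment map. First I would record that since the action is symplectic, Cartan's formula gives $0 = \L_{V_\xi}\omega = d(V_\xi \hk \omega) + V_\xi \hk d\omega = d(V_\xi \hk\omega)$, using $d\omega = 0$, so each $V_\xi \hk\omega$ is closed. This lets me define a linear map $\Psi \colon \g \to H^1_{\mathrm{dR}}(M)$ by $\Psi(\xi) = [V_\xi \hk\omega]$; the whole content of the proposition is that $\Psi$ vanishes identically.

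Next I would show that $\Psi$ annihilates $[\g,\g]$. For $\xi,\eta \in \g$ the infinitesimal generators satisfy $V_{[\xi,\eta]} = -[V_\xi, V_\eta]$ (the assignment $\xi \mapsto V_\xi$ is a Lie algebra anti-homomorphism), so by Proposition \ref{bracket gives} we obtain $V_{[\xi,\eta]} \hk\omega = -[V_\xi, V_\eta]\hk\omega = -d(V_\xi \hk V_\eta \hk\omega)$, which is exact. Hence $\Psi([\xi,\eta]) = 0$, and by linearity $\Psi$ vanishes on all of $[\g,\g]$.

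Now I would invoke Proposition \ref{H2}: the hypothesis $H^1(\g) = 0$ is equivalent to $\g = [\g,\g]$, so $\Psi \equiv 0$ and every $V_\xi \hk\omega$ is exact. To produce an honest linear moment map, I would fix a basis $\xi_1, \dots, \xi_m$ of $\g$, choose for each $i$ a function $f_i \in C^\infty(M)$ with $df_i = V_{\xi_i} \hk\omega$, and set $f(\sum_i a_i \xi_i) = \sum_i a_i f_i$. Linearity of both $\xi \mapsto V_\xi \hk\omega$ and $d$ then gives $df(\xi) = V_\xi \hk\omega$ for all $\xi$, as required; no equivariance is claimed or obtained. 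The only genuinely delicate point is the bookkeeping in the second paragraph—pinning down the sign convention for $V_{[\xi,\eta]}$ and confirming that $\Psi$ descends to a well-defined linear map on the de Rham quotient—after which the argument is essentially the cohomological reformulation of $\g = [\g,\g]$.
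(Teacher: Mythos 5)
Your proof is correct and takes essentially the same route as the paper: Proposition \ref{H4} is obtained there precisely by combining Proposition \ref{bracket gives} (bracket elements have exact contractions, with explicit primitive $V_\xi\hk V_\eta\hk\omega$) with Proposition \ref{H2} ($H^1(\g)=0$ iff $\g=[\g,\g]$), which is exactly what you do. The only cosmetic difference is that you package the argument through the class map $\Psi\colon\g\to H^1_{\mathrm{dR}}(M)$ and then choose arbitrary primitives over a basis, whereas the paper's construction simply takes the explicit contractions $V_\xi\hk V_\eta\hk\omega$ as the values of the moment map on brackets.
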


We now show how these results generalize to multisymplectic geometry. Let a connected Lie group act multisymplectically on an $n$-plectic manifold $(M,\omega)$.
\begin{proposition}\label{existence 1} 
For arbitrary $q$ in $\Rho_{\g,k}$ and $\xi\in\g$ we have that \[[V_q,V_\xi]\hk\omega= -(-1)^kd(V_q\hk V_\xi\hk\omega).\] 
\end{proposition}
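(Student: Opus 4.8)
The plan is to exploit the Extended Cartan Lemma (Lemma \ref{extended Cartan}) applied to the multivector field $V_q \wedge V_\xi = V_{q\wedge\xi}$, together with the key fact from Proposition \ref{wedge is Schouten} that $\partial(q\wedge\xi)=[q,\xi]$ whenever $q\in\Rho_{\g,k}$. The target identity looks exactly like a rearrangement of the Extended Cartan formula with $\tau=\omega$, so the strategy is to write out that lemma for the $(k+1)$-vector $q\wedge\xi$ and then kill all the terms that do not survive.

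First I would apply Lemma \ref{extended Cartan} to $p = q\wedge\xi$, a decomposable element of $\Lambda^{k+1}\g$, with $\tau=\omega$. This produces
\[
(-1)^{k+1} d(V_{q\wedge\xi}\hk\omega) = V_{\partial(q\wedge\xi)}\hk\omega + \sum_i (-1)^i(\cdots)\hk \L_{V_{\xi_i}}\omega + V_{q\wedge\xi}\hk d\omega.
\]
Now I would dispose of two terms immediately: the last term vanishes because $\omega$ is closed, and every summand in the middle sum vanishes because the action is multisymplectic, so $\L_{V_{\xi_i}}\omega=0$ for each generator $\xi_i$ of $q$ and for $\xi$. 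This collapses the formula to $(-1)^{k+1} d(V_{q\wedge\xi}\hk\omega) = V_{\partial(q\wedge\xi)}\hk\omega$. Applying Proposition \ref{wedge is Schouten} gives $\partial(q\wedge\xi)=[q,\xi]$, hence $V_{\partial(q\wedge\xi)} = V_{[q,\xi]}$, and since $V_{q\wedge\xi}=V_q\wedge V_\xi$ we obtain $(-1)^{k+1}d(V_q\wedge V_\xi\hk\omega) = V_{[q,\xi]}\hk\omega$.

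To finish I would relate $V_{[q,\xi]}$ to the Schouten bracket $[V_q,V_\xi]$ of the generated multivector fields. Here I expect the main (and really the only) obstacle: keeping the signs straight, and confirming that $V_{[q,\xi]} = -[V_q,V_\xi]$ under the convention in this paper (the computation sketched in the deleted proposition on differential graded structure shows exactly $V_{[p,q]}=-[V_p,V_q]$, coming from the extra sign when pushing the Lie bracket of individual generators through to vector fields). Combining this with $(-1)^{k+1}=-(-1)^k$ then yields $[V_q,V_\xi]\hk\omega = -(-1)^k\,d(V_q\hk V_\xi\hk\omega)$, as claimed. I would double-check the sign by testing the symplectic case $k=1$ against Proposition \ref{bracket gives}, where the statement should reduce to $[V_q,V_\xi]\hk\omega = d(V_q\hk V_\xi\hk\omega)$ (up to the sign conventions for $\omega$ there), which provides a useful sanity check that the bookkeeping is correct.
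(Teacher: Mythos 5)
Your strategy is exactly the paper's: its proof also applies Lemma \ref{extended Cartan} to the $(k+1)$-vector $q\wedge\xi$ with $\tau=\omega$, kills the Lie-derivative terms and the $d\omega$ term using multisymplecticity and closedness of $\omega$, identifies $\partial(q\wedge\xi)=[q,\xi]$ via Proposition \ref{wedge is Schouten}, and converts $V_{[q,\xi]}$ into $-[V_q,V_\xi]$. The only structural difference is cosmetic: the paper starts from $[V_q,V_\xi]\hk\omega=-V_{[q,\xi]}\hk\omega$ and computes the right-hand side, while you run the same chain forward.

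The one genuine problem is that your final sign does not follow from your own displayed equations. You correctly reach $(-1)^{k+1}d(V_q\hk V_\xi\hk\omega)=V_{[q,\xi]}\hk\omega$; substituting $V_{[q,\xi]}=-[V_q,V_\xi]$ then gives $[V_q,V_\xi]\hk\omega=-(-1)^{k+1}d(V_q\hk V_\xi\hk\omega)=+(-1)^{k}d(V_q\hk V_\xi\hk\omega)$, which is the \emph{negative} of the stated result; the closing assertion that ``combining with $(-1)^{k+1}=-(-1)^k$'' yields $-(-1)^k d(\cdots)$ is an arithmetic slip. To be fair, the paper's own proof is not immune: it writes the Extended Cartan prefactor as $(-1)^k$ rather than $(-1)^{k+1}$ when applying the lemma to the degree-$(k+1)$ element $q\wedge\xi$, and that is precisely how its chain lands on the stated sign. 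The underlying issue is that the three ingredients being combined --- the definition (\ref{differential g}) of $\partial$ (which on $\Lambda^2\g$ gives $\partial(\eta\wedge\xi)=-[\eta,\xi]$), Proposition \ref{wedge is Schouten} (which asserts $\partial(q\wedge\xi)=+[q,\xi]$), and the identification of the contraction $V_{q\wedge\xi}\hk\omega$ with the iterated contraction $V_q\hk V_\xi\hk\omega$ --- carry convention-dependent signs that are not mutually consistent as stated in the paper. Your proposed $k=1$ sanity check is exactly the right arbiter and settles the matter: the statement must reduce to Proposition \ref{bracket gives}, $[V_q,V_\xi]\hk\omega=d(V_q\hk V_\xi\hk\omega)$, so the proposition's sign is correct and the missing $(-1)$ has to be recovered inside one of the three ingredients (most plausibly in the ordering convention relating $V_{q\wedge\xi}\hk\omega$ to $V_q\hk V_\xi\hk\omega$), not by changing the target. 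In short: right approach, right lemmas, but as written your bookkeeping proves the negative of the statement, and the proof is only complete once that sign is actually located rather than asserted.
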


\begin{proof}
By linearity it suffices to consider decomposable $q=\eta_1\wedge\cdots\wedge\eta_k$. A quick computation shows that $[V_q,V_\xi]\hk\omega=-V_{[q,\xi]}\hk\omega$. It follows that

\begin{align*}
V_{[q,\xi]}\hk\omega&=V_{\partial({q\wedge\xi})}\hk\omega&\text{by Proposition \ref{wedge is Schouten}}\\
&=(-1)^kd(V_{q\wedge\xi}\hk\omega)-\sum_{i=1}^k(-1)^i\eta_1\wedge\cdots\wedge\widehat \eta_i\wedge\cdots\wedge 
\eta_k\wedge\xi\hk\L_{\eta_i}\omega -V_{q\wedge\xi}\hk d\omega&\text{by Lemma \ref{extended Cartan}}\\
&=(-1)^kd(V_q\hk V_\xi\hk\omega).\\
\end{align*}The claim now follows.
\end{proof}
The next proposition is a generalization of Proposition \ref{H2}.

\begin{proposition}\label{kernel equals bracket}
If $H^0(\g,\Rho_{\g,k}^\ast)=0$ then $\Rho_{\g,k}=[\Rho_{\g,k},\g]$.
\end{proposition}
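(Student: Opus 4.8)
The plan is to run the same duality argument that proves the symplectic statement (Proposition \ref{H2}), now with coefficients in the nontrivial $\g$-module $\Rho_{\g,k}^\ast$ rather than in $\R$. The first step is to unwind what $H^0(\g,\Rho_{\g,k}^\ast)=0$ actually says. Recall from the discussion following equation (\ref{group differential 2}) that $\delta_0\colon R\to C^1(\g,R)$ is $(\delta_0 r)(\xi)=\xi\cdot r$, so that $H^0(\g,R)=\ker\delta_0$ is precisely the space $R^\g$ of $\g$-invariant elements. Thus the hypothesis asserts that $\Rho_{\g,k}^\ast$ has no nonzero $\g$-invariant element.

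Next I would make the module structure explicit. By Corollary \ref{ad}, the map $p\mapsto\mathrm{ad}_\xi(p)=[\xi,p]$ preserves $\Rho_{\g,k}$, so $\Rho_{\g,k}$ is a $\g$-module and $\Rho_{\g,k}^\ast$ carries the contragredient action $(\xi\cdot\alpha)(p)=-\alpha([\xi,p])$. Under this action, $\alpha\in\Rho_{\g,k}^\ast$ is $\g$-invariant exactly when $\alpha([\xi,p])=0$ for all $\xi\in\g$ and $p\in\Rho_{\g,k}$, i.e.\ exactly when $\alpha$ annihilates the subspace $[\Rho_{\g,k},\g]\subseteq\Rho_{\g,k}$. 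Hence $H^0(\g,\Rho_{\g,k}^\ast)$ is canonically the annihilator of $[\Rho_{\g,k},\g]$ inside $\Rho_{\g,k}^\ast$.

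Finally I would conclude by finite-dimensional duality. Corollary \ref{ad} already gives $[\Rho_{\g,k},\g]\subseteq\Rho_{\g,k}$, so only the reverse inclusion remains. If $[\Rho_{\g,k},\g]$ were a proper subspace of the finite-dimensional space $\Rho_{\g,k}$, its annihilator would contain a nonzero functional $\alpha$; but by the previous step such an $\alpha$ lies in $H^0(\g,\Rho_{\g,k}^\ast)$, contradicting the hypothesis that this space vanishes. Therefore $[\Rho_{\g,k},\g]=\Rho_{\g,k}$. Setting $k=1$ recovers Proposition \ref{H2}, since then $\Rho_{\g,1}=\g$ and the conclusion reads $\g=[\g,\g]$.

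There is essentially no hard computation here; the single point requiring care is the bookkeeping of the $\g$-module structure on $\Rho_{\g,k}^\ast$ — verifying that it is the contragredient of $\mathrm{ad}$ and that the sign convention makes invariance equivalent to annihilating $[\Rho_{\g,k},\g]$ — together with the (implicit) finite-dimensionality of $\g$ that licenses the annihilator duality.
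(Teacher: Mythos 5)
Your proof is correct and takes essentially the same route as the paper's: identify $H^0(\g,\Rho_{\g,k}^\ast)$ with the annihilator $[\Rho_{\g,k},\g]^0$ of $[\Rho_{\g,k},\g]$ inside $\Rho_{\g,k}^\ast$, then conclude by duality that vanishing of the annihilator forces $[\Rho_{\g,k},\g]=\Rho_{\g,k}$. The only difference is one of explicitness: the paper stops at the identification $H^0(\g,\Rho_{\g,k}^\ast)=[\Rho_{\g,k},\g]^0$ and leaves the finite-dimensional duality step implicit, whereas you spell it out (along with the contragredient module structure), which is a fair completion of the same argument rather than a different approach.
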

\begin{proof}
By equation (\ref{group differential 2}), an element $c\in H^0(\g,\Rho_{\g,k}^\ast)$ satisfies $c([\xi,p])=0$ for all $\xi\in\g$.  That is, \[H^0(\g,\Rho_{\g,k}^\ast)=[\Rho_{\g,k},\g]^0,\] where $[\Rho_{\g,k},\g]^0$ is the annihilator of $[\Rho_{\g,k},\g]$.
\end{proof}
We now arrive at our main theorem on the existence of not necessarily equivariant weak moment maps. The following is a generalization of Proposition \ref{H4}.

\begin{theorem}\label{theorem existence}
Let $G$ act multisymplectically on $(M,\omega)$. If $H^0(\g,\Rho_{\g,k}^\ast\otimes\Omega^{n-k}_{\mathrm{cl}})=0$, and $H^0(\g,\Omega^{n-k}_{\mathrm{cl}})\not=0$, then the $k$-th component of a not necessarily equivariant moment map exists.
\end{theorem}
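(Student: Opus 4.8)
The plan is to reduce the statement to the case $H^0(\g,\Rho_{\g,k}^\ast)=0$, since for that hypothesis the construction via brackets is already available through Proposition \ref{kernel equals bracket} and Proposition \ref{existence 1}. So the whole content of the proof will be to show that the two hypotheses $H^0(\g,\Rho_{\g,k}^\ast\otimes\Omega^{n-k}_{\mathrm{cl}})=0$ and $H^0(\g,\Omega^{n-k}_{\mathrm{cl}})\neq 0$ together force $H^0(\g,\Rho_{\g,k}^\ast)=0$, and then to run the bracket construction.

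For the reduction I would use that the $\g$-action on $\Rho_{\g,k}^\ast\otimes\Omega^{n-k}_{\mathrm{cl}}$ obeys the Leibniz rule, so a product of invariants is invariant. Concretely, if $\ell\in H^0(\g,\Rho_{\g,k}^\ast)$ and $\mu\in H^0(\g,\Omega^{n-k}_{\mathrm{cl}})$, then for every $\xi\in\g$ one has $\xi\cdot(\ell\otimes\mu)=(\xi\cdot\ell)\otimes\mu+\ell\otimes\L_{V_\xi}\mu=0$, so $\ell\otimes\mu\in H^0(\g,\Rho_{\g,k}^\ast\otimes\Omega^{n-k}_{\mathrm{cl}})=0$. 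Using $H^0(\g,\Omega^{n-k}_{\mathrm{cl}})\neq 0$ I may fix one such $\mu\neq 0$; then $\ell\otimes\mu=0$ with $\mu\neq 0$ forces $\ell=0$. Since $\ell$ was arbitrary, $H^0(\g,\Rho_{\g,k}^\ast)=0$, exactly the hypothesis of the first part of the theorem.

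With $H^0(\g,\Rho_{\g,k}^\ast)=0$ established, Proposition \ref{kernel equals bracket} gives $\Rho_{\g,k}=[\Rho_{\g,k},\g]$, so every $p\in\Rho_{\g,k}$ is a finite sum $\sum_i[q_i,\xi_i]$ with $q_i\in\Rho_{\g,k}$ and $\xi_i\in\g$. By Proposition \ref{existence 1} each summand satisfies $V_{[q_i,\xi_i]}\hk\omega=(-1)^k d(V_{q_i}\hk V_{\xi_i}\hk\omega)$, hence $V_p\hk\omega$ is exact for every $p$; equivalently the linear map $D\colon\Rho_{\g,k}\to\Omega^{n-k+1}(M)$ given by $D(p)=-\zeta(k)V_p\hk\omega$ takes values in $d\,\Omega^{n-k}(M)$. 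I would then produce $f_k$ by a linear lift: $\Rho_{\g,k}$ is finite-dimensional and $d$ maps $\Omega^{n-k}(M)$ onto $d\,\Omega^{n-k}(M)$, so choosing a basis $p_1,\dots,p_m$ of $\Rho_{\g,k}$ and primitives $\alpha_i$ with $d\alpha_i=D(p_i)$, the assignment $f_k(\sum_i c_i p_i)=\sum_i c_i\alpha_i$ is linear and satisfies equation (\ref{hcmm kernel}). In particular each $f_k(p)$ is Hamiltonian with Hamiltonian multivector field $V_p$, so $f_k\colon\Rho_{\g,k}\to\Omega^{n-k}_{\mathrm{Ham}}(M)$ is the desired $k$-moment map.

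The only genuinely new step is the reduction, and the point to get right there is the equivariance of $\ell\otimes\mu$ together with the use of $\mu\neq 0$ to cancel it; this is precisely where the two hypotheses must interact, and it is essential that $\mu$ be both invariant and nonzero. I would also stress that building $f_k$ from a basis sidesteps the question of whether the formula $f_k([q,\xi])=-\zeta(k)(-1)^k V_q\hk V_\xi\hk\omega$ is independent of the decomposition of $p$ into brackets, which is the subtle ambiguity (a closed-form ambiguity, valued in $\Omega^{n-k}_{\mathrm{cl}}$) one would otherwise have to rule out directly.
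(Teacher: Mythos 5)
Your proof is correct, and its skeleton is the same as the paper's: reduce the two hypotheses to $H^0(\g,\Rho_{\g,k}^\ast)=0$, then combine Proposition \ref{kernel equals bracket} (giving $\Rho_{\g,k}=[\Rho_{\g,k},\g]$) with Proposition \ref{existence 1} (exactness of $V_{[q,\xi]}\hk\omega$). Where you genuinely differ is in how each step is justified, and both differences are improvements. For the reduction, the paper simply cites the K\"unneth formula (Theorem 3.6.3 of \cite{kunneth}), whereas you prove the one fact actually needed by hand: invariant $\ell$ tensored with invariant $\mu$ is invariant by the Leibniz rule, so $\ell\otimes\mu$ lies in $H^0(\g,\Rho_{\g,k}^\ast\otimes\Omega^{n-k}_{\mathrm{cl}})=0$, and a simple tensor with $\mu\neq0$ vanishes only if $\ell=0$; this is elementary, self-contained, and sidesteps any question of whether the cited homological-algebra theorem applies verbatim to this pair of $\g$-modules, one of which is infinite-dimensional. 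For the construction, the paper's proof says to define $f_k$ on elements of the form $[q,\xi]$ by $(-1)^kV_q\hk V_\xi\hk\omega$ and to use that every $p\in\Rho_{\g,k}$ is a sum of such elements; as literally stated this has exactly the well-definedness gap you identify, since a given $p$ admits many decompositions $p=\sum_i[q_i,\xi_i]$ and different decompositions yield candidate values differing by closed forms. Your basis-and-primitives lift extracts from Propositions \ref{kernel equals bracket} and \ref{existence 1} only the statement that $V_p\hk\omega$ is exact for every $p\in\Rho_{\g,k}$, and then produces a genuinely linear $f_k$ satisfying equation (\ref{hcmm kernel}); this is the cleanest way to close that gap. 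One cosmetic remark: with the paper's sign conventions the Hamiltonian multivector field of $f_k(p)$ is $\zeta(k)V_p$ rather than $V_p$, but since $\zeta(k)=\pm1$ this does not affect the conclusion that $f_k$ takes values in $\Omega^{n-k}_{\mathrm{Ham}}(M)$.
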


\begin{proof}

If $H^0(\g,\Rho_{\g,k}^\ast\otimes\Omega^{n-k}_{\mathrm{cl}})=0$ and $H^0(\g,\Omega^{n-k}_{\mathrm{cl}})\not=0,$  then $H^0(\g,\Rho_{\g,k}^\ast)=0$ by the Kunneth formula (see for example Theorem 3.6.3 of \cite{kunneth}). The claim now follows from Proposition \ref{kernel equals bracket} and Proposition \ref{existence 1}. Indeed, Proposition \ref{existence 1} says we may define a weak moment map on elements of the form $[p,\xi]$ by $(-1)^kV_p\hk V_\xi\hk\omega$, where $p\in\Rho_{\g,k}$ and $\xi\in\g$, and Proposition \ref{kernel equals bracket} says every element in $\Rho_{\g,k}$ is a sum of elements of this form.
\end{proof}

\begin{remark}
Notice that for the case $n=k$, it is always true that $H^0(\g,\Omega^{n-k}_{\mathrm{cl}})\not=0$ since any-non zero constant function is closed. Hence Theorem \ref{theorem existence} gives a generalization of Theorems 3.5 and 3.14 of \cite{ms} and \cite{MS} respectively. Moreover, by taking $n=k=1$, we see that we are obtaining a generalization from symplectic geometry.
\end{remark}

\begin{example}\label{example 1}
Consider the multisympletic manifold $(\R^4,\omega)$ where $\omega=\mathrm{vol}$ is the standard volume form. That is, we are working in the case $n=3$. Let $x_1,\cdots,x_4$ denote the standard coordinates. Let $G=U(2)$ act on $\R^4$ by rotations. The corresponding Lie algebra action generates the vector fields \[E_0=x^3\frac{\pd}{\pd x^1}+x^4\frac{\pd}{\pd x^2}-x^1\frac{\pd}{\pd x^3}-x^2\frac{\pd}{\pd x^4},\] \[E_1=x^3\frac{\pd}{\pd x^1}+x^4\frac{\pd}{\pd x^2}-x^1\frac{\pd}{\pd x^3}-x^2\frac{\pd}{\pd x^4},\] \[E_2=-x^2\frac{\pd}{\pd x^1}+x^1\frac{\pd}{\pd x^2}-x^4\frac{\pd}{\pd x^3}+x^3\frac{\pd}{\pd x^4},\] and \[E_3=x^4\frac{\pd}{\pd x^1}+x^3\frac{\pd}{\pd x^2}-x^2\frac{\pd}{\pd x^3}-x^1\frac{\pd}{\pd x^4}.\] For the case $k=2$, consider the distance function $r=\sqrt{x_1^2+x_2^2+x_3^2+x_4^2}$. It is clear that the distance function is invariant under rotations and hence $\L_{E_i}dr=0$ for $i=0,1,2,3$. Since $dr$ is a closed $1$-form, it follows that $dr$ is a non-zero element of $H^0(\g,\Omega^{1}_{\mathrm{cl}}(M))$. That is, $H^0(\g,\Omega^{1}_{\mathrm{cl}}(M))\not=0$.

For the case $k=1$, consider $\alpha:=dx^1\wedge dx^2+dx^3\wedge dx^4$. A quick calculuation shows that $E_i\hk\alpha=$ for $i=0,1,2,3$ so that $\alpha$ is invariant under the $\mathfrak{u}(2)$ action. Since $d\alpha=0$, it follows that $H^0(\g,\Omega^{2}_{\mathrm{cl}}(M))\not=0$ as well.

Hence, by Theorem \ref{theorem existence}, it follows that a weak moment map exists. 
\end{example}
The next example gives a scenario for which Theorem \ref{theorem existence} can only be applied to specific components of a weak moment map. 
\begin{example}Take the setup of Example \ref{example 1} but instead consider the action of $SO(4)$.  As in Example \ref{example 1}, $dr$ is a non-zero closed $1$-form which is invariant under the action. That is, $H^0(\g,\Omega^{1}_{\mathrm{cl}}(M))\not=0$. However, in this setup, $H^0(\g,\Omega^{2}_{\mathrm{cl}}(M))=0$. Indeed, the infinitesimal generators of $\mathfrak{so}(4)$ are of the form $x^i\frac{\pd}{\pd x^j}-x^j\frac{\pd}{\pd x^i}$ where $1\leq i,j,\leq 4$. An arbitrary $2$-form may be written as $\beta=\sum_{i,j}a_{ij}dx^i\wedge dx^j$. A computation shows that the condition $\L_{V_\xi}\beta=0$ for all $\xi\in\mathfrak{so}(4)$ showing that necessarily $\beta=0$. Hence  $H^0(\g,\Omega^{1}_{\mathrm{cl}}(M))=0$. 

It follows that, in this case, Theorem \ref{theorem existence} guarantees the existence of the $2$nd component of a weak moment map, but does not guarantee the existence of the $1$st.

\end{example}
Another generalization of Proposition \ref{H4} to multisymplectic geometry is given by:

\begin{proposition}\label{dont know}
If $H^k(\g)=0$, then the $k$-th component of a not necessarily equivariant weak moment map exists.
\end{proposition}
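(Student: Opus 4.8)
The plan is to reduce the hypothesis $H^k(\g)=0$ to a statement about the homology differential $\partial$, and then build $f_k$ by choosing a primitive via the Extended Cartan Lemma. First I would translate the cohomological hypothesis. Since $\g$ is finite-dimensional, the Chevalley--Eilenberg differential $\delta_k$ of equation (\ref{differential g dual}) is, up to sign, the transpose of the homology differential $\partial_{k+1}$ of equation (\ref{differential g}); hence the cochain complex $(\Lambda^\bullet\g^\ast,\delta)$ is the dual of the chain complex $(\Lambda^\bullet\g,\partial)$, and over a field cohomology is dual to homology. Consequently $H^k(\g)=0$ is equivalent to the vanishing of $H_k(\g)=\ker\partial_k/\operatorname{im}\partial_{k+1}=\Rho_{\g,k}/\operatorname{im}\partial_{k+1}$, i.e. to the surjectivity
\[\Rho_{\g,k}=\partial_{k+1}(\Lambda^{k+1}\g).\]
So every $p\in\Rho_{\g,k}$ is of the form $\partial q$ for some $q\in\Lambda^{k+1}\g$, where now $q$ ranges over \emph{all} of $\Lambda^{k+1}\g$, not merely a Lie kernel (this is what distinguishes the argument from Theorem \ref{theorem existence}).

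Next I would record the key identity. Applying the Extended Cartan Lemma (Lemma \ref{extended Cartan}) with $\tau=\omega$ to a decomposable $q=\xi_1\wedge\cdots\wedge\xi_{k+1}\in\Lambda^{k+1}\g$, the term $\sum_i(-1)^i(\cdots)\hk\L_{V_{\xi_i}}\omega$ vanishes because the action is multisymplectic, and $V_q\hk d\omega=0$ because $\omega$ is closed. What remains is
\[(-1)^{k+1}\,d(V_q\hk\omega)=V_{\partial q}\hk\omega,\]
and, since both sides are linear in $q$, this holds for all $q\in\Lambda^{k+1}\g$. In particular, for every $p\in\Rho_{\g,k}$ the $(n-k)$-form $V_p\hk\omega$ is exact.

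With this in hand, the construction is immediate. Because $\partial_{k+1}\colon\Lambda^{k+1}\g\to\Rho_{\g,k}$ is surjective, choose a linear section $s\colon\Rho_{\g,k}\to\Lambda^{k+1}\g$ with $\partial_{k+1}\circ s=\mathrm{id}$, and set
\[f_k(p):=\epsilon_k\,V_{s(p)}\hk\omega,\qquad \epsilon_k:=(-1)^{\frac{k(k+1)}{2}+k+1}.\]
This is linear in $p$, and the identity above gives $df_k(p)=\epsilon_k(-1)^{k+1}V_p\hk\omega=(-1)^{\frac{k(k+1)}{2}}V_p\hk\omega=-\zeta(k)V_p\hk\omega$, which is precisely equation (\ref{hcmm kernel}); the same equation exhibits $f_k(p)$ as a Hamiltonian form with $X_{f_k(p)}=\zeta(k)V_p$ (Definition \ref{ham k form}), so $f_k$ indeed takes values in $\Omega^{n-k}_{\mathrm{Ham}}(M)$.

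I expect the main subtlety to be well-definedness together with the homology/cohomology translation, not the computation. The naive recipe $f_k(\partial q):=\epsilon_k V_q\hk\omega$ is not well-posed, since $\partial q=\partial q'$ only forces $q-q'\in\Rho_{\g,k+1}$, and this does not force $V_q\hk\omega=V_{q'}\hk\omega$; passing to a fixed linear section $s$ is exactly what makes one consistent, linear global choice. The genuinely structural point is thus the duality identification $H^k(\g)=0\Leftrightarrow\Rho_{\g,k}=\operatorname{im}\partial_{k+1}$, which converts the Chevalley--Eilenberg cohomology hypothesis into the surjectivity of $\partial_{k+1}$ that the rest of the argument relies upon.
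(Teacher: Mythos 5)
Your proof is correct and follows essentially the same route as the paper's: both use $H^k(\g)=0$ to write every $p\in\Rho_{\g,k}$ as $\partial q$ for some $q\in\Lambda^{k+1}\g$, and then apply the Extended Cartan Lemma to $\omega$ to exhibit $V_p\hk\omega$ as exact and define $f_k(p)$ as the resulting primitive. Your two refinements --- making the homology/cohomology duality explicit and using a linear section of $\partial_{k+1}$ to get a well-defined linear $f_k$ --- tighten points the paper's terser proof glosses over (the paper also has a minor sign slip, writing $(-1)^k$ where the lemma applied in degree $k+1$ gives $(-1)^{k+1}$), but the underlying argument is identical.
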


\begin{proof}
If $H^k(\g)=0$ then $\Rho_{\g,k}=\mathrm{Im}(\partial_{k+1})$, since $\Rho_{\g,k}=\mathrm{ker}(\partial_k)$. But for $p\in\mathrm{Im}(\partial_{k+1})$ we have that $p=\partial q$ for some $q\in\Lambda^{k+1}\g$. Then by Lemma \ref{extended Cartan} we have 

\[V_p\hk\omega=(-1)^kd(V_q\hk\omega).\] Hence we may define $f_k(p)$ to be $(-1)^kV_q\hk\omega$.
\end{proof}

\begin{remark}
Proposition \ref{dont know} gives another generalization of the results of Madsen and Swann. Indeed, by taking $n=k$ we again arrive at Theorems 3.5 and 3.14 of \cite{ms} and \cite{MS} respectively.
\end{remark}

Summarizing Theorem \ref{theorem existence} and Proposition \ref{dont know} we obtain:

\begin{proposition}\label{theorem open question 1}
If $H^1(\g)=\cdots=H^n(\g)=0$ then a not necessarily equivariant weak moment map $(f)$ exists.
\end{proposition}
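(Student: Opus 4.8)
The plan is to obtain the full weak moment map by assembling its components one degree at a time, each supplied directly by Proposition \ref{dont know}. The essential structural observation is that a weak moment map is merely a collection $(f)=\{f_k\}_{k=1}^n$ of maps $f_k\colon\Rho_{\g,k}\to\Omega^{n-k}_{\mathrm{Ham}}(M)$, where each $f_k$ is required to satisfy equation (\ref{hcmm kernel}), namely $df_k(p)=-\zeta(k)V_p\hk\omega$. Unlike the defining relation (\ref{hcmm}) for an ordinary homotopy moment map, equation (\ref{hcmm kernel}) refers only to the single component $f_k$ and never couples $f_k$ to $f_{k-1}$. Consequently the components are entirely independent of one another, and building a full weak moment map is nothing more than building each $f_k$ separately.

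First I would fix an index $k$ with $1\le k\le n$. By hypothesis $H^k(\g)=0$, so Proposition \ref{dont know} applies and yields the $k$-th component $f_k$. Concretely, $H^k(\g)=0$ gives $\Rho_{\g,k}=\ker(\partial_k)=\mathrm{Im}(\partial_{k+1})$, so each $p\in\Rho_{\g,k}$ can be written as $p=\partial q$; Lemma \ref{extended Cartan} then shows $V_p\hk\omega=(-1)^k d(V_q\hk\omega)$, so the assignment $f_k(p):=(-1)^k V_q\hk\omega$ lands in $\Omega^{n-k}_{\mathrm{Ham}}(M)$ and satisfies (\ref{hcmm kernel}).

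Finally, letting $k$ range over $1\le k\le n$ and invoking the full hypothesis $H^1(\g)=\cdots=H^n(\g)=0$, I obtain every component $f_k$ simultaneously. Collecting them into $(f)=\{f_k\}_{k=1}^n$ produces a (not necessarily equivariant) weak moment map, as desired.

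Since the statement is a direct corollary of the already-proved Proposition \ref{dont know}, there is no genuine obstacle. The only point deserving a word of care is the well-definedness of each $f_k$: the preimage $q$ of $p$ under $\partial_{k+1}$ is not unique, so to make $f_k$ a single-valued linear map one fixes once and for all a linear section of $\partial_{k+1}$ over $\Rho_{\g,k}$ and uses it to choose $q=q(p)$. Because we are only asserting existence of a map that need not be equivariant, any such choice is acceptable, and this is precisely the freedom exploited in the proof of Proposition \ref{dont know}.
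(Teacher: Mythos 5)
Your proposal is correct and matches the paper's own (implicit) argument: the paper states this proposition as an immediate consequence of Proposition \ref{dont know}, applied componentwise for each $1\le k\le n$, precisely because equation (\ref{hcmm kernel}) never couples $f_k$ to $f_{k-1}$. Your added remark about fixing a linear section of $\partial_{k+1}$ to make each $f_k$ well defined is a sensible refinement of a point the paper leaves tacit, but it does not change the route.
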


\begin{theorem}
If, for all $1\leq k\leq n$,  $H^0(\g,\Rho_{\g,k}^\ast)=0$, or equivalently $H^0(\g,\Rho_{\g,k}^\ast\otimes\Omega^{n-k}_{\mathrm{cl}})=0$ and $H^0(\g,\Omega^{n-k}_{\mathrm{cl}})\not=0$, then a not necessarily equivariant weak moment map $(f)$ exists.
\end{theorem}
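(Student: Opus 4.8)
The plan is to obtain this combined statement as a degree-by-degree application of Theorem \ref{theorem existence}: that result already manufactures a single component $f_k$ under the given hypotheses, so the only additional work is to reconcile the two phrasings of the hypothesis and to observe that the resulting components assemble into a full weak moment map without interfering with one another.

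First I would reduce both forms of the hypothesis to the single condition $H^0(\g,\Rho_{\g,k}^\ast)=0$, which is the one actually consumed by the construction. The essential point, already used in the proof of Theorem \ref{theorem existence}, is that the degree-zero cohomologies are spaces of $\g$-invariants and one has the inclusion
\[H^0(\g,\Rho_{\g,k}^\ast)\otimes H^0(\g,\Omega^{n-k}_{\mathrm{cl}})\hookrightarrow H^0(\g,\Rho_{\g,k}^\ast\otimes\Omega^{n-k}_{\mathrm{cl}}),\]
so that vanishing of the right-hand side together with $H^0(\g,\Omega^{n-k}_{\mathrm{cl}})\neq0$ forces $H^0(\g,\Rho_{\g,k}^\ast)=0$. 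I would cite the Künneth formula (Theorem 3.6.3 of \cite{kunneth}) exactly as in Theorem \ref{theorem existence} to pin down this comparison; the upshot is that in either phrasing we may assume $H^0(\g,\Rho_{\g,k}^\ast)=0$ for every $k$ with $1\leq k\leq n$.

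Next, for each such $k$ I would invoke Theorem \ref{theorem existence} (equivalently, its underlying Propositions \ref{kernel equals bracket} and \ref{existence 1}) to produce a component $f_k:\Rho_{\g,k}\to\Omega^{n-k}_{\mathrm{Ham}}(M)$ with $df_k(p)=-\zeta(k)V_p\hk\omega$, namely equation (\ref{hcmm kernel}). Here $H^0(\g,\Rho_{\g,k}^\ast)=0$ yields $\Rho_{\g,k}=[\Rho_{\g,k},\g]$ via Proposition \ref{kernel equals bracket}, and Proposition \ref{existence 1} lets us set $f_k([p,\xi]):=(-1)^k V_p\hk V_\xi\hk\omega$ on bracket elements.

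Finally I would assemble $(f)=(f_1,\dots,f_n)$ and note it is a weak moment map. The decisive structural observation is that the defining relation (\ref{hcmm kernel}) constrains each $f_k$ in isolation, in contrast to the full homotopy relation (\ref{hcmm}) where the term $f_{k-1}(\partial p)$ couples adjacent degrees; hence no compatibility between components is required and the tuple is automatically a weak moment map. I expect the only real difficulty to sit inside the per-component construction already discharged by Theorem \ref{theorem existence} --- verifying that the prescription on $[p,\xi]$ is well defined and linear on all of $\Rho_{\g,k}$ rather than merely on decomposable brackets --- so that the present statement is genuinely a clean iteration of that theorem over $k$.
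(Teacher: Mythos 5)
Your proposal is correct and follows essentially the same route as the paper, which obtains this theorem simply by summarizing Theorem \ref{theorem existence} --- i.e., reducing either phrasing of the hypothesis to $H^0(\g,\Rho_{\g,k}^\ast)=0$ via the K\"unneth formula and then applying Propositions \ref{kernel equals bracket} and \ref{existence 1} --- once for each $1\leq k\leq n$. Your additional remarks (the invariants inclusion $H^0(\g,\Rho_{\g,k}^\ast)\otimes H^0(\g,\Omega^{n-k}_{\mathrm{cl}})\hookrightarrow H^0(\g,\Rho_{\g,k}^\ast\otimes\Omega^{n-k}_{\mathrm{cl}})$ behind the K\"unneth step, and the observation that equation (\ref{hcmm kernel}) imposes no coupling between components, so the $f_k$ assemble automatically) merely make explicit what the paper leaves implicit.
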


\del{

\begin{proposition}\label{existence 2} If $H_k(\g)=0$ then every element $p$ in $\Rho_{\g,k}$ is of the form \[p=\sum[q_i,\xi_i],\]where $q_i\in\Rho_{\g,k}$ and $\xi_i\in\g$. 
\end{proposition}

\begin{proof}
This fact follows from Theorem 3.5 of Madsen and Swann, although have question about their argument.

 Don't know if this helps but we have that $[\Rho_{\g,k},\g]$ is a subset of $\Rho_{\g,k}$. Let $[p,\xi]$ be an arbitrary element of $[\Rho_{\g,k},\g]$.
\begin{align*}
\partial(p\wedge\xi)&=\partial(p)\wedge\xi+p\wedge\partial(\xi)+[p,\xi]\\
&=p\wedge\partial(\xi)+[p,\xi]&\text{since $\partial(p)=0$}\\
&=[p,\xi]&\text{since $\xi$ has degree $1$}
\end{align*}
Thus, $[p,\xi]$ is closed.
\end{proof} 
Combining these two propositions gives us the main theorem of this section.
\begin{proposition}
Let $\g$ act multisymplectically on a manifold $M$. If $H_k(\g)=0$, then the $k$-th component, $f_k$, of a moment map exists. The map $f_k$ is not necessarily equivariant.
\end{proposition}

\begin{proof}
This follows directly from Propositions \ref{existence 1} and \ref{existence 2}.
\end{proof}

\begin{proposition}
Let $\g$ act multisymplectically on $(M,\omega)$. If $H^1(\g)=\cdots=H^n(\g)=$ then a not necessarily equivariant weak homotopy moment map exists.
\end{proposition}
}
In the next section we study when a non-equivariant weak moment map can be made equivariant.

\section{Obtaining an Equivariant Moment Map from a Non-Equivariant Moment Map}

In this section we show that the theory involved in obtaining an equivariant moment map from a non-equivariant moment map extends from symplectic to multisymplectic geometry. We first recall the standard results from symplectic geometry.

Proposition \ref{inf cocycle} shows that the map $\Sigma$ corresponding to a moment map $f$ is a Lie algebra $2$-cocycle. The next proposition says that if the cocycle is exact then $f$ can be made equivariant.

\begin{proposition}\label{if exact}
Let $f$ be a moment map and $\Sigma$ its corresponding cocycle. If $\Sigma=\partial(l)$ for some $l$, then $f+l$ is equivariant,
\end{proposition}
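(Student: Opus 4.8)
The plan is to exploit two elementary facts: that a $1$-cochain $l\in C^1(\g,\R)=\g^\ast$ takes \emph{constant} values on $M$, and that the Poisson bracket annihilates constants. First I would observe that since $l$ assigns to each $\xi\in\g$ the constant function $l(\xi)\in\R$, we have $d(l(\xi))=0$. Hence $d\bigl((f+l)(\xi)\bigr)=df(\xi)=V_\xi\hk\omega$, so $f+l$ is again a moment map, and it is legitimate to speak of its corresponding cocycle.

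Next I would compute the cocycle $\Sigma'$ attached to $f+l$ straight from the definition given before Proposition \ref{inf cocycle}, namely $\Sigma'(\xi,\eta)=(f+l)([\xi,\eta])-\{(f+l)(\xi),(f+l)(\eta)\}$. Because $l(\xi)$ and $l(\eta)$ are constants and the Poisson bracket of a constant with any function vanishes, the bracket term is unaffected: $\{(f+l)(\xi),(f+l)(\eta)\}=\{f(\xi),f(\eta)\}$. Expanding the first term linearly then gives $\Sigma'(\xi,\eta)=\Sigma(\xi,\eta)+l([\xi,\eta])$.

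The final step is to insert the hypothesis $\Sigma=\partial l$. Since the coefficient module $\R$ carries the trivial $\g$-action, the coboundary formula (\ref{group differential 2}) for $\delta_1$ collapses to $(\partial l)(\xi,\eta)=-l([\xi,\eta])$. Substituting yields $\Sigma'(\xi,\eta)=-l([\xi,\eta])+l([\xi,\eta])=0$, so $f+l$ is infinitesimally equivariant. As $G$ is connected, infinitesimal equivariance coincides with equivariance (as recorded just before Definition \ref{inf equiv moment} and in the proposition following it), and the claim follows.

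I do not expect a genuine obstacle here; the one point to watch is the sign produced by the coboundary operator with trivial coefficients — whether the convention gives $-l([\xi,\eta])$ or $+l([\xi,\eta])$. In either case the hypothesis $\Sigma=\partial l$ is precisely the condition needed to cancel the extra term $l([\xi,\eta])$ that $f+l$ acquires, so the argument reduces to the one-line cancellation above once the two observations (constants are $d$-closed, constants are Poisson-central) are in place.
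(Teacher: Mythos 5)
Your proof is correct and is essentially the paper's own argument: the paper recalls this proposition as a standard fact whose proof is the $n=k=1$ specialization of Proposition \ref{exact Sigma}, and that proof performs exactly your cancellation — expand $(f+l)([\xi,\eta])$, use the coboundary formula (equation (\ref{group differential 2}), which with trivial coefficients gives $(\partial l)(\xi,\eta)=-l([\xi,\eta])$) together with the definition of $\Sigma$, and pass from infinitesimal equivariance to genuine equivariance by connectedness of $G$. One small caution: your closing remark that the sign convention is immaterial is not quite accurate — under the opposite convention it would be $f-l$, not $f+l$, that becomes equivariant — but since the paper's convention does yield $-l([\xi,\eta])$, your main computation stands as written.
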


It follows from this that 
\begin{proposition}\label{obtain theorem}
If $H^2(\g)=0$ then one can obtain an equivariant moment map from a non-equivariant moment map.
\end{proposition}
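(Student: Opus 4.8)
The plan is to read this statement as the immediate payoff of the two preceding propositions, glued together by the definition of vanishing second cohomology. So I would begin with a (possibly non-equivariant) moment map $f:\g\to C^\infty(M)$, which the statement implicitly hands us, and form its associated cocycle $\Sigma$. By Proposition \ref{inf cocycle}, this $\Sigma$ is a Lie algebra $2$-cocycle in the complex $\R\to C^1(\g,\R)\to C^2(\g,\R)\to\cdots$. The whole argument then amounts to showing that the hypothesis $H^2(\g)=0$ forces $\Sigma$ into the exact case already treated by Proposition \ref{if exact}.

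Next I would invoke the hypothesis directly. Since $H^2(\g)=0$, by definition of the second cohomology group every $2$-cocycle in this complex is a coboundary; in particular the cocycle $\Sigma$ attached to $f$ is exact. Thus there exists a $1$-cochain $l\in C^1(\g,\R)=\g^\ast$ with $\Sigma=\partial(l)$, in the notation of Proposition \ref{if exact}. With $\Sigma$ now exhibited as $\partial(l)$, Proposition \ref{if exact} applies verbatim and tells us that the modified map $f+l$ is equivariant, where $l(\xi)\in\R$ is read as a constant function on $M$ and $(f+l)(\xi)=f(\xi)+l(\xi)$.

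The only point requiring any verification — and the only place a check is needed at all — is that $f+l$ remains a moment map, so that ``equivariant moment map'' is a meaningful conclusion. This is immediate: each $l(\xi)$ is a constant function, so $d\bigl((f+l)(\xi)\bigr)=df(\xi)=V_\xi\hk\omega$, and the defining equation of a moment map is untouched by adding constants componentwise. I do not anticipate a genuine obstacle here, since all the substantive content has been front-loaded into Propositions \ref{inf cocycle} and \ref{if exact}; the present statement is precisely the observation that the obstruction class $[\Sigma]\in H^2(\g)$ vanishes under the hypothesis, which is exactly what is needed to realize $\Sigma$ as the coboundary that Proposition \ref{if exact} consumes.
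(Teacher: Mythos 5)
Your proposal is correct and follows exactly the route the paper intends: the paper states this proposition as an immediate consequence of Proposition \ref{inf cocycle} (the map $\Sigma$ is a $2$-cocycle) and Proposition \ref{if exact} (if $\Sigma=\partial(l)$ then $f+l$ is equivariant), with $H^2(\g)=0$ supplying the exactness of $\Sigma$. Your added check that $f+l$ is still a moment map (since each $l(\xi)$ is constant) is the only detail the paper leaves implicit, and it is handled correctly.
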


Now let $G$ be a connected Lie group acting on an $n$-plectic manifold $(M,\omega)$. The following proposition generalizes Proposition \ref{if exact} to multisymplectic geometry.
\begin{proposition}\label{exact Sigma}
Let $f_k$ be the weak homotopy $k$-moment map, and let $\Sigma_k$ denote its corresponding cocycle. If $\Sigma_k=\partial(l_k)$ for some $l_k\in H^0(\g,\Rho_{\g,k}^\ast\otimes\Omega^{n-k}_{\mathrm{cl}})$, then $f_k+l_k$ is equivariant.
\end{proposition}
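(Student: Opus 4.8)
The plan is to carry the argument of the symplectic Proposition \ref{if exact} over to the infinitesimal multisymplectic setting, exactly as Proposition \ref{general make} does at the group level. Throughout I write $\partial$ for the Lie algebra differential (the map $\delta_0$) on the complex $R_k\to C^1(\g,R_k)\to\cdots$ with $R_k=\Rho_{\g,k}^\ast\otimes\Omega^{n-k}_{\mathrm{cl}}$.

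First I would check that $f_k+l_k$ is again a weak $k$-moment map, so that it even makes sense to attach a cocycle to it. Since $l_k$ takes values in closed forms, $d(f_k+l_k)(p)=df_k(p)=-\zeta(k)V_p\hk\omega$ for every $p\in\Rho_{\g,k}$, so equation (\ref{hcmm kernel}) still holds; moreover a closed form is Hamiltonian (with zero Hamiltonian multivector field), so $f_k+l_k$ still takes values in $\Omega^{n-k}_{\mathrm{Ham}}(M)$. Hence $f_k+l_k$ is a legitimate weak $k$-moment map.

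The key observation is that $\Sigma_k$ is itself a coboundary of $f_k$. Indeed, Proposition \ref{comp of Sigma} gives $\Sigma_k(\xi)(p)=f_k([\xi,p])+\L_{V_\xi}f_k(p)$, which is precisely $(\xi\cdot f_k)(p)$ for the induced $\g$-action on $\Rho_{\g,k}^\ast\otimes\Omega^{n-k}_{\mathrm{Ham}}$, i.e. $\Sigma_k=\partial f_k$. Since this formula is linear in the moment map, the cocycle attached to $f_k+l_k$ is $\partial(f_k+l_k)=\Sigma_k+\partial l_k$. Choosing the primitive so that $\partial l_k=-\Sigma_k$ — which is what the hypothesis $\Sigma_k=\partial l_k$ furnishes, after absorbing the sign coming from the cochain degree — makes this new cocycle vanish.

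Finally, a vanishing cocycle $\Sigma$ is exactly infinitesimal equivariance, and for a connected Lie group infinitesimal equivariance coincides with equivariance, as recorded just after Proposition \ref{general inf}; therefore $f_k+l_k$ is equivariant. The main point to get right is the sign bookkeeping in the third paragraph: unlike the symplectic Proposition \ref{if exact}, where the primitive $l$ is a degree-one cochain and $\delta_1 l(\xi,\eta)=-l([\xi,\eta])$ supplies the cancelling sign, here $l_k$ lives in cochain degree zero, so $\partial=\delta_0$ carries no internal sign on the relevant term, and one must verify that the primitive is combined with $f_k$ in the orientation (equivalently, replacing $l_k$ by $-l_k$ if necessary) for which $\Sigma_k+\partial l_k$ actually cancels.
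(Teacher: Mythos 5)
Your proof is correct and is essentially the paper's own argument in structural form: the identity $\Sigma_k=\partial f_k$ (Proposition \ref{comp of Sigma} read as a coboundary statement) together with linearity of $\delta_0$ is exactly the element-wise computation the paper performs on $(f_k+l_k)([\xi,p])$, followed by the same appeal to the equivalence of infinitesimal equivariance and equivariance for connected groups. Your sign caveat is also well founded: with the paper's stated conventions for the module action and for $\delta_0$, the hypothesis $\Sigma_k=\partial l_k$ literally makes $f_k-l_k$ the equivariant map, and the paper's proof reaches the stated conclusion for $f_k+l_k$ only through two compensating sign slips (in its application of equation (\ref{group differential 2}) and of the definition of $\Sigma_k$), so adjusting the orientation of $l_k$ as you propose is the right repair.
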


\begin{proof}
Fix $p\in\Rho_{\g,k}$ and $\xi\in\g$. Then

\begin{align*}
(f_k+l_k)([\xi,p])&=f_k([\xi,p])+l_k([\xi,p])\\
&=f_k([\xi,p])-((\partial l_k)(\xi))(p)+\L_{V_\xi}l_k(p)&\text{by equation (\ref{group differential 2})}\\
&=f_k([\xi,p])-\Sigma_k([\xi,p])+\L_{V_\xi}l_k(p)\\
&=\L_{V_\xi}f_k(p)+\L_{V_\xi}(l_k(p))&\text{by definition of $\Sigma_k$}\\
&=\L_{V_\xi}((f_k+l_k)(p)).
\end{align*}

\end{proof}
We now arrive at our generalization of Proposition \ref{obtain theorem}:

\begin{theorem}
If $H^1(\g,\Rho_{\g,k}^\ast\otimes\Omega^{n-k}_{\mathrm{cl}})=0$ then any weak $k$-moment map can be made equivariant. In particular, if $H^1(\g,\Rho_{\g,k}^\ast\otimes\Omega^{n-k}_{\mathrm{cl}})=0$ for all $1\leq k \leq n$, then any weak moment map $(f)$ can be made equivariant.
\end{theorem}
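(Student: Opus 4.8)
The plan is to obtain this theorem as an immediate consequence of Proposition \ref{general inf} and Proposition \ref{exact Sigma}, once the vanishing hypothesis is unpacked. Fix $k$ with $1\leq k\leq n$ and let $f_k$ be a weak $k$-moment map with associated map $\Sigma_k$. By Proposition \ref{general inf}, $\Sigma_k$ is a $1$-cocycle in the complex $R_k\to C^1(\g,R_k)\to C^2(\g,R_k)\to\cdots$, where $R_k=\Rho_{\g,k}^\ast\otimes\Omega^{n-k}_{\mathrm{cl}}$ carries the $\g$-module structure introduced before Proposition \ref{general inf}. The hypothesis $H^1(\g,R_k)=0$ means exactly that every $1$-cocycle in this complex is a $1$-coboundary, so there exists $l_k\in R_k$ with $\Sigma_k=\partial(l_k)$.

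Next I would apply Proposition \ref{exact Sigma} verbatim: since $\Sigma_k=\partial(l_k)$ with $l_k$ taking values in the closed forms $\Omega^{n-k}_{\mathrm{cl}}$, the corrected map $f_k+l_k$ is equivariant. I would record the small but necessary observation that $f_k+l_k$ is still a weak $k$-moment map. Indeed, for $p\in\Rho_{\g,k}$ the correction $l_k(p)$ is closed, so $d\big((f_k+l_k)(p)\big)=df_k(p)=-\zeta(k)V_p\hk\omega$ is unchanged, and equation (\ref{hcmm kernel}) continues to hold. Thus $f_k+l_k$ is an equivariant weak $k$-moment map, proving the first assertion.

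For the ``in particular'' statement I would use that the components $f_1,\dots,f_n$ of a weak moment map do not interact: each $f_k$ is defined on $\Rho_{\g,k}$ and lands in $\Omega^{n-k}_{\mathrm{Ham}}(M)$, and its equivariance is governed solely by its own cocycle $\Sigma_k$. Hence, assuming $H^1(\g,\Rho_{\g,k}^\ast\otimes\Omega^{n-k}_{\mathrm{cl}})=0$ for every $1\leq k\leq n$, I can carry out the correction above independently in each degree, replacing $f_k$ by $f_k+l_k$. The resulting collection $(f)+(l)=(f_1+l_1,\dots,f_n+l_n)$ is then an equivariant weak moment map.

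I do not expect a genuine obstacle here, since all the analytic and cohomological work is already contained in Propositions \ref{general inf} and \ref{exact Sigma}; the theorem is essentially their combination with the definition of $H^1=0$. The only points demanding any care are the two pieces of bookkeeping noted above: verifying that adding a closed-form-valued correction preserves the moment-map equation (\ref{hcmm kernel}), and checking that the mutual independence of the components lets the single-degree result assemble into the full equivariant weak moment map.
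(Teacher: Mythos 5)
Your proposal is correct and follows essentially the same route as the paper: invoke Proposition \ref{general inf} to see that $\Sigma_k$ is a $1$-cocycle, use $H^1(\g,\Rho_{\g,k}^\ast\otimes\Omega^{n-k}_{\mathrm{cl}})=0$ to write $\Sigma_k=\partial(l_k)$ for a $0$-cochain $l_k$, and apply Proposition \ref{exact Sigma} to conclude $f_k+l_k$ is equivariant, treating each degree $k$ independently for the full statement. Your two bookkeeping checks (that adding the closed-form-valued $l_k$ preserves equation (\ref{hcmm kernel}), and that the components assemble independently) are left implicit in the paper's proof but are correct and worth recording; note also that you correctly take $l_k\in R_k$ as a $0$-cochain, whereas the paper's wording ``$l_k\in H^0(\g,\Rho_{\g,k})$'' is a slip, since an invariant $l_k$ would force $\Sigma_k=\partial(l_k)=0$.
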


\begin{proof}
Let $f_k:\Rho_{\g,k}\to\Omega^{n-k}_\mathrm{Ham}$ be a weak $k$-moment map. If $H^1(\g,\Rho_{\g,k}^\ast\otimes\Omega^{n-k}_{\mathrm{cl}})=0$ then the corresponding cocycle $\Sigma_k$ is exact, i.e. $\Sigma_k=\partial(l_k)$ for some $l_k\in H^0(\g,\Rho_{\g,k})$. It follows from Proposition \ref{exact Sigma} that $f_k+l_k$ is equivariant.
\end{proof}

\del{

\begin{proposition}
For an element $m$ of degree $0$ in the complex \[(\R\to \Rho_{\g,k}^\ast\otimes\Omega^{n-k}_\text{cl})\to(\g\to\Rho_{\g,k}^\ast\otimes\Omega^{n-k}_\text{cl})\to (\g\times \g\to\Rho_{\g,k}^\ast\otimes\Omega^{n-k}_\text{cl})\to\cdots\] we  have that \[\delta(m)(\xi)(p)=m([\xi,p]).\]
\end{proposition}

\begin{proof}
Let $p=X_1\wedge\cdots\wedge X_k$ be an element of $\Rho_{\g,k}$. Recall that the Schouten bracket is defined by \[[p,\xi]:=\sum_{i=1}^n(-1)^i[X_i,\xi]\wedge X_1\cdots\widehat X_i\cdots\wedge X_n.\]By definition of the differential,

\begin{align*}
\delta m(\xi,p)&:=\sum_{i<j}(-1)^{i+j} m([X_i,X_j]X_1\cdots\widehat X_i\cdots\widehat X_j\cdots X_n)\\
&=\sum_{i=1}^n(-1)^im([X_i,\xi]\wedge X_1\cdots\widehat X_i\cdots\wedge X_n)&\text{since $p$ is in $\Rho_{\g,k}$}\\
&=m([p,\xi])
\end{align*}
\end{proof}

Now we come to one of the main results of this section.

\begin{theorem}
Suppose that we have a refined homotopy moment map $(f)$. Consider the $k$th component $f_k:\Rho_{\g,k}\to\Omega^{n-k}_S(M)$.  If $H^{k+1}(\g)=0$ then $f_k$ can be made equivariant.
\end{theorem}
\begin{proof}TO DO.

}
\del{
Wit the assumption that  $H_{k+1}(\g)=0$ then since $\delta\Sigma_k=0$, we have $\Sigma_k=\delta m_k$ for some $m_k\in \Rho_{\g,k}^\ast\otimes\Omega^{n-k}_\text{Sc}$. Since $m_k$ is closed, it follows that $d(f_k+m_k)=d(f_k)$ and so $f_k+m_k$ satisfies the moment properry. We show that $f_k + m_k$ is equivariant. Indeed

\begin{align*}
f_k([p,\xi])+m_k([p,\xi])&=f_k([p,\xi])+\Sigma(\xi,p)\\
&=\{f_k(p),f_1(\xi)\}+d(\xi\hk f_k(p))\\
&=\{f_k(p)+m_k(p),f_1(\xi)+m_k(p)\}+d(\xi\hk f_k(p))&\text{since $m_k(p)$ is closed}\\
&=\{f_k(p)+m_k(p),f_1(\xi)+m_k(p)\}+d(\xi\hk (f_k(p)+m_k(p))&\text{since $m_k(p)$ is a special form}
\end{align*}
Thus, the cocycle corresponding to $f_k+m_k$ vanishes showing that $f_k+m_k$ is equivariant.
\end{proof}

}

\del{

The way uniqueness is proved in the symplectic case is by noting that any two equivariant moment maps differ by something in $[\g,\g]^0=H^1(\g)$. Hence if $H^1(\g)=0$, any equivariant moment map is unique.  I have shown that in the Madsen and Swann setup, any two equivariant moment maps differ by something in $[\Rho_\g,\g]^0$ and if $H^2(\g)=0$ then this annhilator is zero and so moment maps are unique. (I believe this is a new proof for the existence and uniqueness of multi moment maps in the Madsenn and Swann setup)

We collect the results of this section into the following theorem. This generalizes the existence and uniqueness theorem of Madsen and Swann in \cite{ms} as our target spaces are now forms of arbitrary degree.

\begin{theorem}
If $H^k(\g)=0$ then the $k$th component of a refined homotopy moment map exists. This $k$th component is not necessarily equivariant; however, if it is, then it is unique. If $H^{k+1}(\g)=0$ then any non equivariant refined homotopy moment map can be made equivariant. Thus, if both $H^{k}(\g)=0$ and $H^{k+1}(\g)=0$ then an equivariant $kth$ component of a refined homotopy moment map exists and is unique.
\end{theorem}

We thus can say that

\begin{theorem} If $H^1(\g)=H^2(\g)=\cdots= H^{n+1}(\g)=0$ then a unique equivariant refined homotopy moment map exists.
\end{theorem}

\begin{question}
Given any $n\in \N$ does there exist a Lie group satisfying $H_1(\g)=\cdots= H_n(\g)=0$? In the language of Madsen and Swann, for any $n\in\N$, does there exist a $(k_1,\cdots,k_n)$-trivial Lie group?
\end{question}

}
\section{Uniqueness of Weak moment Maps}

We first recall the results from symplectic geometry without explicit proof. A proof can be found by setting $n=1$ (i.e. the symplectic case) in our more general Theorem \ref{dunno2}.
Let $\g$ be a Lie algebra acting on a symplectic manifold $(M,\omega)$.
\begin{proposition}\label{H5}
If $f$ and $g$ are two equivariant moment maps, then $f-g$ is in $H^1(\g)$.
\end{proposition}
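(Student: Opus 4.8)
The plan is to set $h = f - g$, show that $h$ is a constant-valued linear functional annihilating $[\g,\g]$, and then identify this annihilator with $H^1(\g)$.

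First I would subtract the two moment map equations. Since $df(\xi) = V_\xi \hk \omega$ and $dg(\xi) = V_\xi \hk \omega$ for every $\xi \in \g$, we obtain $d(h(\xi)) = 0$, so (assuming $M$ connected) each $h(\xi)$ is constant. This gives a linear map $h : \g \to \R$, i.e.\ an element $h \in \g^\ast = C^1(\g,\R)$.

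Next I would use equivariance, which is precisely where this hypothesis is indispensable. Both maps satisfy infinitesimal equivariance, $f([\xi,\eta]) = \{f(\xi),f(\eta)\}$ and $g([\xi,\eta]) = \{g(\xi),g(\eta)\}$. Because $f(\xi) = g(\xi) + h(\xi)$ with $h(\xi)$ constant, and the Poisson bracket vanishes whenever one of its arguments is constant (the Hamiltonian vector field of a constant is zero), we get $\{f(\xi),f(\eta)\} = \{g(\xi),g(\eta)\}$. Subtracting the two equivariance relations then yields $h([\xi,\eta]) = 0$ for all $\xi,\eta \in \g$; that is, $h \in [\g,\g]^0$, the annihilator of $[\g,\g]$ in $\g^\ast$.

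Finally I would identify $[\g,\g]^0$ with $H^1(\g)$. Working with the trivial $\g$-action on $\R$, the differential $\delta_0 : \R \to \g^\ast$ sends $r \mapsto (\xi \mapsto \xi \cdot r = 0)$, so it is the zero map and there are no nontrivial $1$-coboundaries. The cocycle condition $\delta_1 c = 0$ reads $-c([\xi,\eta]) = 0$, so the space of $1$-cocycles is exactly $[\g,\g]^0$. Hence $H^1(\g) = [\g,\g]^0$, and $f - g = h$ represents the desired class in $H^1(\g)$. The only genuine subtlety is the middle step: without equivariance $h([\xi,\eta])$ would instead equal the difference of the two constant non-equivariance terms $\Sigma_f - \Sigma_g$ rather than vanishing, so it is the equivariance assumption that promotes $h$ from a cocycle-up-to-constants to an honest cocycle.
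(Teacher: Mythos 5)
Your proof is correct and follows essentially the same route as the paper: both use the moment map equation to show $f-g$ has constant values, then use equivariance plus the vanishing of Poisson brackets with constants to conclude $(f-g)([\xi,\eta])=0$, identifying $f-g$ with an element of $[\g,\g]^0=H^1(\g)$. Your write-up is in fact slightly more careful than the paper's, since you spell out the identification $H^1(\g)=[\g,\g]^0$ (which the paper leaves implicit) and you avoid the paper's loosely stated intermediate claim that $f-g$ itself satisfies the morphism identity, arguing instead directly that $\{f(\xi),f(\eta)\}=\{g(\xi),g(\eta)\}$.
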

\begin{proof}
For $\xi,\eta\in\g$ we have that $(f-g)([\xi,\eta])=\{(f-g)(\xi),(f-g)(\eta)\}$ since $f$ and $g$ are equivariant. However, $(f-g)(\xi)$ is a constant function since both $f$ and $g$ are moment maps. The claim now follows since the Poisson bracket with a constant function vanishes.
\end{proof}
From Proposition \ref{H5} it immediately follows that
\begin{proposition}\label{H6}
If $H^1(\g)=0$ then equivariant moment moments are unique.
\end{proposition}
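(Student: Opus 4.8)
The plan is to derive this immediately from Proposition \ref{H5}, so the work is essentially already done. First I would let $f$ and $g$ be two equivariant moment maps for the given action and form the difference $f-g:\g\to C^\infty(M)$. Since both satisfy $df(\xi)=V_\xi\hk\omega=dg(\xi)$, the difference obeys $d(f-g)(\xi)=0$ for every $\xi\in\g$, so (taking $M$ connected) each $(f-g)(\xi)$ is a constant function. Thus $f-g$ may be regarded as a linear functional $\g\to\R$, i.e. an element of $C^1(\g)=\g^\ast$.

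Next I would invoke Proposition \ref{H5}, which guarantees that this $f-g$ is in fact a Lie algebra $1$-cocycle: the computation there gives $(f-g)([\xi,\eta])=\{(f-g)(\xi),(f-g)(\eta)\}=0$, since the Poisson bracket of constant functions vanishes, and vanishing on all brackets is precisely the cocycle condition $\delta_1(f-g)=0$ for trivial $\R$-coefficients. Hence $f-g$ represents a class in $H^1(\g)$, which is the content of Proposition \ref{H5}.

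Finally, under the hypothesis $H^1(\g)=0$, every $1$-cocycle is a $1$-coboundary; but with trivial coefficients the map $\delta_0:\R\to C^1(\g)$ is the zero map (since $\xi\cdot r=0$), so the only coboundary is $0$. Therefore $f-g=0$, i.e. $f=g$, which is the asserted uniqueness. I expect no real obstacle here: all the substance has been carried by Proposition \ref{H5}, and the present statement is just the remark that a cocycle whose class lives in a vanishing cohomology group must, for trivial scalar coefficients, be the zero cocycle itself.
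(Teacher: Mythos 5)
Your proposal is correct and follows the same route as the paper: Proposition \ref{H6} is obtained as an immediate consequence of Proposition \ref{H5}, with the difference $f-g$ landing in $H^1(\g)$ and hence vanishing. Your extra remark that, for trivial $\R$-coefficients, $\delta_0$ is zero so a cocycle in a vanishing $H^1(\g)$ must itself be zero is exactly the detail implicit in the paper's ``it immediately follows.''
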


The following is a generalization of Proposition \ref{H5}.
\begin{proposition}\label{uniqueness in H^0}
If $f_k$ and $g_k$ are $k$-th components of two equivariant weak moment maps, then $f_k-g_k$ is in $H^0(\g,\Rho_{\g,k}^\ast\otimes\Omega^{n-k}_{\mathrm{cl}})$.
\end{proposition}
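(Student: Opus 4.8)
The plan is to recognize that $H^0(\g,\Rho_{\g,k}^\ast\otimes\Omega^{n-k}_{\mathrm{cl}})$ is, by the definition of the Lie algebra differential $\delta_0$ in equation (\ref{group differential 2}), exactly the space of $\g$-invariant elements of $R_k:=\Rho_{\g,k}^\ast\otimes\Omega^{n-k}_{\mathrm{cl}}$: an element $h$ is a $0$-cocycle precisely when $(\delta_0 h)(\xi)=\xi\cdot h=0$ for every $\xi\in\g$. So the entire statement reduces to two verifications about $h:=f_k-g_k$: first, that $h$ genuinely lands in $R_k$, i.e. $h(p)\in\Omega^{n-k}_{\mathrm{cl}}$ for each $p\in\Rho_{\g,k}$; and second, that $h$ is annihilated by the $\g$-action.

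For the first point I would simply subtract the defining equations. Since $f_k$ and $g_k$ are both weak $k$-moment maps, equation (\ref{hcmm kernel}) gives $df_k(p)=-\zeta(k)V_p\hk\omega=dg_k(p)$, whence $d(h(p))=0$, so $h(p)$ is closed for every $p$. Thus $h\in R_k$ and is at least a candidate $0$-cochain.

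The key step is the second: showing $\xi\cdot h=0$. Here I would invoke the explicit formula for the $\g$-action on $R_k$ recorded just before Proposition \ref{general inf}, namely $(\xi\cdot h)(p)=h([\xi,p])+\L_{V_\xi}h(p)$. The point is that this is precisely the difference of the infinitesimal-equivariance maps of Proposition \ref{comp of Sigma}: writing $\Sigma_k^{f}$ and $\Sigma_k^{g}$ for the maps attached to $f_k$ and $g_k$, we have $\Sigma_k^{f}(\xi,p)=f_k([\xi,p])+\L_{V_\xi}f_k(p)$ and likewise for $g_k$, so by linearity $(\xi\cdot h)(p)=\Sigma_k^{f}(\xi,p)-\Sigma_k^{g}(\xi,p)$. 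Because $f_k$ and $g_k$ are both assumed equivariant, and equivariance of a weak $k$-moment map is equivalent, for connected $G$, to the vanishing of its cocycle $\Sigma_k$, both terms vanish, giving $(\xi\cdot h)(p)=0$ for all $\xi$ and $p$. Hence $h$ is $\g$-invariant and lies in $H^0(\g,\Rho_{\g,k}^\ast\otimes\Omega^{n-k}_{\mathrm{cl}})$, as claimed.

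I do not expect any serious obstacle, since the argument is a direct transcription of the symplectic proof of Proposition \ref{H5}, with ``constant-valued'' replaced by ``closed-form-valued'' and ``lies in $[\g,\g]^0$'' replaced by ``$\g$-invariant''. The only point requiring a moment of care is the bookkeeping that identifies $H^0(\g,R_k)$ with the invariants $R_k^{\g}$ and matches the action formula to $\Sigma_k$, so that the equivariance hypothesis feeds in cleanly; once that dictionary is in place the computation is immediate.
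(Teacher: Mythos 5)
Your proof is correct and takes essentially the same route as the paper's own: closedness of $(f_k-g_k)(p)$ comes from subtracting the two moment-map equations, and $\g$-invariance (i.e.\ membership in the kernel of $\delta_0$) comes from the equivariance of each map. If anything, your bookkeeping---writing $(\xi\cdot(f_k-g_k))(p)=\Sigma_k^{f}(\xi,p)-\Sigma_k^{g}(\xi,p)$ via the action formula preceding Proposition \ref{general inf} and Proposition \ref{comp of Sigma}---is more faithful to the paper's stated sign conventions than the paper's own one-line proof, which records equivariance as $(f_k-g_k)([\xi,p])=\L_{V_\xi}((f_k-g_k)(p))$ with the opposite sign.
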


\begin{proof}
If $f_k$ and $g_k$ are equivariant then $(f_k-g_k)([\xi,p])=\L_{V_\xi}((f_k-g_k)(p))$. Moreover, $(f_k-g_k)(p)$ is closed since both $f_k$ and $g_k$ are moment maps.
\end{proof}

We now arrive at our generalization of Proposition \ref{H6}. Let $\g$ be a Lie algebra acting on an $n$-plectic manifold $(M,\omega)$.
\begin{theorem}\label{dunno2}
If $H^0(\g,\Rho_{\g,k}^\ast\otimes\Omega^{n-k}_{\mathrm{cl}})=0$, then equivariant weak $k$-moment maps are unique. In particular, if $H^0(\g,\Rho_{\g,k}^\ast\otimes\Omega^{n-k}_{\mathrm{cl}})=0$ for all $1\leq k \leq n$ then equivariant weak moment maps are unique. 
\end{theorem}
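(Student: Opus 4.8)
The plan is to obtain this statement as an immediate consequence of Proposition \ref{uniqueness in H^0}, which already isolates all the structural content; the theorem itself is essentially a packaging step. First I would fix a multisymplectic action of $\g$ on $(M,\omega)$ and suppose it admits two equivariant weak $k$-moment maps $f_k$ and $g_k$, both maps $\Rho_{\g,k}\to\Omega^{n-k}_{\mathrm{Ham}}(M)$. The goal is to show that their difference $f_k-g_k$ vanishes identically.

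The core of the argument runs as follows. By Proposition \ref{uniqueness in H^0}, the difference $f_k-g_k$ lies in $H^0(\g,\Rho_{\g,k}^\ast\otimes\Omega^{n-k}_{\mathrm{cl}})$. Concretely, since both maps satisfy the moment equation (\ref{hcmm kernel}), the form $(f_k-g_k)(p)$ is closed for every $p\in\Rho_{\g,k}$, so $f_k-g_k$ genuinely defines an element of $\Rho_{\g,k}^\ast\otimes\Omega^{n-k}_{\mathrm{cl}}$ rather than merely of $\Rho_{\g,k}^\ast\otimes\Omega^{n-k}_{\mathrm{Ham}}$; and since both maps are equivariant, their cocycles $\Sigma_k$ (Proposition \ref{comp of Sigma}) vanish, which forces the difference to be annihilated by the induced $\g$-action, i.e.\ to be a $0$-cocycle. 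Invoking the standing hypothesis $H^0(\g,\Rho_{\g,k}^\ast\otimes\Omega^{n-k}_{\mathrm{cl}})=0$ then gives $f_k-g_k=0$, hence $f_k=g_k$, which is exactly uniqueness of the $k$-th component. For the final assertion, I would note that a full equivariant weak moment map $(f)$ is precisely the collection of its components $f_k$ for $1\le k\le n$, each individually equivariant and non-interacting across degrees; so if the vanishing hypothesis holds for every $k$, applying the preceding argument degree by degree determines each $f_k$ uniquely, and therefore the whole collection $(f)$ is unique.

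There is no genuine obstacle in the theorem once Proposition \ref{uniqueness in H^0} is available; the only place demanding care already lies inside that proposition. The subtlety there is the simultaneous bookkeeping that (i) the difference is closed, so that it lands in the coefficient module $\Omega^{n-k}_{\mathrm{cl}}$ built from closed forms, and (ii) the infinitesimal equivariance condition $\Sigma_k=0$ matches exactly the $0$-cocycle condition $\delta_0(f_k-g_k)=0$ for the tensor-product $\g$-module, with the signs in the formula $\Sigma_k(\xi,p)=f_k([\xi,p])+\L_{V_\xi}f_k(p)$ tracked correctly so that the two contributions cancel. Since both of these are handled upstream, the present proof reduces to the one-line cohomological vanishing argument above.
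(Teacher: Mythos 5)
Your proposal is correct and follows essentially the same route as the paper: it reduces the theorem to Proposition \ref{uniqueness in H^0} (the difference $f_k-g_k$ of two equivariant weak $k$-moment maps is a closed-form-valued invariant, hence lies in $H^0(\g,\Rho_{\g,k}^\ast\otimes\Omega^{n-k}_{\mathrm{cl}})$) and then applies the vanishing hypothesis to conclude $f_k=g_k$, degree by degree for the full moment map. The only difference is that you spell out the final vanishing step and the degreewise bookkeeping, which the paper leaves implicit.
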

\begin{proof}
If $f_k$ and $g_k$ are two equivariant weak $k$-moment maps, then Proposition \ref{uniqueness in H^0} shows that $f_k-g_k$ is in $H^0(\g,\Rho_{\g,k}^\ast\otimes\Omega^{n-k}_{\mathrm{cl}})$.

\end{proof}

\begin{remark}
This theorem gives a generalization of the results of Madsen and Swann. Indeed, by taking $n=k$ we again arrive at Theorems 3.5 and 3.14 of \cite{ms} and \cite{MS} respectively.
\end{remark}
\del{
We make two more observations.

\begin{proposition}\label{HH1}
If $f_k$ and $g_k$ are two components of an equivariant homotopy moment map $(f)$, then $f_k-g_k$ is in $[\Rho_{\g,k},\g]^0$. 
\end{proposition}
\begin{proof}
Let $[p,\xi]$ be an artbitrary element of $[\Rho_{\g,k},\g]$. Then we have that 
\begin{align*}
(f_k-g_k)([p,\xi])&=\{(f_k-g_k)(p),(f_1-g_1)(\xi)\}&\text{by equivariance}\\
&=0&\text{since $f_k(p)-g_k(p)$ is closed}
\end{align*}
\end{proof}
\begin{proposition}\label{HH2}
If $H^{k}(\g)=0$ then $[\Rho_{\g,k},\g]^0=0$
\end{proposition}

\begin{proof}
We have already shown that $\Rho_{\g,k}=[\Rho_{\g,k},\g]$. Since $\dim([\Rho_{\g,k},\g])+\dim([\Rho_{\g,k},\g])^0=\dim(\Rho_{\g,k})$ the claim now follows.
\end{proof}

Our generalization of Proposition \ref{H5} is:

\begin{proposition}
If $H^k(g)=0$ then the $k$-th components of homotopy moment maps are unique. 
\end{proposition}

\begin{proof}
This follows from Propositions \ref{HH1} and \ref{HH2}.
\end{proof}

Therefore, if an equivariant moment map exists and $H^k(\g)=0$, it is unique.
}

\section{Weak moment Maps as Morphisms}

Consider a symplectic action of a connected Lie group $G$ acting on a symplectic manifold $(M,\omega)$. Let $f:\g\to C^\infty(M)$ be a moment map. By Definition \ref{inf equiv moment}, $f$ is equivariant if and only if $f$ is a Lie algebra morphism from $(\g,[\cdot,\cdot])$ to $(C^\infty(M),\{\cdot,\cdot\})$. That is, if and only if \[f([\xi,\eta])=\{f(\xi),f(\eta)\}.\] Taking $d$ of both sides of this equation yields:
\begin{proposition}\label{morph 1}
A moment map $f$ induces a morphism onto the quotient of $C^\infty(M)$ by constant functions. That is, a moment map induces a Lie algebra morphism from $(\g,[\cdot,\cdot])$ to $(C^\infty(M)/\text{constant},\{\cdot,\cdot\})$, regardless of equivariance. If moreover, the moment map $f$ is equivariant, then $f$ is a morphism from $(\g,[\cdot,\cdot])$ to $(C^\infty(M),\{\cdot,\cdot\})$.

\end{proposition}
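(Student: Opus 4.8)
The plan is to reduce both assertions to the single differential identity $df([\xi,\eta]) = d\{f(\xi),f(\eta)\}$, which I would establish directly from the moment map equation. Granting this identity, the form $d\big(f([\xi,\eta]) - \{f(\xi),f(\eta)\}\big)$ vanishes, so on each connected component of $M$ the function $f([\xi,\eta]) - \{f(\xi),f(\eta)\}$ is constant; assuming $M$ connected (or arguing componentwise), this is precisely the statement that $f([\xi,\eta])$ and $\{f(\xi),f(\eta)\}$ agree in the quotient $C^\infty(M)/\text{constant}$. Since $f$ is linear by definition, this yields the desired Lie algebra morphism into the quotient.

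To prove the identity I would evaluate each side using the moment map condition $df(\xi) = V_\xi \hk \omega$. For the left side, the map $\xi \mapsto V_\xi$ is a Lie algebra anti-homomorphism, so $V_{[\xi,\eta]} = -[V_\xi,V_\eta]$, and hence $df([\xi,\eta]) = V_{[\xi,\eta]} \hk \omega = -[V_\xi,V_\eta]\hk\omega$; applying Proposition \ref{bracket gives} rewrites this as $-d(V_\xi \hk V_\eta \hk \omega)$. For the right side, the Hamiltonian vector fields of the functions $f(\xi)$ and $f(\eta)$ are exactly $V_\xi$ and $V_\eta$, so the Poisson bracket takes the form $\{f(\xi),f(\eta)\} = \pm V_\xi \hk V_\eta \hk \omega$, and taking $d$ reproduces the left side. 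This is the step where I expect the only real friction: pinning down the sign convention for $\{\cdot,\cdot\}$ (and for the anti-homomorphism) so that the two computations agree on the nose rather than merely up to sign.

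Before concluding the first part I would also verify that $\{\cdot,\cdot\}$ genuinely descends to $C^\infty(M)/\text{constant}$; this holds because the Poisson bracket of a constant with any function vanishes, so the constants form a central ideal and the induced bracket on the quotient is well defined. For the equivariant case no further computation is needed: by Definition \ref{inf equiv moment}, equivariance is exactly the vanishing $\Sigma(\xi,\eta) = f([\xi,\eta]) - \{f(\xi),f(\eta)\} = 0$, i.e. the constant obstruction identified above is identically zero. Thus an equivariant $f$ satisfies $f([\xi,\eta]) = \{f(\xi),f(\eta)\}$ without passing to the quotient, making it a Lie algebra morphism from $(\g,[\cdot,\cdot])$ into $(C^\infty(M),\{\cdot,\cdot\})$ itself.
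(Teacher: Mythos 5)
Your proposal is correct and follows essentially the same route as the paper: the paper also rests on the identity $df([\xi,\eta])=d\{f(\xi),f(\eta)\}$ (which it attributes to a quick computation, citing Theorem 4.2.8 of \cite{Marsden}), concludes that $\Sigma(\xi,\eta)=f([\xi,\eta])-\{f(\xi),f(\eta)\}$ is constant so that $f$ is a morphism into $C^\infty(M)/\text{constant}$, and then invokes Definition \ref{inf equiv moment} to identify equivariance with the vanishing of this constant. Your only addition is to make the differential identity explicit via Proposition \ref{bracket gives} and the anti-homomorphism $V_{[\xi,\eta]}=-[V_\xi,V_\eta]$, which is a fine substitute for the paper's citation.
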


We now restate Proposition \ref{morph 1} in an equivalent way, but which will allow for a direct generalization to multisymplectic geometry: Notice that $\g$ is a $\g$-module under the Lie bracket action and $C^\infty(M)$ is $\g$-module under the action $\xi\cdot\g=L_{V_{\xi}}g$, where $\xi\in\g$ and $g\in C^\infty(M)$. Proposition \ref{morph 1} is equivalent to:

\begin{proposition}\label{Morph 1}
A moment map  $f$ always induces a $\g$-module morphism from $\g$ to $C^\infty(M)/\text{constant}$. Moreover, if the moment map $f$ is equivariant, then it is a $\g$-module morphism from $\g$ to $C^\infty(M)$.
\end{proposition}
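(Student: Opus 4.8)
The plan is to show that Proposition \ref{Morph 1} is nothing more than Proposition \ref{morph 1} re-read once both the source and the target of $f$ are viewed as $\g$-modules, so the real work is organizing the dictionary between the two formulations. First I would fix the module structures explicitly: $\g$ carries the adjoint action $\xi \cdot \eta = [\xi,\eta]$, and $C^\infty(M)$ carries $\xi \cdot g = L_{V_\xi} g$; since $L_{V_\xi}$ kills constants it descends to an action on $C^\infty(M)/\text{constant}$, so both candidate target modules make sense. With these conventions a linear map $\phi$ is a $\g$-module morphism exactly when $\phi([\xi,\eta]) = L_{V_\xi}\phi(\eta)$ for all $\xi,\eta \in \g$. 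Taking $\phi = f$, the morphism condition reads $f([\xi,\eta]) = L_{V_\xi}f(\eta)$, and the whole statement reduces to comparing the right-hand side with the Poisson bracket $\{f(\xi),f(\eta)\}$ appearing in Proposition \ref{morph 1}.

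The heart of the argument is therefore the single bridging identity $L_{V_\xi}f(\eta) = \{f(\xi),f(\eta)\}$. I would obtain it from Cartan's formula: because $f(\eta)$ is a function, $L_{V_\xi}f(\eta) = V_\xi \hk df(\eta)$, and substituting the moment map equation $df(\eta) = V_\eta \hk \omega$ gives $L_{V_\xi}f(\eta) = V_\xi \hk V_\eta \hk \omega$, which is precisely $\{f(\xi),f(\eta)\}$ by the definition of the Poisson bracket. This identity turns the $\g$-module morphism condition $f([\xi,\eta]) = L_{V_\xi}f(\eta)$ and the Lie algebra morphism condition $f([\xi,\eta]) = \{f(\xi),f(\eta)\}$ into literally the same equation.

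With the bridge in place both halves of Proposition \ref{morph 1} transfer directly. The discrepancy $f([\xi,\eta]) - L_{V_\xi}f(\eta)$ equals $f([\xi,\eta]) - \{f(\xi),f(\eta)\}$, i.e. the obstruction $\Sigma(\xi,\eta)$, which is a constant function (as noted before Proposition \ref{inf cocycle}); hence in $C^\infty(M)/\text{constant}$ the morphism equation holds identically, giving the $\g$-module morphism onto the quotient with no equivariance hypothesis, while if $f$ is equivariant then $\Sigma = 0$ and the equation holds on the nose in $C^\infty(M)$. I expect the only genuinely delicate point to be the sign bookkeeping in the bridging identity: one must check that the normalizations chosen for $df(\xi) = V_\xi \hk \omega$ and for $\{\cdot,\cdot\}$ are compatible, so that $L_{V_\xi}f(\eta)$ and $\{f(\xi),f(\eta)\}$ coincide rather than differ by a sign. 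Everything else is a mechanical unwinding of the $\g$-module definitions.
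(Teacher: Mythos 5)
Your proposal is correct and takes essentially the same route as the paper: the paper gives no separate proof of Proposition \ref{Morph 1}, presenting it as an equivalent restatement of Proposition \ref{morph 1} once $\g$ carries the adjoint action and $C^\infty(M)$ the action $\xi\cdot g=L_{V_\xi}g$, which is exactly the dictionary you construct. Your bridging identity $L_{V_\xi}f(\eta)=\{f(\xi),f(\eta)\}$ (together with the sign-convention compatibility you rightly flag, given that the paper itself is loose about the sign in $df(\xi)=\pm V_\xi\hk\omega$) is precisely the fact the paper leaves implicit in asserting that equivalence.
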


Now let a connected Lie group $G$ act multisymplectically on an $n$-plectic manifold $(M,\omega)$. 

\begin{proposition}
For any $1\leq k\leq n$, we have that $\Rho_{\g,k}$ is a $\g$-module under the action $\xi\cdot p = [p,\xi]$, where $p\in\Rho_{\g,k}$, $\xi\in\g$, and $[\cdot,\cdot]$ is the Schouten bracket. 
\end{proposition}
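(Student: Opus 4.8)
The plan is to check the three conditions that make $\Rho_{\g,k}$ a (left) $\g$-module under $\xi \cdot p := [p,\xi]$: that the operation is well defined (i.e.\ lands back in $\Rho_{\g,k}$), that it is bilinear, and that it is compatible with the Lie bracket of $\g$, i.e.\ $[\xi,\eta]\cdot p = \xi\cdot(\eta\cdot p) - \eta\cdot(\xi\cdot p)$ for all $\xi,\eta\in\g$ and $p\in\Rho_{\g,k}$.

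First I would dispatch well-definedness, which is essentially already available. By Proposition \ref{wedge is Schouten}, $[p,\xi] = \partial(p\wedge\xi)$ whenever $p\in\Rho_{\g,k}$, so $[p,\xi]$ lies in the image of $\partial$; since $\partial^2=0$, it is also in $\ker\partial_k = \Rho_{\g,k}$. This is precisely the content of Corollary \ref{ad} (noting $\mathrm{ad}_\xi(p)=[\xi,p]=-[p,\xi]$). Bilinearity in $p$ and in $\xi$ is inherited verbatim from the bilinearity of the Schouten bracket, so this step needs no real work.

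The heart of the argument is the compatibility axiom. Unwinding the definition of the action, the required identity becomes a relation among the iterated Schouten brackets $[p,[\xi,\eta]]$, $[[p,\eta],\xi]$ and $[[p,\xi],\eta]$, which is an instance of the graded Jacobi identity satisfied by the Schouten bracket on $\Lambda^\bullet\g$. The mechanics are: apply the graded Jacobi identity to the triple $(p,\xi,\eta)$ and then use graded antisymmetry to reorder the inner and outer brackets into the orderings dictated by the action $\xi\cdot p = [p,\xi]$. I expect the sign bookkeeping to be the main (indeed the only) obstacle: $\xi$ and $\eta$ have degree one while $p$ has degree $k$, so the Koszul signs in both the Jacobi identity and the antisymmetry relation depend on $k$, and the computation only closes up once these are tracked consistently. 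Conceptually there is nothing deep here: up to sign, this records that the extended adjoint action of Corollary \ref{ad} is a representation of $\g$ on $\Lambda^k\g$ restricting to $\Rho_{\g,k}$.

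Finally, I would note that this is the same $\g$-module structure already used implicitly earlier: it is the action entering the infinitesimal generator of $\Sigma_k$ in Proposition \ref{comp of Sigma} and defining the module $R_k$, so the proposition merely certifies that $(\Rho_{\g,k},\ \xi\cdot p=[p,\xi])$ is a bona fide $\g$-module. This is the algebraic input needed to phrase the morphism property of weak moment maps in terms of $\g$-module morphisms.
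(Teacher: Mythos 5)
Your well-definedness step coincides with the paper's \emph{entire} proof: the paper disposes of this proposition in one line, citing Proposition \ref{wedge is Schouten} to conclude that $[p,\xi]=\partial(p\wedge\xi)$ lies in $\mathrm{Im}(\partial_{k+1})\subseteq\ker\partial_k=\Rho_{\g,k}$, and it leaves bilinearity and the compatibility axiom entirely unstated. So your proposal subsumes the paper's argument and goes further by actually verifying the module axioms; that extra step is also where the one substantive issue sits.

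When you carry out the sign bookkeeping you flag, the computation does \emph{not} close for the action as literally stated. With the paper's conventions one has $[\xi,p]=-[p,\xi]=\mathrm{ad}_\xi(p)$ for $\xi\in\g$ and $p\in\Lambda^k\g$, and the graded Jacobi identity (equivalently, $\mathrm{ad}_{[\xi,\eta]}=\mathrm{ad}_\xi\mathrm{ad}_\eta-\mathrm{ad}_\eta\mathrm{ad}_\xi$ on $\Lambda^k\g$) gives \[ [p,[\xi,\eta]]=[[p,\xi],\eta]-[[p,\eta],\xi]. \] Hence, setting $\xi\cdot p:=[p,\xi]$, one finds $[\xi,\eta]\cdot p=-\bigl(\xi\cdot(\eta\cdot p)-\eta\cdot(\xi\cdot p)\bigr)$: the stated action is an anti-representation, i.e.\ a \emph{right} $\g$-module structure. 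The left-module axiom holds on the nose for $\xi\cdot p:=[\xi,p]=\mathrm{ad}_\xi(p)$, which is the convention the paper itself uses everywhere else (Corollary \ref{ad}, the module structure on $R_k$, Proposition \ref{comp of Sigma}, and the proof of Theorem \ref{morphism 1}). So your closing remark --- that the proposition just records that the extended adjoint action restricts to $\Rho_{\g,k}$ --- is the correct statement, and your proof should simply adopt $[\xi,p]$ as the action (or declare the module a right module); the sign slip is inherited from the paper's statement rather than being a flaw in your strategy. With that single adjustment your argument is complete, and it is strictly more thorough than the proof in the paper.
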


\begin{proof}
This follows since Proposition \ref{wedge is Schouten} shows that $[p,\xi]$ is in the Lie kernel.
\end{proof}

\begin{proposition}
For any $1\leq k \leq n$, we have that $\Omega^{n-k}_{\mathrm{Ham}}(M)$ is a $\g$-module under the action $\xi\cdot\alpha=\L_{V_\xi}\alpha$, where $\alpha\in\Omega^{n-k}_{\mathrm{Ham}}(M)$ and $\xi\in\g$.
\end{proposition}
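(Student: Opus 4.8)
The plan is to check the two requirements for $\Omega^{n-k}_{\mathrm{Ham}}(M)$ to be a $\g$-module under $\xi\cdot\alpha=\L_{V_\xi}\alpha$: first, that the action is well-defined, i.e.\ $\L_{V_\xi}\alpha$ is again Hamiltonian whenever $\alpha$ is; and second, that it is compatible with the Lie bracket, i.e.\ that $\L_{V_\xi}\L_{V_\eta}\alpha-\L_{V_\eta}\L_{V_\xi}\alpha$ agrees with the action of $[\xi,\eta]$ on $\alpha$. Bilinearity is immediate from the linearity of $\xi\mapsto V_\xi$ and of the Lie derivative, so the content lies in these two points.

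For well-definedness, suppose $\alpha\in\Omega^{n-k}_{\mathrm{Ham}}(M)$, so that $d\alpha=-X_\alpha\hk\omega$ for some $X_\alpha\in\Gamma(\Lambda^k(TM))$. Since $d$ commutes with the Lie derivative along the ordinary vector field $V_\xi$, I would first write $d(\L_{V_\xi}\alpha)=\L_{V_\xi}(d\alpha)=-\L_{V_\xi}(X_\alpha\hk\omega)$. The key step is then the derivation property of the Lie derivative over contraction, $\L_{V_\xi}(X_\alpha\hk\omega)=(\L_{V_\xi}X_\alpha)\hk\omega+X_\alpha\hk(\L_{V_\xi}\omega)$, which follows by applying the commutator relation for $\L_{V_\xi}$ against each interior product factor, the accumulated terms reproducing exactly the Schouten bracket $\L_{V_\xi}X_\alpha=[V_\xi,X_\alpha]$. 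Because the action is multisymplectic we have $\L_{V_\xi}\omega=0$, so the second term vanishes and
\[ d(\L_{V_\xi}\alpha)=-[V_\xi,X_\alpha]\hk\omega. \]
Hence $\L_{V_\xi}\alpha$ is Hamiltonian, with associated Hamiltonian multivector field $[V_\xi,X_\alpha]\in\Gamma(\Lambda^k(TM))$, and the action preserves $\Omega^{n-k}_{\mathrm{Ham}}(M)$.

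For bracket compatibility I would invoke the standard commutator identity $\L_{V_\xi}\L_{V_\eta}-\L_{V_\eta}\L_{V_\xi}=\L_{[V_\xi,V_\eta]}$ for Lie derivatives along vector fields, together with the fact that the infinitesimal generator map intertwines the bracket on $\g$ with that of vector fields (as in Proposition 4.1.26 of \cite{Marsden}, already used in the proof of Proposition \ref{adjoint over wedge}), so that $[V_\xi,V_\eta]$ is, up to the global sign fixed by this convention, $V_{[\xi,\eta]}$. Substituting this into the commutator yields precisely the compatibility of the action with the bracket on $\g$, which is the remaining module axiom; this is the same mechanism by which the preceding proposition makes $\Rho_{\g,k}$ a $\g$-module.

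The main obstacle is the well-definedness step, and specifically producing the identity $d(\L_{V_\xi}\alpha)=-[V_\xi,X_\alpha]\hk\omega$: this is where the multisymplectic hypothesis $\L_{V_\xi}\omega=0$ is essential, and one must be careful that the Lie derivative acts as a derivation on the multivector contraction so that the Schouten bracket $[V_\xi,X_\alpha]$ emerges as the new Hamiltonian multivector field. The bracket-compatibility axiom, by contrast, is routine once the sign convention for $[V_\xi,V_\eta]$ versus $V_{[\xi,\eta]}$ is pinned down.
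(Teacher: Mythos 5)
Your proof is correct, and its core step — showing $\L_{V_\xi}\alpha$ is again Hamiltonian by writing $d(\L_{V_\xi}\alpha)=\L_{V_\xi}(d\alpha)=-\L_{V_\xi}(X_\alpha\hk\omega)$, expanding with the derivation property of $\L_{V_\xi}$ over the contraction, and killing the $X_\alpha\hk\L_{V_\xi}\omega$ term by the multisymplectic hypothesis — is exactly the paper's proof of this proposition. The only differences are cosmetic: you additionally spell out the bracket-compatibility axiom (which the paper leaves implicit), and your sign $d(\L_{V_\xi}\alpha)=-[V_\xi,X_\alpha]\hk\omega$ is the one consistent with the convention $d\alpha=-X_\alpha\hk\omega$, whereas the paper's displayed computation drops a sign there, harmlessly since either way a Hamiltonian multivector field exists.
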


\begin{proof}
Suppose that $\alpha\in\Omega^{n-k}_{\mathrm{Ham}}(M)$ is a Hamiltonian $(n-k)$-form. Then $d\alpha=-X_\alpha\hk\omega$ for some $X_\alpha\in\Gamma(\Lambda^k(TM))$. Then, for $\xi\in\g$,

\begin{align*}
d\L_{V_\xi}\alpha&=-\L_{V_\xi}(X_\alpha\hk\omega)\\
&=-\L_{V_\xi}(X_\alpha\hk\omega)+X_\alpha\hk\L_{V_\xi}\omega &\text{since $\L_{V_\xi}\omega=0$}\\
&=[V_\xi,X_\alpha]\hk\omega &\text{by the product rule}
\end{align*}

Hence $\L_{V_\xi}\alpha$ is in $\Omega^{n-k}_{\mathrm{Ham}}(M)$.

\end{proof}

Our generalization of Proposition \ref{Morph 1} to multisymplectic geometry is:

\begin{theorem}\label{morphism 1}
For any $1\leq k \leq n$, the $k$-th component of a moment map $f_k$ is a $\g$-module morphism from $\Rho_{\g,k}$ to $\Omega^{n-k}_{\mathrm{Ham}}(M)/\mathrm{closed}$. Moreover, a weak $k$-moment map $f_k$ is equivariant if and only if it is a $\g$-module morphism from $\Rho_{\g,k}$ to $\Omega^{n-k}_{\mathrm{Ham}}(M)$.
\end{theorem}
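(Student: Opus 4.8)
The plan is to reduce both assertions to a single sign-careful computation built around the quantity
\[
\Theta_k(\xi,p) := f_k([p,\xi]) - \L_{V_\xi}f_k(p),
\]
which measures exactly the failure of $f_k$ to intertwine the two module structures. Since $f_k$ is $\R$-linear and both actions $\xi\cdot p=[p,\xi]$ and $\xi\cdot\alpha=\L_{V_\xi}\alpha$ are linear, $f_k$ is a genuine $\g$-module morphism into $\Omega^{n-k}_{\mathrm{Ham}}(M)$ precisely when $\Theta_k\equiv 0$, and a $\g$-module morphism into the quotient $\Omega^{n-k}_{\mathrm{Ham}}(M)/\mathrm{closed}$ precisely when $\Theta_k$ takes values in closed forms. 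So the whole theorem amounts to computing $\Theta_k$.

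First I would prove the unconditional (quotient) statement by showing $\Theta_k(\xi,p)$ is always closed. By Proposition \ref{wedge is Schouten} we have $[p,\xi]\in\Rho_{\g,k}$, so the defining equation (\ref{hcmm kernel}) gives $d f_k([p,\xi]) = -\zeta(k)\,V_{[p,\xi]}\hk\omega$. For the second term, using that $d$ commutes with $\L_{V_\xi}$ and that the action is multisymplectic, so $\L_{V_\xi}\omega=0$,
\[
d\,\L_{V_\xi}f_k(p)=\L_{V_\xi}\,df_k(p)=-\zeta(k)\,\L_{V_\xi}(V_p\hk\omega)=-\zeta(k)\,[V_\xi,V_p]\hk\omega,
\]
where the last equality uses $\L_{V_\xi}(V_p\hk\omega)=(\L_{V_\xi}V_p)\hk\omega+V_p\hk\L_{V_\xi}\omega$ with the second term vanishing. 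Invoking the identity $[V_\xi,V_p]=V_{[p,\xi]}$ (which follows from $V_{[p,q]}=-[V_p,V_q]$ together with the graded antisymmetry $[\xi,p]=-[p,\xi]$ of the Schouten bracket in this bidegree), the two right-hand sides agree, so $d\Theta_k(\xi,p)=0$. This gives the morphism $\Rho_{\g,k}\to\Omega^{n-k}_{\mathrm{Ham}}(M)/\mathrm{closed}$ for every weak $k$-moment map.

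For the equivalence I would connect $\Theta_k$ to the infinitesimal equivariance cocycle of Proposition \ref{comp of Sigma}, namely $\Sigma_k(\xi,p)=f_k([\xi,p])+\L_{V_\xi}f_k(p)$. Applying $[\xi,p]=-[p,\xi]$ shows $\Sigma_k(\xi,p)=-\Theta_k(\xi,p)$. Since for a connected group equivariance is equivalent to $\Sigma_k=0$ (as recorded after Proposition \ref{general inf}), equivariance holds if and only if $\Theta_k\equiv 0$, i.e.\ if and only if $f_k$ is a $\g$-module morphism into $\Omega^{n-k}_{\mathrm{Ham}}(M)$ itself, with no quotient. This yields both directions at once.

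The main obstacle I expect is purely bookkeeping of signs: the graded antisymmetry of the Schouten bracket, the minus sign in $V_{[p,q]}=-[V_p,V_q]$, and the $\zeta(k)$ factor must all be tracked consistently so that the two module-action conventions line up and $\Sigma_k$ comes out as exactly $-\Theta_k$; a single dropped sign would make the morphism condition correspond to equivariance only up to an unwanted twist. The geometric content — closedness, $\L_{V_\xi}$ commuting with $d$, and $\L_{V_\xi}\omega=0$ — is routine given the lemmas already established.
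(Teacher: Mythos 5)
Your proposal is correct and follows essentially the same route as the paper's proof: the first statement is obtained by applying $d$ to the difference and using the weak moment map equation, $d\L_{V_\xi}=\L_{V_\xi}d$, $\L_{V_\xi}\omega=0$, and the generator-bracket identity, while the equivalence with equivariance is reduced to the vanishing of $\Sigma_k$ via Proposition \ref{comp of Sigma}. If anything, your bookkeeping is tidier than the paper's: by working with $\Theta_k(\xi,p)=f_k([p,\xi])-\L_{V_\xi}f_k(p)$ and the convention $V_{[p,q]}=-[V_p,V_q]$, your signs are consistent with the module action $\xi\cdot p=[p,\xi]$ and with Proposition \ref{comp of Sigma}, whereas the paper's displayed computation asserts $V_{[\xi,p]}=[V_\xi,V_p]$ and $f_k([\xi,p])=\L_{V_\xi}f_k(p)$, which involve compensating sign slips under that same convention.
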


\begin{proof}
Suppose that $(f)$ is a weak moment map. Then, by definition
\begin{align*}
df_k([\xi,p])&=-\zeta(k)V_{[\xi,p]}\hk\omega\\
&=-\zeta(k)[V_\xi,V_p]\hk\omega\\
&=-\zeta(k)\L_{V_\xi}(V_p\hk\omega)\\
&=\zeta(k)\zeta(k)d\L_{V_\xi}f_k(p)\\
&=d\L_{V_\xi}f_k(p).
\end{align*}
 This proves the first statement of the theorem. Now suppose $f_k$ is equivariant. It follows that $\Sigma_k=0$. Thus, by Proposition \ref{comp of Sigma} we have $f_k([\xi,p])=\L_{V_\xi}f_k(p)$. Conversely, if $f_k$ is a $\g$-module morphism, that $f_k([\xi,p])=\L_{V_\xi}f_k(p)$ for every $\xi\in\g$ and $p\in\Rho_{\g,k}$. That is, $\Sigma_k=0$. 

\end{proof}
\section{Open Questions}
\del{
\subsection{Symplectic Geometry}
\begin{proposition}
If $H^1(\g)=0=H^2(\g)$ then a unique equivariant moment map exists.
\end{proposition}

\subsection{Multisymplectic Geometry}

\begin{theorem}
If $H^0(g,\Rho_{\g,k}^\ast\otimes\Omega^{n-k}_{\text{cl}})=0$ and $H^1(g,\Rho_{\g,k}^\ast\otimes\Omega^{n-k}_{\text{cl}})=0$, then there exists a unique equivariant weak homotopy $k$-moment map.
\end{theorem}

\begin{theorem}
If $H^0(g,\Rho_{\g,k}^\ast\otimes\Omega^{n-k}_{\text{cl}})=0$ and $H^1(g,\Rho_{\g,k}^\ast\otimes\Omega^{n-k}_{\text{cl}})=0$ for all $1\leq k\leq n$, then a full equivariant weak moment map exists uniquely.
\end{theorem}

\begin{theorem}
If $H^k(\g)=0$ then a 
\end{theorem}

}
We end by noting some open questions naturally posed by the results in this paper.

\begin{enumerate}\del{
\item Fix $n\geq 1$. In light of Theorem \ref{theorem open question 1}, it is natural to ask whether there exists a Lie algebra $\g$ such that $H^k(\g)=0$ for all $1\leq k \leq n$?}
\item Consider Theorems \ref{morphism 1}. In symplectic geometry, Proposition \ref{inf cocycle} shows that a moment map $f:\g\to C^\infty(M)$ induces a Lie algebra morphism from $(\g,[\cdot,\cdot])$ to the quotient space $(C^\infty(M)/\mathrm{constant},\{\cdot,\cdot\})$, and if $f$ is equivariant then it is a Lie algebra morphism from $(\g,[\cdot,\cdot])$ to $(C^\infty(M)/\mathrm{exact},\{\cdot,\cdot\})$. Moreover, in \cite{me},
Proposition 4.10 showed that both $\Omega^\bullet_{\mathrm{Ham}}(M)/\mathrm{closed}$ and $\Omega^\bullet_{\mathrm{Ham}}(M)/\mathrm{exact}$ are graded Lie algebras while Proposition 5.9 of \cite{me} showed that a weak homotopy moment map is always a graded Lie algebra morphism from $\Rho_\g$ to $\Omega^\bullet_{\mathrm{Ham}}(M)/closed$. 

\hspace{0.2cm} Hence, a natural question is:

If $(f)$ is an equivariant weak moment map, does it induce a graded Lie algebra morphism from $(\Rho_\g,[\cdot,\cdot])$ to $(\Omega^\bullet_{\mathrm{Ham}}(M)/\mathrm{exact},\{\cdot,\cdot\})$? Conversely?

\item In  our work, we provided a couple of examples of $n$-plectic group actions to which our theory of the existence and uniqueness of moment maps could be applied. There are many other interesting $n$-plectic geometries; see for example \cite{questions}, \cite{me} and \cite{cq}. What does the work done in our paper say about the existence and uniqueness of moment maps in these setups?

\item Given a weak moment map $(f)$ with $f_k:\Rho_{\g,k}\to\Omega^{n-k}_{\mathrm{Ham}}(M)$, does there exists a full homotopy moment map $(h)$ whose restriction to the Lie kernel is $(f)$? Something about equivariant cohomology. In particular, what is the relationship between the results on the existence and uniqueness of homotopy moment maps given in \cite{existence 1} and \cite{existence 2} to the results in this paper?

\end{enumerate}


\begin{thebibliography}{99}
\vspace{0.2cm}

\bibitem{Marsden} R. Abraham and J. E. Marsden.
{\em Foundations of Mechanics, 2nd edition, } Reading, Massachusetts: Benjamin/Cummings Publishing Company, 1978.

\bibitem{Da Silva} A. Cannas da Silva
{\em Lectures on Symplectic Geometry, } volume 1764 of Lectures Notes in Mathematics. Springer-Verlag, Berlin, 2001.

\bibitem{questions} M. Callies, Y. Fregier, C. L. Rogers, and M. Zambon.
{\em Homotopy moment  maps, } Advances in Mathematics 303 (2016): 954-1043.

\bibitem{existence 2} Y. Fr\'egier,  C. Laurent-Gengoux, and M. Zambon. 
{\em A cohomological framework for homotopy moment maps, } Journal of Geometry and Physics 97 (2015): 119-132.

\bibitem{me} J. Herman.
{\em Noether's Theorem in Multisymplectic Geometry} Differ. Geom. Appl. (2017), https://doi.org/10.1016/j.difgeo.2017.09.003.

\bibitem{ms}T. B. Madsen and A. Swann. 
{\em Multi-moment maps, } Advances in Mathematics 229.4 (2012): 2287-2309.

\bibitem{MS} T. B. Madsen and A. Swann.
{\em Closed forms and multi-moment maps, } Geometriae Dedicata (2013): 1-28.

\bibitem{rogers} C. L. Rogers.
{\em  L-infinity algebras from multisymplectic geometry, } Lett. Math. Phys., 100(1):29-50, 2012.

\bibitem{existence 1} L. Ryvkin, and T. Wurzbacher.
{\em Existence and unicity of moments in multisymplectic geometry, } Differential Geometry and its Applications 41 (2015): 1-11.

\bibitem{cq} L. Ryvkin, T. Wurzbacher, and M. Zambon.
{\em Conserved quantities on multisymplectic manifolds, } arXiv preprint arXiv:1610.05592 (2016).

\bibitem{kunneth} Charles A. Weibel.
{\em An introduction to homological algebra,} volume 38 of Cambridge Studies in Advanced Mathematics, 1994.





\end{thebibliography}
\end{document}